\theoremstyle{definition}
\newtheorem{theorem}{Theorem}[subsection]
\newtheorem{theoremz}{Theorem}             
\newtheorem{corollary}[theorem]{Corollary}
\newtheorem{definition}[theorem]{Definition}
\newtheorem{example}[theorem]{Example}
\newtheorem{lemma}[theorem]{Lemma}
\newtheorem{proposition}[theorem]{Proposition}
\newtheorem{remark}[theorem]{Remark}
\numberwithin{equation}{section}
\newcommand{\toto}{\rightrightarrows}
\newcommand{\xto}{\xrightarrow}
\newcommand{\from}{\leftarrow}
\newcommand{\xfrom}{\xleftarrow}
\def\<{\langle}
\def\>{\rangle}
\newcommand{\action}{\curvearrowright}
\newcommand{\raction}{\curvearrowleft}
\newcommand{\R}{\mathbb R}
\newcommand{\N}{\mathbb N}
\newcommand{\Z}{\mathbb Z}
\newcommand{\id}{{\rm id}}
\renewcommand{\d}{{\rm d}}
\newcommand{\G}[1]{G^{(#1)}}
\newcommand{\e}[1]{\eta^{(#1)}}
\newcommand{\al}{\alpha}
\newcommand{\be}{\beta}
\newcommand{\F}{\mathcal F}
\DeclareMathOperator{\supp}{supp}    
\DeclareMathOperator{\Hol}{Hol}         
\DeclareMathOperator{\Mon}{Mon}         
\DeclareMathOperator{\Lie}{Lie}         
\begin{document}

\title{Riemannian metrics on Lie groupoids}

\author{Matias del Hoyo}
\address{Instituto de Matem\'atica Pura e Aplicada, Estrada Dona Castorina 110, Rio de Janeiro, 22460-320, Brazil}
\email{mdelhoyo@impa.br}
\author{Rui Loja Fernandes}
\address{Department of Mathematics, University of Illinois at Urbana-Champaign, 1409 W. †Green Street, Urbana, IL 61801, USA} 
\email{ruiloja@illinois.edu}

\thanks{MdH was partially supported by the ERC Starting Grant No. 279729. RLF was partially supported by NSF grant DMS 1308472 and FCT/Portugal. Both authors acknowledge the support of the \emph{Ci\^encias Sem Fronteiras} grant 401817/2013-0.}

\begin{abstract} 
We introduce a notion of metric on a Lie groupoid, compatible with multiplication, and we study its properties. We show that many families of Lie groupoids admit such metrics, including the important class of proper Lie groupoids. The exponential map of these metrics allow us to establish a Linearization Theorem for Riemannian groupoids, obtaining both a simpler proof and a stronger version of the Weinstein-Zung Linearization Theorem for proper Lie groupoids. This new notion of metric has a simplicial nature which will be explored in future papers of this series.
\end{abstract}

\maketitle

\tableofcontents


\section{Introduction}

This is the first of a series of papers dedicated to the study of Riemannian metrics on Lie groupoids. There have been several attempts to propose a good notion for metrics on Lie groupoids (see, e.g., \cite{gghr,glick,hepworth,ppt}), but to our knowledge, all of them fail to take into account the groupoid multiplication.
The compatibility with the multiplication may not be required in certain particular cases, such as \'etale groupoids, describing orbifolds. But for general Lie groupoids, describing more general singular spaces, taking into account the groupoid multiplication becomes crucial. 
In this paper we introduce a condition of compatibility between the metric and the groupoid multiplication, which is automatically satisfied for \'etale groupoids, but yields a novel concept for more general Lie groupoids.

\smallskip

The basic idea underlying our approach can be easily described using the simplicial model of the Lie groupoid $G\toto M$, say its {\em nerve} $NG$:
$$\xymatrix@1{ \dots \ar@<0.60pc>[r]\ar@<0.30pc>[r] \ar[r] \ar@<-0.30pc>[r]\ar@<-0.60pc>[r]& \G{n}  \ar@<0.45pc>[r]\ar@<0.15pc>[r]\ar@<-0.15pc>[r]\ar@<-0.45pc>[r] &\cdots  \ar@<0.45pc>[r]\ar@<0.15pc>[r]\ar@<-0.15pc>[r]\ar@<-0.45pc>[r]&\G2 \ar@<0.30pc>[r] \ar[r] \ar@<-0.30pc>[r]& \G1 \ar@<0.2pc>[r]\ar@<-0.2pc>[r] & \G0[\ar[r]& \G0/\G1]}.$$

\smallskip

The manifold $\G{n}$ consists of chains of $n$ composable arrows, or equivalently, commutative $n$-simplices on $G$. This viewpoint clarifies the definition of the face maps $\G{n}\to\G{n-1}$, as well as the existence of an action of the symmetric group $S_{n+1}\action\G{n}$ by permuting the vertices of a simplex.
We define an \textbf{$n$-metric on the Lie groupoid $G\toto M$} to be a Riemannian metric $\e{n}$ on $\G{n}$ invariant under $S_{n+1}$ and for which the fibers of one (and therefore every) face map $\G{n}\to\G{n-1}$ are locally equidistant. For small values of $n$ one finds:



\begin{enumerate}[(i)]
\item If $n=0$ we recover the notion of a \emph{transversely invariant metric} on the space of units $M=\G0$, i.e., a metric transversely invariant under the action of $G$ on its units. Such metrics and their properties have been studied in \cite{ppt}.
\item If $n=1$ we recover the notion of Riemannian groupoid introduced first in \cite{gghr}. This is a metric in the space arrows $\G1=G$ invariant under inversion and for which the source and target fibers are equidistant.
\item If $n=2$ we obtain a new notion of metric on a Lie groupoid, a metric in the space of composable arrows $\G2$ which is $S_3$-invariant and transverse to the  multiplication map. We will stress the advantages of this new concept.
\end{enumerate}

\smallskip

An $n$-metric on $\G{n}$ induces a $k$-metric on each $\G{k}$ for $0\le k<n$, for which all face maps $\G{k+1}\to\G{k}$ are Riemannian submersions. Since the nerve of a Lie groupoid is completely determined once one reaches $\G2$, it is enough for many purposes to consider only 2-metrics. We study 2-metrics in this paper and reserve a discussion on general $n$-metrics for the next papers in this series \cite{fdh1,fdh2}. Still, it maybe worth to mention here the following fact, to be proved in \cite{fdh1}: there is at most one 3-metric inducing a given 2-metric in $G\toto M$, and if such a 3-metric exists, it has a unique extension to an $n$-metric for every $n$.
On the other hand, we will give later examples of groupoids which admit an $n$-metric, but do not admit an $n+1$-metric, for $n=0,1,2$. Also, uniqueness fails in these low degrees, so one can have, e.g., two different 2-metrics on $\G2$ inducing the same metric on $\G1$.

\smallskip

A 2-metric $\e2$ on a Lie groupoid $G\toto M$ can be used to great profit. We will call the pair $(G\toto M,\e2)$ 
a \textbf{Riemannian groupoid}. 
As we will see later, there are plenty of classes of groupoids admitting 2-metrics: Lie groups, \'etale groupoids, transitive Lie groupoids, locally trivial Lie group bundles, etc. More important, our first main result shows that:

\begin{theoremz}
\label{thmz:main:1}
Any Hausdorff proper Lie groupoid $G\toto M$ admits a 2-metric.
\end{theoremz}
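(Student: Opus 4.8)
The plan is to reduce the statement to an averaging problem and to solve it with the two devices that properness makes available: averaging over the finite symmetry group $S_3$, and averaging over the groupoid itself by means of a Haar system together with a cutoff function. By the definition of a $2$-metric recalled above, and since the metric is anyway required to be $S_3$-invariant, it suffices to produce an $S_3$-invariant metric $\e2$ on $\G2$ whose fibers are locally equidistant for \emph{one} face map $d_0\colon\G2\to\G1$; invariance under $S_3$ then forces the same property for the other two faces, because $S_3$ permutes $d_0,d_1,d_2$. Thus the real target is a single metric on $\G2$ satisfying one transversality condition and one finite-group symmetry condition.

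First I would set up the relevant proper actions. For each $i$ the fibers of the face map $d_i$ are the orbits of a natural action of $G$ on $\G2$ that moves the $i$-th vertex of the simplex: for instance $g\cdot(g_1,g_2)=(gg_1,g_2)$ (defined when $s(g)=t(g_1)$) has as orbits exactly the fibers of the face forgetting the first vertex, and analogous left/right multiplications realize the fibers of the other two faces. Each associated action groupoid $G\ltimes\G2\toto\G2$ is proper, since $G$ is (given compact sets of source and target in $\G2$, the multiplying arrow $g$ is forced into a compact set by properness of $G$). Hence the foliation of $\G2$ by $d_0$-fibers is the orbit foliation of a proper groupoid, and therefore admits a transversely invariant metric. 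This is precisely the $n=0$ construction (as in \cite{ppt}): start from an arbitrary metric on $\G2$ and average it with a Haar system weighted by a cutoff function. Carrying this out produces a metric on $\G2$ whose $d_0$-fibers are locally equidistant.

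The main obstacle is to reconcile this transversality with $S_3$-invariance. Naively symmetrizing a $d_0$-transverse metric over $S_3$ destroys the property: the six pullbacks are transverse to the three distinct faces (two each), and a convex combination of metrics transverse to distinct faces need not be transverse to any of them, since the horizontal distribution depends nonlinearly on the metric. I would therefore perform the groupoid averaging so that the averaging operator commutes with the $S_3$-action, by choosing the auxiliary data (Haar system and cutoff) for the three commuting vertex actions in an $S_3$-symmetric fashion and carrying out a single symmetric averaging rather than three independent ones. Equivalently, one can argue locally: both conditions are local over the orbit space $\G1/G$, which for a proper groupoid is Hausdorff and locally compact and admits partitions of unity, so it would be enough to build $S_3$-invariant transverse metrics on saturated pieces and glue them, provided the gluing stays inside the intersection of the two conditions.

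The delicate point, and the step I expect to demand the most care, is exactly this compatibility: arranging the averaging (or the gluing) to land simultaneously in the $S_3$-invariant metrics and in the face-transverse metrics, rather than in one or the other. Once that intersection is reached, the stated equivalence ``transverse to one face $\Rightarrow$ transverse to every face'' (valid under $S_3$-invariance) promotes the resulting metric to a genuine $2$-metric, and the theorem follows.
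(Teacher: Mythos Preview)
Your proposal correctly identifies the two ingredients---$S_3$-symmetry and face-transversality---and correctly diagnoses the obstruction: symmetrizing a metric that is transverse to one face over $S_3$ need not yield a metric transverse to any face. But the proposal does not actually close this gap. The phrases ``carry out a single symmetric averaging'' and ``choose the Haar data $S_3$-symmetrically'' are not backed by a construction: the three $G$-actions $\theta^1,\theta^2,\theta^3$ on $\G2$ have three \emph{different} moment maps, so there is no single groupoid-averaging on $\G2$ whose output is simultaneously transverse to all three faces. Averaging for $\theta^1$ only guarantees transversality to $\pi_2$, and conjugating by an element of $S_3$ trades this for transversality to a different face; a cotangent average of these three (or six) metrics is again only guaranteed to be transverse to a map they are \emph{all} already transverse to, which here is no face at all. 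The local-gluing suggestion has the same defect, since the incompatibility is already present on each saturated piece. So as written the argument stops exactly at the point you flag as ``the delicate step''.

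The paper resolves this not by a clever averaging on $\G2$, but by changing the space. One passes to $G^{[3]}=\{(h_1,h_2,h_3):s(h_1)=s(h_2)=s(h_3)\}$, the total space of the principal $G$-bundle $\pi^{(2)}:G^{[3]}\to\G2$, $(h_1,h_2,h_3)\mapsto(h_1h_2^{-1},h_2h_3^{-1})$. On $G^{[3]}$ the two requirements decouple: the pullback metric coming from any $s$-transverse metric on $G$ is \emph{automatically} $S_3$-invariant (permutation of factors), and there is a \emph{single} right $G$-action (diagonal right multiplication) to average against. One then takes the cotangent average---not a tangent average, which would fail for the reason illustrated in Example~\ref{cotangent.principle}---with respect to this single action; Proposition~\ref{prop:cot.ave.&.submersions} ensures the face maps of the submersion groupoid $G^{[3]}\rightrightarrows G^{[2]}\to G$ remain Riemannian submersions after averaging. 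Pushing the averaged metric forward along $\pi^{(2)}$ then produces the $2$-metric on $\G2$, with $S_3$-invariance and face-transversality inherited for free from the equivariance of $\pi^{(2)}$. The missing idea in your approach is precisely this passage to $G^{[3]}$ (the ``gauge trick''), which replaces the incompatible triple of averagings on $\G2$ by one averaging upstairs.
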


In fact, a Hausdorff proper groupoid admits an $n$-metric, for any $n$, but we choose to ignore this for now, since our focus in this paper is on 2-metrics.

Our first major application of the existence of 2-metrics is to the \emph{linearization problem} for Lie groupoids. Given a Lie groupoid $G\toto M$ and an embedded saturated submanifold $S\subset M$ (i.e. $S$ is a collection of orbits of $G$), one has a \emph{linear local model} for $G$ around $S$: one can show that the normal bundle to the restriction $G_S\toto S$ is a Lie groupoid $\nu(G_S)\toto\nu(S)$, which only depends on the first jet of $G_S$. Then the linearization problem asks if there exists a groupoid isomorphism from a neighborhood of $G_S$ in $G$ to a neighborhood of $G_S$ in $\nu(G_S)$. 
Special cases of this problem, where the answer is affirmative, include some classical results in differential geometry, such as:
\begin{itemize}
\item Ehresmann's Theorem for submersions,
\item Local Reeb stability for foliations,
\item The Bochner Linearization Theorem and, more generally, the Tube Theorem for proper Lie group actions.
\end{itemize}
In recent years, this problem has been intensively studied by several authors focusing on the case of proper groupoids \cite{cs,ppt,wein1,wein2,zung}. Using 2-metrics and their exponential maps, we will give a simple, geometric proof of the following much more general result:

\begin{theoremz}
\label{thmz:main:2}
Let $(G\toto M,\e2)$ be a Riemannian Lie groupoid and let $S\subset M$ be a saturated embedded submanifold. Then the exponential map defines a linearization of $G$ around $S$.
\end{theoremz}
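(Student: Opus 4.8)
The plan is to use the 2-metric's exponential map to produce a groupoid morphism from a neighborhood of $G_S$ in the linear model $\nu(G_S)$ to $G$, and then show it is a diffeomorphism onto a neighborhood. The key insight is that a $2$-metric $\e2$ induces, by the results quoted earlier in the introduction, a compatible $1$-metric $\e1$ on $\G1 = G$ and a $0$-metric $\eta^{(0)}$ on $M = \G0$, for which the face maps are Riemannian submersions; these are precisely the structures that make the various exponential maps intertwine with the groupoid structure maps.

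First I would set up the linear model carefully. Since $S$ is saturated and embedded, the restriction $G_S \toto S$ is a Lie subgroupoid, and one checks that the normal bundles assemble into a Lie groupoid $\nu(G_S) \toto \nu(S)$ whose structure maps are the normal derivatives of the structure maps of $G$. I would then invoke the $2$-metric to get Riemannian exponential maps $\exp$ on $\G2$, $\G1$, and $\G0$, restricted to the normal directions along $G_S$, $S$. Because the face maps are Riemannian submersions for the induced metrics and because the metric is invariant under the $S_{n+1}$-action, the exponential map of the total space commutes with the exponentials of the base: this is the crucial \emph{naturality} of the exponential under a Riemannian submersion, namely that $\exp$ carries horizontal geodesics to geodesics downstairs. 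Concretely, I would define the candidate linearization $\phi\colon \nu(G_S) \supset \U \to G$ fiberwise as the normal exponential of $\e1$, and simultaneously define the corresponding maps on units via $\eta^{(0)}$ and on composable pairs via $\e2$.

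The heart of the argument is then to verify that $\phi$ is a groupoid morphism, i.e.\ that it intertwines source, target, inversion, and multiplication. Source- and target-compatibility follow because the exponential of $\e1$ covers the exponential of $\eta^{(0)}$ under the submersions $s,t$ (again by naturality of $\exp$ along a Riemannian submersion). Inversion-compatibility follows from the invariance of $\e1$ under the inversion map. The delicate point is \emph{multiplicativity}: one must show $\phi$ respects composition of arrows. This is exactly where the $2$-metric enters essentially rather than just its induced $1$-metric: the multiplication map $\G2 \to \G1$ is a face map, hence a Riemannian submersion, and the $S_3$-invariance of $\e2$ makes all three face maps $\G2 \to \G1$ (the two projections and the multiplication) play symmetric roles. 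I expect the main obstacle to be precisely this step --- proving that the normal exponential maps on $\G2$ and $\G1$ commute with the multiplication face map --- since it requires knowing that the geodesic realizing $\phi$ on a composable pair projects, under all three face maps simultaneously, to the geodesics realizing $\phi$ on the factors and on the product. The resolution should come from the compatibility built into the definition of a $2$-metric: the fibers of the face maps are locally equidistant and the metric is symmetric, so a horizontal geodesic in $\G2$ descends to horizontal geodesics under every face map at once.

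Finally, to conclude that $\phi$ is a genuine linearization one restricts to a small enough saturated neighborhood of the zero section. Since $\phi$ restricts to the identity on $G_S$ and its normal derivative along $G_S$ is the identity (the exponential map has identity derivative at the zero section), $\phi$ is a local diffeomorphism near $G_S$ by the inverse function theorem; shrinking to an invariant neighborhood --- which exists because the $\eta^{(0)}$-distance to $S$ gives a $G$-invariant tubular function --- yields a groupoid isomorphism onto a neighborhood of $G_S$ in $G$. I would note that, compared with the Weinstein--Zung approach, the gain is that everything is manufactured from a single geometric object, the $2$-metric, so the morphism property is essentially automatic from Riemannian naturality rather than from an averaging or cohomological vanishing argument; properness is only used, via Theorem~\ref{thmz:main:1}, to guarantee that such a $2$-metric exists in the first place.
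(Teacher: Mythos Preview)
Your overall strategy matches the paper's: define the linearization via the normal exponential maps of $\e0$, $\e1$, $\e2$, and exploit that all face maps $\pi_1,\pi_2,m,s,t$ are Riemannian submersions so that horizontal geodesics descend. The identification of multiplicativity as the delicate point, and its resolution via the $S_3$-symmetry of $\e2$ making all three face maps $\G2\to\G1$ play the same role, is exactly what the paper does: it shows concretely that the $\e2$-geodesic with initial velocity $(\tilde w,\tilde v)\in\nu(G_S^{(2)})$ equals $(\gamma_{\tilde w}(t),\gamma_{\tilde v}(t))$, and then that $m(\gamma_{\tilde w}(t),\gamma_{\tilde v}(t))$ is the $\e1$-geodesic with initial velocity $\d m(\tilde w,\tilde v)$.

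The gap is in your last paragraph, and it matters. First, what the exponential produces here is only a \emph{weak} linearization: groupoid neighborhoods $\tilde V\toto V$ in the linear model and in $G$, not full and certainly not saturated ones. Your claim that ``the $\eta^{(0)}$-distance to $S$ gives a $G$-invariant tubular function'' and hence an invariant neighborhood is not justified; a $0$-metric is only \emph{transversely} invariant, and for a general (non-proper) Riemannian groupoid there is no reason for saturated tubular neighborhoods of $S$ to exist. Upgrading to full or invariant linearization requires properness or $s$-properness respectively, and is handled separately in the paper. Second, your appeal to the inverse function theorem does not by itself give injectivity of $\exp$ on a neighborhood of the (typically non-compact) zero section, nor does ``shrinking'' automatically produce a set closed under multiplication. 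The paper handles both issues at once: it fixes an admissible open $V\subset\nu(S)$ for $\e0$, defines $\tilde V$ as those normal vectors in $\nu(G_S)$ lying in the domain of $\exp^{(1)}$ with $\d s,\d t$ landing in $V$, proves directly that $\tilde V$ is closed under multiplication (this is where the geodesic computation above is used, to show $\d m(\tilde w,\tilde v)$ again lies in the domain of $\exp^{(1)}$), and then invokes the lemma that admissibility \emph{lifts} along a Riemannian submersion (Proposition~\ref{prop:commuting.exponential}) to conclude that $\tilde V$ and $\tilde V\times_V\tilde V$ are admissible. You should replace the inverse-function-theorem-plus-shrinking step with this construction.
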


This theorem yields all known linearization theorems for Lie groupoids. In particular, from Theorems \ref{thmz:main:1} and \ref{thmz:main:2} one can easily obtain the linearization theorem for proper Lie groupoids, where the groupoid neighborhood can be taken to be a full groupoid neighborhood, and the invariant linearization theorem for source proper Lie groupoids, where the groupoid neighborhood can be taken to be saturated (see Section \ref{subsec:linearization} for details).

Summarizing, our metric approach to the linearization problem has the following advantages over the approaches one can find in the literature: (i) it is valid for any Riemannian groupoid, which include proper groupoids but also many other classes, enlarging the range of application, (ii) it holds around any saturated submanifold $S$ and not only around orbits $O$, (iii) it gives some control over the linearization map, and (iv) it has a simple conceptual proof that enlight the subject considerably.

The fact that the linearization is valid around any saturated submanifold $S$, instead of just an orbit $O$, has a nice interpretation from a \emph{stacky} viewpoint: we will prove in \cite{fdh2} that our notion of metric is Morita invariant, hence it leads to a notion of metrics on smooth stacks, an essential tool in the study of their geometry. From this perspective, our linearization around an orbit $O$ yields normal coordinates around a point in the stack, while linearization around a saturated set $S$ yields an exponential tubular neighborhood around a substack.
We will develop these and other aspects of metrics on Lie groupoids in the forthcoming papers \cite{fdh1,fdh2}.

\medskip

\noindent{\bf  Acknowledgments. }
We thank IST-Lisbon, IMPA and U. Utrecht for hosting us at several stages of this project.
We also thank H. Bursztyn, M. Crainic, I. Marcut, D. Martinez-Torres, H. Posthuma and I. Struchiner for many fruitful discussions.

\section{Transversely invariant metrics}

In this section we discuss metrics which are invariant under groupoid actions which are relevant for our study of metrics on groupoids. Since a groupoid action \emph{is not by diffeomorphisms} of the space where the groupoid acts, this notion has some subtleties that one needs to address. 



\subsection{Background on Riemannian submersions}



Given $E$, $B$ smooth manifolds endowed with metrics $\eta^E$, $\eta^B$, a {\bf Riemannian submersion} $p:E\to B$ is a submersion whose fibers are equidistant. This condition amounts to requiring that for every point $e\in E$ the map $\d_ep:T_eF^\bot\to T_bB$ is an isometry, where $b=p(e)$ and $T_eF^\bot$ denotes the subspace of vectors orthogonal to the fiber $F=p^{-1}(b)$.

Given an inner product $\eta$ on a finite dimensional vector space $V$, we denote by $\eta^*$ the {\bf dual inner product} on the dual space $V^*$, which results from the identification $V\cong V^*$ defined by $v\mapsto \eta(v,\cdot)$. In terms of the dual inner products, the condition for a Riemannian submersion can be rephrase as follows: for all $e\in E$ the map $(\d_ep)^*:T_b^*B\to T_eF^\circ$ is an isometry, where $T_eF^\circ$ denotes the annihilator of the vectors tangent to the fiber.


For a Riemannian submersion $p:E\to B$ the metric on $B$ is completely determined by the metric on $E$. In fact, given any submersion $p:E\to B$  and $\eta^E$ a metric on $E$, let us call $\eta^E$ {\bf $p$-transverse} if for all $e,e'$ belonging to the same fiber $F$ the composition 
$T_eF^\circ \from T_b^*B \to T_{e'}F^\circ$
is an isometry. Of course, this can also be rephrased in terms of the orthogonal spaces to the fibers.

When $\eta^E$ is $p$-transverse we can endow $B$ with a {\bf push-forward metric} $p_*\eta^E$, defined as the unique metric $\eta^B$ which makes the maps
$T_b^*B \to T_{e}F^\circ$ isometries. This gives a smooth well-defined metric on $B$, and hence we can deconstruct the notion of Riemannian submersion as follows:
$$p:E\to B \text{ is Riemannian}
 \quad\iff\quad \begin{cases} \eta^E \text{ is $p$-transverse, and}\\p_*\eta^E=\eta^B.\end{cases}$$
 
Just like any manifold can be made into a Riemannian manifold, {\it every submersion can be made Riemannian}:

\begin{lemma}\label{every.submersion.is.Riemannian}
For any submersion $p:E\to B$ there exists a $p$-transverse metric $\eta^E$ on $E$.
\end{lemma}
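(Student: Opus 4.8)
The plan is to engineer the transversality condition directly, by letting the part of the metric along the directions transverse to the fibers descend from a single, fixed metric on the base. An arbitrary metric on $E$ will generally fail to be $p$-transverse, because the inner product it induces on $T_bB$ via $\d_ep\colon T_eF^\bot\to T_bB$ varies as $e$ ranges over the fiber; the whole point is to remove this dependence by construction.

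First I would fix, once and for all, a Riemannian metric $\eta^B$ on $B$, which exists by the usual partition of unity argument. Since $p$ is a submersion, the vertical subbundle $\ker\d p\subset TE$ has constant rank, hence is a genuine subbundle, and so admits a complement: choosing any auxiliary metric on $E$ and taking orthogonal complements produces a smooth horizontal distribution $H$ with $TE=H\oplus\ker\d p$. For each $e\in E$ the restriction $\d_ep\colon H_e\to T_{p(e)}B$ is then a linear isomorphism, its kernel being $H_e\cap\ker\d_ep=0$ and its image all of $T_{p(e)}B$ by a dimension count.

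Next I would define $\eta^E$ by three requirements: that $H$ and $\ker\d p$ be $\eta^E$-orthogonal; that $\eta^E$ restrict to $\ker\d p$ as an arbitrary auxiliary fiber metric on the vertical subbundle; and that $\eta^E$ restrict to each $H_e$ as the pullback of the fixed base metric under $\d_ep\colon H_e\to T_{p(e)}B$. These prescriptions glue to a smooth metric on $E$. By construction the $\eta^E$-orthogonal complement $T_eF^\bot$ of the fiber $F=p^{-1}(b)$ through $e$ is exactly $H_e$, and $\d_ep\colon T_eF^\bot\to(T_bB,\eta^B)$ is an isometry.

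Finally, to check $p$-transversality, I note that for any $e,e'$ in the same fiber over $b$ both $\d_ep$ and $\d_{e'}p$ identify $T_eF^\bot$ and $T_{e'}F^\bot$ isometrically with the single space $(T_bB,\eta^B)$; hence the comparison map $T_eF^\bot\from T_bB\to T_{e'}F^\bot$, equivalently its annihilator version $T_eF^\circ\from T_b^*B\to T_{e'}F^\circ$, is an isometry, which is precisely the $p$-transversality condition. This in fact shows more, namely $p_*\eta^E=\eta^B$, so $p$ is even a Riemannian submersion. The only genuinely essential idea is the conceptual one noted at the outset; existence of the complement and smoothness of the resulting metric are routine, so I do not expect a serious obstacle here.
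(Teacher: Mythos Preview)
Your proof is correct and follows essentially the same approach as the paper: choose an Ehresmann connection, declare it orthogonal to the fibers, equip the horizontal part with the pullback of a fixed metric on $B$, and put an arbitrary metric on the vertical part. The paper's proof is simply a one-sentence sketch of the construction you have spelled out in detail.
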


\begin{proof}
It is enough to set an Ehresmann connection, declare it orthogonal to the fibers, and construct the metric on the fibers arbitrary and on the connection as a pullback of a fixed one in $B$.
\end{proof}

 


The correspondence $\eta\mapsto \eta^*$ between metrics on $V$ and $V^*$ is not linear, and this makes the two points of view on Riemannian submersions, via the annihilators of the fibers or via the normal spaces to the fibers, somewhat different. In fact, we advocate that it is more advantageous to take the cotangent space point of view in the study of Riemannian submersions, as illustrated by the following proposition:

\begin{proposition}
Let $p:E\to B$  be a submersion and let $\{\eta_1,\dots,\eta_k\}$ be $p$-transverse metrics. Then their {\bf cotangent average} $\eta$ defined by
$$ \eta:=\frac{1}{k}\left(\sum_{i=1}^k \eta_i^*\right)^*, $$
is also $p$-transverse.
\end{proposition}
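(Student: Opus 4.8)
The plan is to reduce the statement to a pointwise, linear–algebraic fact by passing to dual metrics, exploiting exactly the advantage of the cotangent viewpoint advertised above. First I would check that $\eta$ is a well-defined Riemannian metric: on each cotangent space $T_e^*E$ the sum $\sum_{i=1}^k\eta_i^*$ of inner products is again an inner product, its dual is an inner product on $T_eE$, and multiplication by the positive constant $1/k$ preserves positive–definiteness; smoothness is automatic since all these operations are algebraic and depend smoothly on $e$.

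The key step is to compute the dual metric $\eta^*$. Using that $\mu\mapsto\mu^*$ is an involution and that $(c\,\mu)^*=c^{-1}\mu^*$ for every positive constant $c$, I obtain
$$\eta^*=\Bigl(\tfrac1k\bigl(\textstyle\sum_{i=1}^k\eta_i^*\bigr)^*\Bigr)^*=k\sum_{i=1}^k\eta_i^*.$$
Thus, although the cotangent average is a genuinely nonlinear construction on metrics, its dual is simply a constant positive multiple of the ordinary sum of the duals $\eta_i^*$, and this identity holds pointwise on all of $T_e^*E$.

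Now I would unwind $p$-transversality in the cotangent picture. For $e$ in the fiber $F=p^{-1}(b)$ the annihilator $T_eF^\circ=\im(\d_ep)^*$ and the injection $(\d_ep)^*\colon T_b^*B\to T_eF^\circ$ are purely linear data, independent of any metric — this is precisely what makes the argument clean. By definition, $\eta$ is $p$-transverse iff, for $e,e'$ in the same fiber, the bilinear form $(\xi,\zeta)\mapsto \eta^*|_e\bigl((\d_ep)^*\xi,(\d_ep)^*\zeta\bigr)$ on $T_b^*B$ is independent of the chosen point. Substituting the identity for $\eta^*$ and using that $(\d_ep)^*$ does not depend on the metric gives
$$\eta^*|_e\bigl((\d_ep)^*\xi,(\d_ep)^*\zeta\bigr)=k\sum_{i=1}^k \eta_i^*|_e\bigl((\d_ep)^*\xi,(\d_ep)^*\zeta\bigr).$$
Each summand is the form on $T_b^*B$ induced by $\eta_i$, which is independent of $e$ by the $p$-transversality of $\eta_i$; hence the constant-coefficient sum is as well, and $\eta$ is $p$-transverse.

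I expect the only delicate point to be bookkeeping rather than substance: one must track the scaling behaviour $(c\,\mu)^*=c^{-1}\mu^*$ correctly to land on $\eta^*=k\sum_i\eta_i^*$. Conceptually, the whole proof hinges on performing the averaging in the cotangent direction. Were one to argue instead with the orthogonal complements $T_eF^\bot$, these subspaces would themselves move as the metric varies, so averaging the $\eta_i$ would deform the normal spaces nonlinearly and the fiber-independence would not survive; the annihilator formulation avoids this entirely because $T_eF^\circ$ is fixed once $p$ is fixed.
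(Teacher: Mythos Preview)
Your argument is correct and complete; the paper in fact states this proposition without proof, evidently regarding it as an immediate consequence of the cotangent reformulation of $p$-transversality. Your computation $\eta^*=k\sum_i\eta_i^*$ (the scaling is indeed as you wrote, since $(c\mu)^*=c^{-1}\mu^*$ and $*$ is an involution) together with the observation that $T_eF^\circ=\im(\d_ep)^*$ is metric-independent is exactly the argument implicit in the text.
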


In contrast, the tangent average is usually not $p$-transverse, as shown by the following simple example:

\begin{example}\label{cotangent.principle}
Let $\eta$ be the canonical metric on $\R^2$ and let $\tilde \eta$ be the metric on $\R^2$ with orthonormal frame $\{\partial_x + f(x,y) \partial_y,\partial_y\}$. If $p:\R^2\to\R$ is the projection in the first factor, both metrics are $p$-transverse, with push-forward the canonical metric. The same also holds for their \emph{cotangent average} $\frac 1 2(\eta^* +\tilde\eta^*)^*$. However, the \emph{tangent average} metric $\frac 1 2(\eta+\tilde\eta)$ is not $p$-trasnverse, for the vector $\partial_x+ \frac{1} 2 f(x,y)\partial_y$ is normal to $\partial_y$, its projection does not depend on $y$, but its norm does.
\end{example}

The philosophy that one must work on the cotangent bundle, rather than on the tangent bundle, will occur frequently in the sequel.  For example, for proper groupoids we will use averaging methods to construct suitable metrics, and these methods will only work when considering the dual metrics on the cotangent bundle.


We now establish a simple fact involving {\bf composition of Riemannian submersions} that will be very useful later.

\begin{lemma}
\label{triangle}
Let $q:\tilde E\to E$ be a surjective submersion, let $\eta$ be a $q$-transverse metric on $\tilde E$, and consider another surjective submersion $p:E\to B$.
$$\xymatrix@1{\tilde E \ar[r]^q & E \ar[r]^p & B}$$
Then $\eta$ is $pq$-transverse if and only if the push-forward metric $q_*\eta$ is $p$-transverse. In that case, we have $p_*(q_*(\eta))=(pq)_*(\eta)$.
\end{lemma}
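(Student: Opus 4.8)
The plan is to reduce both assertions to a single pointwise identity on the cotangent space of $B$, exploiting the cotangent viewpoint advocated above. Fix $\tilde e\in\tilde E$ and set $e=q(\tilde e)$, $b=p(e)=(pq)(\tilde e)$. Applied contravariantly, the chain rule gives the factorization
$$(\d_{\tilde e}(pq))^*=(\d_{\tilde e}q)^*\circ(\d_e p)^*\colon T_b^*B\to T_{\tilde e}^*\tilde E.$$
Since the fibers satisfy $(pq)^{-1}(b)=q^{-1}(p^{-1}(b))$, the $q$-fiber through $\tilde e$ is contained in the $pq$-fiber through $\tilde e$; in particular the image of $(\d_{\tilde e}(pq))^*$ lands inside the annihilator of the $q$-fiber, which is exactly the region where the isometry defining $q_*\eta$ is available. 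Note also that $pq$ is itself a surjective submersion, so the transversality/push-forward framework applies to it.

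The key step is to compare two quadratic forms on $T_b^*B$. For each $\tilde e$ let $\mu_{\tilde e}$ be the pullback of $\eta^*$ along $(\d_{\tilde e}(pq))^*$, and for each $e$ let $\nu_e$ be the pullback of $(q_*\eta)^*$ along $(\d_e p)^*$. I claim $\mu_{\tilde e}=\nu_{q(\tilde e)}$. Indeed, for $\xi\in T_b^*B$ put $\zeta=(\d_e p)^*\xi\in T_e^*E$; using the factorization and then the fact that, by the very definition of the push-forward, $(\d_{\tilde e}q)^*$ is an isometry from $(T_e^*E,(q_*\eta)^*)$ onto the annihilator of the $q$-fiber equipped with $\eta^*$, one computes
$$\mu_{\tilde e}(\xi,\xi)=\eta^*\big((\d_{\tilde e}q)^*\zeta,(\d_{\tilde e}q)^*\zeta\big)=(q_*\eta)^*(\zeta,\zeta)=\nu_e(\xi,\xi),$$
and polarization upgrades this to an identity of bilinear forms.

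Granting the identity, both claims follow formally. Transversality is just the statement that the relevant pullback form is constant along each fiber: $\eta$ is $pq$-transverse iff $\mu_{\tilde e}$ depends only on $b$ within each $pq$-fiber, and $q_*\eta$ is $p$-transverse iff $\nu_e$ depends only on $b$ within each $p$-fiber. For the implication $(\Leftarrow)$, two points of a common $pq$-fiber map under $q$ into a common $p$-fiber, so invariance of $\nu$ forces invariance of $\mu$. For $(\Rightarrow)$, surjectivity of $q$ lets me lift any two points of a $p$-fiber to points of a common $pq$-fiber, so invariance of $\mu$ forces invariance of $\nu$; this is the only place where surjectivity of $q$ enters. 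Finally, when both conditions hold, the dual metrics of the two push-forwards are exactly these common values, $((pq)_*\eta)^*=\mu$ and $(p_*(q_*\eta))^*=\nu$, so $\mu=\nu$ yields $p_*(q_*\eta)=(pq)_*\eta$ after dualizing back.

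Once the cotangent factorization is in place the computations are routine, so I do not expect a substantive obstacle. The only genuine care needed is bookkeeping: checking that each covector lands in the correct annihilator so that the push-forward isometries are applicable, and keeping straight the distinction between a metric and its dual, since (as stressed in Example \ref{cotangent.principle}) the correspondence $\eta\mapsto\eta^*$ is nonlinear and the argument lives entirely on the cotangent side.
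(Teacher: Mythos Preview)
Your argument is correct and follows essentially the same route as the paper: both proofs work on the cotangent side, use the factorization $(\d_{\tilde e}(pq))^*=(\d_{\tilde e}q)^*\circ(\d_e p)^*$, and exploit that $(\d_{\tilde e}q)^*$ is an isometry onto the annihilator of the $q$-fiber to identify the two induced forms on $T_b^*B$. Your introduction of the quadratic forms $\mu_{\tilde e}$ and $\nu_e$ is just a more explicit rendering of what the paper's diagram chase expresses, and your careful handling of the two implications (including the use of surjectivity of $q$) makes the ``easily follows'' in the paper's last line fully precise.
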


\begin{proof}
Given $\tilde e\in \tilde E$ denote $e=q(\tilde e)$ and $b=p(e)$.
Let us denote by $F_x$ the fiber corresponding to the map $x$.
We have the following commutative diagram, where the horizontal arrows are linear isomorphisms:
$$
\newdir{ (}{{}*!/-5pt/@^{(}}
\xymatrix@R=10pt{
T_{\tilde e}F_{pq}^\circ \ar@<2pt>@{ (->}[d] & 
T_eF_p^\circ \ar[l]  \ar@<2pt>@{ (->}[d] &
T_b^*B \ar[l]\\
T_{\tilde e}F_{q}^\circ  \ar@<2pt>@{ (->}[d] & 
T_e^*E \ar[l]  & & \\
T_{\tilde e}^*\tilde E}$$
Since $\eta$ is $q$-transverse we can push it forward so as the map $T_{\tilde e}F_q^\circ\from T_e^*E$ is an isometry, and hence also its restriction $T_{\tilde e}F_{qp}^\circ\from T_eF_p^\circ$. It becomes now clear that $T_eF_p^\circ\from T_b^*B$ is an isometry if and only if $T_{\tilde e}F_{qp}^\circ\from T_b^*B$ is, from where the result easily follows.
\end{proof}

\begin{remark}\label{pullback metric}
One final observation concerning Riemannian submersions: Given $f_1:E_1\to B$ and $f_2:E_2\to B$ Riemannian submersions, the fibred product $E_1\times_B E_2$ has a natural {\bf pullback metric} $\eta^{E_1\times_B E_2}$. It is defined by
$$\eta^{E_1\times_B E_2}((v,w),(v',w'))=\eta^{E_1}(v,v')+\eta^{E_2}(w,w')-\eta^B(u,u')$$
where $u=\d f_1(v)=\d f_2(w)$ and $u'=\d f_1(v')=\d f_2(w')$. This metric is smooth and has the property that all maps in the commutative diagram 
$$
\xymatrix{
E_1\times_B E_2\ar[d]\ar[r] & E_2\ar[d]^{f_2}\\
E_1\ar[r]_{f_1} & B}
$$
are Riemannian submersions.
\end{remark}




\subsection{Transversely invariant metrics for groupoid actions}

\label{sub:inv.metrics}


Suppose a Lie groupoid $G\toto M$ acts on a space $E\to M$. What does it mean for a metric in $E$ to be invariant under that action? An arrow $g$ in $G$ determines only a diffeomorphism $E_{s(g)}\to E_{t(g)}$ so the notion of invariant metric is not a straightforward generalization of the case of Lie group actions, which we start by recalling.

Given $\theta:G\action E$ a smooth action of a Lie group $G$ on a manifold $E$, for each $g\in G$ we let $\theta_g:E\to E$, $e\mapsto g\cdot e$ denote the translation by $g$. If $\eta^E$ is a Riemannian metric on $E$ then we say that
\begin{enumerate}[(a)]
\item $\eta^E$ is {\bf $\theta$-invariant} if for each $g\in G$ the map $\theta_g:E\to E$ is an isometry.
\item $\eta^E$ is {\bf transversely $\theta$-invariant} if for each orbit $O$ and each $g\in G$ the map $\bar \theta_g:TE/TO\to TE/TO$ is an isometry, or equivalently, if $(\theta_g)^*:TO^\circ\to TO^\circ$ is an isometry.
\end{enumerate}
Clearly (a) implies (b), which is weaker in general. For instance, any metric on $\R$ is obviously transversely invariant under translations $\R\action\R$, but it may not be invariant.

When moving from groups to groupoids, note that the notion of invariance as in (a) does not make sense. If $G\toto M$ is a Lie groupoid and $G\action E$ is an action, there is not a natural lift to a tangent action $G\action TE$. Nevertheless, we can still define transversely invariant metrics as in (b) by using the so-called normal representation, which we now recall.

\smallskip


Given $G\toto M$ a Lie groupoid and $O\subset M$ an orbit, the {\bf normal representation} $\lambda:G_O\action \nu(O)$ is a linear action of the restriction groupoid $G_O\toto O$ over the normal bundle $\nu(O)\to O$. It encodes the linear infinitesimal data around the orbit and plays a fundamental role in the theory (see eg. \cite{survey}). The action is defined by the composition
$$G_O\times_O \nu(O) \xto\cong \nu(G_O)\xto{\overline{dt}} \nu(O)$$
where the first map is the natural identification between the normal bundle of $G_O\subset G$ and the pullback of $\nu(O)$ through the source map \smash{$G_O\xto s O$}. Unraveling this construction, for each arrow \smash{$y\xfrom g x$} in $G_O$ the map $\lambda_g:\nu_x(O)\to \nu_y(O)$ can be geometrically described as follows: if $\gamma$ is a curve on $M$ whose velocity at 0 represents $v\in \nu_x(O)$, and $\tilde\gamma$ is a curve on $G$ such that $\tilde\gamma(0)=g$ and $s\tilde\gamma=\gamma$, then $\lambda_g(v)\in \nu_y(O)$ is defined by the velocity at 0 of $t\tilde\gamma$.

The {\bf conormal representation} $\lambda^*:G_O\action \nu^*(O)$ is defined by dualizing the previous one. Explicitly, we use the natural identification $\nu^*(O)\cong TO^\circ$ and an arrow \smash{$y\xfrom g x$} in $G_O$, acts on the conormal space at $x$ by
$$\lambda^*_g:T_x O^\circ\to T_{y}O^\circ, \quad \phi\mapsto \phi\circ \lambda_{g^{-1}}.$$ 

\begin{remark}\label{rmk:naturality.normal-representation}
The normal and the conormal representations are natural. More precisely, let $\Phi:G\to G'$ be a Lie groupoid map covering $\phi:M\to M'$.  Given \smash{$y\xfrom g x$} an arrow in the orbit $O\subset M$, write $x'=\phi(x)$, etc. and consider the following commutative diagram:
$$\xymatrix{
T_xO^\circ \ar[r]^{ds^ *} & T_g G_O^\circ & T_yO^\circ \ar[l]_{dt^ *}\\
T_{x'}O'^\circ \ar[r]^{ds'^*} \ar[u]^{\d\phi^*} & T_{g'} G_{O'}^\circ \ar[u]^{\d\Phi^*} & T_{y'}O'^\circ \ar[u]^{\d\phi^*} \ar[l]_{dt'^ *}
}$$
The horizontal arrows are all isomorphisms, the upper composition gives the conormal action corresponding to $g$, and the bottom composition the corresponding to $g'=\Phi(g)$, showing the naturality of the conormal representation. The argument for the normal representation is analog.
\end{remark}

If we have a metric $\eta^E$ on the manifold $E$ and $O\subset E$ is a submanifold, then on the normal bundle $\nu(O)$ we consider the metric coming from the identification $\nu(O)\cong  TO^\perp\subset TE$ and on the conormal bundle $\nu^*(O)=(TO)^\circ\subset T^*E$ we consider the restriction of the metric.

\begin{definition}\label{invariant-metric}
Let $\theta:G\action E$ be a Lie groupoid action. We say that a metric $\eta^E$ on $E$ is {\bf transversely $\theta$-invariant} if the normal representation of the action groupoid $G\times_M E\toto E$ is by isometries, or equivalently, if the conormal representation is by isometries.
\end{definition}


\subsection{Transversely invariant metrics and quotients}


Given $G$ a Lie groupoid, there is a correspondence between free and proper actions $G\action P$ and principal $G$-bundles $P\to B$:  the quotient space $P/G$ of a free proper action naturally inherits a smooth manifold structure, for which the projection $\pi:P\to P/G$ is a submersion (see e.g. \cite{survey}). This extends the better known case of free and proper actions of Lie groups and principal group bundles (see e.g. \cite{sharpe}, Appendix E). The next result is a Riemannian version of this correspondence.

\begin{lemma}
\label{quotient.metrics}
Let $\theta:G\action P$ be a free and proper groupoid action with quotient $\pi_P:P\to P/G$. A metric $\eta^P$ on $P$ is transversely $\theta$-invariant if and only if it is $\pi_P$-transverse. In such a case, $P/G$ inherits a well-defined {\bf quotient metric} $\bar\eta=\pi_*\eta$, for which the projection is a Riemannian submersion. 
\end{lemma}

\begin{proof}
The orbits $O$ of $\theta$ coincide with the fibers of $\pi_P:P\to P/G$, so $\eta^P$ is $\pi_P$-transverse if and only if for all $e,e'\in O$ the maps:
$$(T_e O)^\circ \xto{\cong} (T_{e'}O)^\circ,\quad e,e'\in O,$$
are isometries. But this map coincides with the conormal representation along the unique arrow \smash{$e'\xfrom g e$}. Therefore, being transversely $\theta$-invariant amounts to be the same as being $\pi_P$-transverse. The quotient metric is just the push-forward metric as defined in a previous paragraph.
\end{proof}

\begin{example}
\label{ex:left:right:action}
A Lie groupoid acts on its arrows by left translations. This actions is principal with projection the source map $s:G\to M$. Hence a metric $\eta$ on $G$ is transversely left invariant if and only if it is $s$-transverse. Analogously, a metric is transversely right invariant if and only if it is $t$-transverse.
\end{example}


Many of the metrics that we are going to deal with are quotient metrics for free and proper actions, as in Lemma \ref{quotient.metrics}. Then the following technical result becomes an important tool for constructing examples.

\begin{proposition}
\label{quotient.metrics:maps}
Let $(E,\eta^E)$, $(E',\eta^{E'})$ be Riemannian manifolds endowed with free proper groupoid actions $G\action E$, $G\action E'$, for which the metrics are transversely invariant.
If $p:(E,\eta^E)\to (E',\eta^{E'})$ is an equivariant Riemannian submersion then the induced map between the quotients
$\bar p:(E/G,\bar\eta^E)\to(E'/G,\bar\eta^{E'})$
is a Riemannian submersion as well.
\end{proposition}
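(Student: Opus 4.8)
The plan is to realize $\bar p$ as the fourth edge of the commutative square
$$\xymatrix{
E \ar[r]^p \ar[d]_{\pi_E} & E' \ar[d]^{\pi_{E'}}\\
E/G \ar[r]_{\bar p} & E'/G
}$$
and then extract the two conditions characterizing a Riemannian submersion for $\bar p$ by applying Lemma \ref{triangle} to the two factorizations of the common diagonal $\Phi := \bar p\circ\pi_E = \pi_{E'}\circ p$. First I would record the basic structure: by equivariance of $p$ the map $\bar p$ is well defined, and it is a smooth surjective submersion by the universal property of the quotient submersion $\pi_E$ (surjectivity of $\bar p$ follows from that of $p$). By Lemma \ref{quotient.metrics}, transverse invariance of $\eta^E$ and $\eta^{E'}$ means precisely that $\eta^E$ is $\pi_E$-transverse with $(\pi_E)_*\eta^E = \bar\eta^E$, and $\eta^{E'}$ is $\pi_{E'}$-transverse with $(\pi_{E'})_*\eta^{E'} = \bar\eta^{E'}$. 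Recall also that, by the deconstruction of Riemannian submersions established earlier, proving the proposition amounts to checking that $\bar\eta^E$ is $\bar p$-transverse and that $\bar p_*\bar\eta^E = \bar\eta^{E'}$.

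The key step is to first establish that $\eta^E$ is $\Phi$-transverse, and I would obtain this by running Lemma \ref{triangle} along the factorization $E \xto{p} E' \xto{\pi_{E'}} E'/G$. Here $\eta^E$ is $p$-transverse (part of $p$ being Riemannian) and its push-forward is $p_*\eta^E = \eta^{E'}$, which is $\pi_{E'}$-transverse; hence the lemma yields that $\eta^E$ is $\Phi$-transverse and, moreover, $\Phi_*\eta^E = (\pi_{E'})_*(p_*\eta^E) = (\pi_{E'})_*\eta^{E'} = \bar\eta^{E'}$.

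It then remains to transport this conclusion across the square by running Lemma \ref{triangle} along the other factorization $E \xto{\pi_E} E/G \xto{\bar p} E'/G$. Now $\eta^E$ is $\pi_E$-transverse with push-forward $\bar\eta^E$, and we have just shown $\eta^E$ is $\Phi$-transverse; the lemma therefore gives that $\bar\eta^E$ is $\bar p$-transverse, which is the first required condition. For the second, the ``in that case'' clause of the lemma computes $\bar p_*\bar\eta^E = \bar p_*\big((\pi_E)_*\eta^E\big) = \Phi_*\eta^E = \bar\eta^{E'}$, matching the quotient metric on $E'/G$. Together these are exactly the two conditions defining a Riemannian submersion, so $\bar p$ is one.

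I do not expect a genuine obstacle: once the diagonal $\Phi$ is identified as the pivot, the statement falls out of two applications of Lemma \ref{triangle}. The only points requiring mild care are bookkeeping ones --- verifying that $\bar p$ is a surjective submersion so that Lemma \ref{triangle} applies (which uses surjectivity of $p$ and the quotient property of $\pi_E$), and keeping straight which transversality datum each factorization supplies and consumes. The conceptual content is carried entirely by Lemma \ref{triangle} together with the identification, from Lemma \ref{quotient.metrics}, of the quotient metrics $\bar\eta^E$, $\bar\eta^{E'}$ as push-forwards.
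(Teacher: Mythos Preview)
Your proposal is correct and follows essentially the same approach as the paper's proof: both set up the commutative square with diagonal $\Phi=\pi_{E'}\circ p=\bar p\circ\pi_E$ and apply Lemma \ref{triangle} twice, once along each factorization, invoking Lemma \ref{quotient.metrics} to identify the quotient metrics as push-forwards. Your write-up is simply more explicit about the bookkeeping (transversality versus push-forward identities, and why $\bar p$ is a surjective submersion) than the paper's terse two-sentence argument.
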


\begin{proof}
Call $\pi_E:E\to E/G$ and $\pi_{E'}:E'\to E'/G$ the corresponding projections, which are Riemannian submersions by construction (see Lemma \ref{quotient.metrics}):
$$\xymatrix{ E \ar[d]_{\pi_E} \ar[r]^p & E' \ar[d]^{\pi_{E'}} \\ E/G \ar[r]^{\bar p} & E'/G}$$
Since $\pi_{E'}$ and $p$ are Riemannian submersions then so does their composition $\pi_{E'}p=\bar p\pi_E$, and since $\pi_E$ is a Riemannian submersion we conclude that $\bar p$ also is (cf. Lemma \ref{triangle}).
\end{proof}


\section{Riemannian groupoids}
\label{sec:Riemannian:groupoids}


Given $G\toto M$ a Lie groupoid, we let $\G{n}\subset G^n$ be the embedded submanifold consisting of chains of $n$ composable arrows in $G$. Thus $\G0=M$ are the objects, $\G1=G$ are the arrows, and $\G2=\{(h,g)|s(h)=t(g)\}\subset G\times G$ are the pairs of composable arrows, or equivalently the commutative triangles in $G$. We now discuss appropriate notions of metrics on each of these spaces. We emphasize that we will be mostly interested in metrics in $\G2$, which we consider to be the right notion of metric on a Lie groupoid. Nevertheless, the metrics in $\G1$ and in $\G0$, which have been studied before (see \cite{ppt,gghr}), also play a role in our approach.

\subsection{Metrics on the space of units: 0-metrics}


We restate the definition of \cite{ppt} in our language:

\begin{definition}[\cite{ppt}]
A {\bf 0-metric} on a Lie groupoid $G\toto M$ is a Riemannian metric $\e0$ on the manifold $\G0=M$ which is transversely invariant for the canonical action $G\action M$, $g\cdot s(g)=t(g)$.
\end{definition}

In \cite{ppt} the authors develop some aspects of the theory of 0-metrics. One of their main results states that every proper Lie groupoid admits a 0-metric \cite[Prop. 3.14]{ppt}, that for this metric the orbit foliation is a singular Riemannian foliation \cite[Prop. 6.4]{ppt}, and that the corresponding distance function induces a topology on the orbit space that agrees with the quotient topology \cite[Thm. 6.1]{ppt}. Some of these results will be generalized later. 


For now, let us observe that obviously not every Lie groupoid admits a 0-metric. Next we give a counter-example inspired in a foliation with infinite holonomy.

\begin{example}\label{no.example}
The action $\R\action S^1\times\R$ defined by $\lambda\cdot(z,t)=(e^{2\pi i\lambda}z,e^\lambda t)$), leads to a Lie groupoid $G\toto M$ with a single compact orbit $S^1\times 0$. For $x=(1,0)$ the isotropy group $G_x\cong\Z$ is generated by $g=(1,1,0)$, and its normal representation is given by multiplication by $e$, say $g\cdot[\partial_t]=e[\partial_t]$. Hence, there cannot be a 0-metric on $G\toto M$.
\end{example}

Nevertheless, there are plenty of examples of Lie groupoids endowed with 0-metrics. In some cases the invariance condition becomes vacuous, hence a 0-metric amounts to be the same as a metric on the units.

\begin{example}
The unit groupoid $M\toto M$ of a manifold $M$, a Lie group $G\toto\ast$ (viewed as a groupoid with a single object), and more generally any transitive Lie groupoid $G\toto M$, any metric on $M$ is a 0-metric: there is just one orbit, so the condition is vacuous.

Similarly, for a {\bf Lie group bundle} $G\toto M$, that is, a Lie groupoid on which the source and target maps agree, any metric on $M$ is a 0-metric: the orbits are just the points of $M$ and the normal representations are trivial. 
\end{example}

Let us turn to less trivial examples.

\begin{example}
If $p:M\to N$ is a submersion, then a 0-metric on the corresponding submersion groupoid $M\times_N M\toto M$ is the same thing as a $p$-transverse metric on $M$.
\end{example}


The notion of a 0-metric arose first in the context of \emph{\'etale groupoids}. Recall that an {\bf \'etale groupoid} $G\toto M$ is one on which both $G$ and $M$ have the same dimension, and therefore all the structural maps $s,t,m,u,i$ are \'etale, i.e., local diffeomorphisms, and the orbits are 0-dimensional. In an \'etale Lie groupoid every arrow $g$ induces exactly one (germ of a) bisection. The corresponding pseudogroup of transformations of $M$ is called the {\bf effect} of $G$. The following lemma is immediate.

\begin{lemma}\label{0-metrics.on.etal}
A 0-metric on an \'etale Lie groupoid $G\toto M$ is the same as a metric on $M$ invariant under the effect of $G$, that is, a metric for which the bisections act by isometries.
\end{lemma}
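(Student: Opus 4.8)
The statement to prove is Lemma~\ref{0-metrics.on.etal}: a 0-metric on an étale Lie groupoid is the same as a metric on $M$ invariant under the effect of $G$.

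\medskip

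The plan is to unwind both notions down to their infinitesimal content and observe that they literally coincide. The key dimensional fact is that for an étale groupoid the source map $s$ is a local diffeomorphism, so $\d_g s \colon T_g G \to T_{s(g)} M$ is an isomorphism for every arrow $g$; likewise for $t$. Consequently the orbits are $0$-dimensional, which means that at every point $x \in M$ the normal space $\nu_x(O) = T_x M / T_x O$ is canonically all of $T_x M$, and dually the conormal space $T_x O^\circ$ is all of $T_x^* M$. So the transverse invariance condition of Definition~\ref{invariant-metric} is no longer a condition on a proper quotient of the tangent space, but a condition on the full tangent (equivalently cotangent) space at $x$.

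\medskip

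First I would spell out the normal representation explicitly in this setting. For an arrow $y \xfrom{g} x$, recall from the geometric description given before Remark~\ref{rmk:naturality.normal-representation} that $\lambda_g(v)$ is obtained by lifting a representing curve $\gamma$ through $x$ to a curve $\tilde\gamma$ on $G$ with $s\tilde\gamma = \gamma$, $\tilde\gamma(0)=g$, and then differentiating $t\tilde\gamma$. Since $s$ is étale, this lift is locally unique and is exactly the local bisection $\sigma = t \circ s^{-1}$ determined by $g$; thus $\lambda_g = \d_x\sigma \colon T_x M \to T_y M$, the honest derivative of the germ of the effect diffeomorphism carrying $x$ to $y$. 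This is the heart of the argument: in the étale case the abstract normal representation is identified with the genuine tangent action of the pseudogroup of bisections. Having made this identification, requiring the normal (equivalently conormal) representation to act by isometries for the metric $\e0$ is precisely the statement that every local bisection $\sigma$ satisfies $\d_x\sigma \colon (T_xM, \e0_x) \to (T_yM, \e0_y)$ is a linear isometry, i.e. that $\sigma$ is a local isometry of $(M,\e0)$. That is exactly what it means for the effect of $G$ to act by isometries.

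\medskip

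The remaining step is to check that "acts by isometries" for a pseudogroup of local diffeomorphisms is the pointwise infinitesimal statement, which is standard: a local diffeomorphism is a Riemannian isometry onto its image if and only if its derivative is a linear isometry at each point. I would then close the loop by noting that every arrow $g$ of an étale groupoid induces a unique germ of bisection (as recalled in the text preceding Lemma~\ref{0-metrics.on.etal}) and, conversely, the effect is by definition generated by these bisections, so quantifying over all arrows $g \in G$ and quantifying over all elements of the effect are the same quantification. The main obstacle, such as it is, is purely expository: being careful that the normal representation as defined via the composite $G_O \times_O \nu(O) \xto{\cong} \nu(G_O) \xto{\overline{\d t}} \nu(O)$ really reduces to $\d\sigma$ in the étale case, rather than hand-waving the $0$-dimensionality of orbits. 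No genuine analytic difficulty arises; the content is the translation between the abstract groupoid-theoretic invariance and the classical notion of an isometry-invariant metric.
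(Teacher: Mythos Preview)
Your proposal is correct and is precisely the unwinding of definitions that the paper has in mind; the paper itself gives no proof, declaring the lemma ``immediate,'' and your argument simply makes explicit why it is immediate (zero-dimensional orbits force $\nu_x(O)=T_xM$, and the normal representation becomes the derivative of the germ of bisection $t\circ s^{-1}$). There is no alternative approach to compare against.
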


The orbit space of a proper effective \'etale groupoid has an {\em orbifold structure}. Conversely, every (effective) orbifold can be obtained in this way, setting a 1-1 correspondence between Morita classes of such groupoids and isomorphism classes of effective orbifolds (see \cite{mm}). Under this correspondence, it is easy to see that a 0-metric on the groupoid amounts to be the same thing as a metric on the orbifold, as defined for instance in loc. cit. (see also \cite[Prop 5.5]{hepworth}).


\begin{remark}
In some works (see, e.g., \cite{glick}) a {\em Riemannian groupoid} $G\toto M$ is defined as a Lie groupoid endowed with a metric $\eta$ on the units for which any bisection acts by isometries. For \'etale groupoids this is just the notion of a 0-metric, but for general groupoids this is not a very useful definition: if a Lie groupoid admits such a metric then all its orbits must be 0-dimensional. 
In fact, if $O\subset M$ is an orbit with positive dimension and \smash{$y\xfrom g x$} is an arrow on $O$, then for arbitrary non-zero vectors $w\in T_yO$ and $v\in T_xO$ we can always find $u\in T_gG$ with $\d t(u)=w$ and  $\d s(u)=v$ (see \cite[Prop. 3.5.1]{survey}). A subspace $S\subset T_gG$ complementing both $\ker_g \d s$ and $\ker_g \d t$ and containing $u$ then leads to a bisection relating $w$ and $v$, from where they should have the same norm, contradicting that they are arbitrary.
\end{remark}


\subsection{Metrics on the space of arrows: 1-metrics}


We recall now the definition of a metric on a groupoid that one can find in \cite{gghr}, which we reformulate in our language:

\begin{definition}[\cite{gghr}]
A {\bf 1-metric} on $G\toto M$ is a metric $\e1$ on the manifold $\G1=G$ which is transversely left invariant (see Example \ref{ex:left:right:action}) and for which the inversion $i$ is an isometry.
\end{definition}

As we have seen, transversely left invariant is the same as $s$-transverse, and assuming that the inversion $i:G\to G$ is an isometry, this is equivalent to $t$-transverse, by Lemma \ref{triangle}.
This gives many equivalent formulations for previous definition. Note that if a metric $\eta$ on $G$ is both $s$-transverse and $t$-transverse, it may not be invariant under inversion, but it leads to a groupoid metric by considering the cotangent average $\e1=(\frac 1 2(\eta^* + i^*\eta^*))^*$.


Given $G\toto M$ a Lie groupoid and $\e1$ a 1-metric, since the inversion is an isometry, by Lemma \ref{triangle} the push-forward metrics $s_*\e1$ and $t_*\e1$ on $M$ agree. We denote this common induced metric on $M$ by $\e0$. Note that $\e0$ differs in general from the restriction of $\e1$ along the unit map, as is observed in \cite{gghr} and we will see in many examples. The proof of the first two items in the following proposition can also be found in \cite{gghr}:

\begin{proposition}
\label{metric.on.the.units}
Given $G\toto M$ a Lie groupoid, $\e1$ a 1-metric and $\e0$ the induced metric on $M$. Then:
\begin{enumerate}[(i)]
\item The source and target maps are Riemannian submersions, and the units $u(M)\subset G$ form a totally geodesic submanifold;
\item $\e0$ makes the orbit foliation $F_M$ of $M$ into a singular Riemannian foliation;
\item $\e0$ is transversely invariant for the action $G\action M$, namely it is a 0-metric.
\end{enumerate}
\end{proposition}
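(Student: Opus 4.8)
The plan is to prove the three items in the order (i), (iii), (ii), since the \emph{0-metric} property (iii) is what feeds into the foliation statement (ii). For (i), the submersion claims are essentially a restatement of the definitions: $\e1$ is $s$-transverse (being transversely left invariant, see Example~\ref{ex:left:right:action}) and, since $i$ is an isometry, also $t$-transverse by Lemma~\ref{triangle}; moreover by construction $\e0=s_*\e1=t_*\e1$. The cotangent characterization of Riemannian submersions recalled above (namely, $p$ is Riemannian iff $\eta^E$ is $p$-transverse and $p_*\eta^E=\eta^B$) then says precisely that $s$ and $t$ are Riemannian submersions onto $(M,\e0)$.

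For the totally geodesic claim I would use that $i\colon G\to G$ is an isometry, so each connected component of its fixed-point set $\mathrm{Fix}(i)$ is totally geodesic. Since $i\circ u=u$ we have $u(M)\subseteq\mathrm{Fix}(i)$, so it suffices to show $u(M)$ is open in $\mathrm{Fix}(i)$, i.e.\ that the $(+1)$-eigenspace of $\d_{u(x)}i$ is exactly $T_{u(x)}u(M)$. Using the splitting $T_{u(x)}G=\d u(T_xM)\oplus A_x$ with $A_x=\ker\d_{u(x)}s$, differentiating $i\circ u=u$ gives $\d_{u(x)}i=\id$ on $\d u(T_xM)$, while on $A_x$ one has $\d_{u(x)}i(A_x)\subseteq\ker\d_{u(x)}t$ (from $t\circ i=s$) and $\d_{u(x)}i=-\id$ on the isotropy algebra $\g_x=\ker\d s\cap\ker\d t$; a short computation then excludes nonzero $(+1)$-eigenvectors in $A_x$. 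Hence the $(+1)$-eigenspace is $\d u(T_xM)=T_{u(x)}u(M)$, so near each unit $\mathrm{Fix}(i)$ has dimension $\dim M$ and coincides with $u(M)$, proving it totally geodesic.

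For (iii) I would realize the conormal representation as a composite of submersion isometries. Fix an arrow \smash{$y\xfrom g x$} in an orbit $O$. The Riemannian submersion property of $s$ and $t$ gives isometries $(\d_g s)^*\colon T_x^*M\to(\ker\d_g s)^\circ$ and $(\d_g t)^*\colon T_y^*M\to(\ker\d_g t)^\circ$. Since the $t$-fiber through $g$ maps under $s$ into $O$, we get $(\d_g s)^*(T_xO^\circ)\subseteq(\ker\d_g s)^\circ\cap(\ker\d_g t)^\circ=:W$; a dimension count gives $\dim W=\dim M-\dim O=\dim T_xO^\circ$, so in fact $(\d_g s)^*(T_xO^\circ)=W$, and symmetrically $(\d_g t)^*(T_yO^\circ)=W$. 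By Remark~\ref{rmk:naturality.normal-representation} the conormal representation $\lambda_g^*\colon T_xO^\circ\to T_yO^\circ$ is exactly $((\d_g t)^*)^{-1}\circ(\d_g s)^*$, a composite of isometries onto $W$, hence an isometry. Thus $\e0$ is transversely invariant, i.e.\ a 0-metric.

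For (ii), the orbit foliation is always a singular foliation: its tangent distribution is the image of the anchor, generated by the smooth vector fields $\rho(\alpha)$ for sections $\alpha$ of the algebroid. It remains to verify the Riemannian (equidistance) condition, that geodesics meeting a leaf perpendicularly stay perpendicular to every leaf they meet. The strategy is to deduce this from the transverse invariance established in (iii): time-dependent sections of the algebroid integrate to flows of local bisections acting on $M$, and transverse invariance forces these to act by transverse isometries, which propagates perpendicularity of a normal geodesic along the leaf, exactly as in the classical case of isometric Lie group actions. I expect this to be the main obstacle: one must upgrade the \emph{infinitesimal} transverse invariance (the normal representation acting by isometries) to the \emph{nonlinear} equidistance of nearby orbits, and do so uniformly across the singular stratification of varying orbit dimension---most cleanly handled by passing to a slice transverse to the orbit and analyzing the induced infinitesimal foliation there.
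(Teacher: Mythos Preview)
Your treatment of (i) and (iii) is essentially the paper's argument, stated dually for (iii) and with more care in (i) (the paper simply asserts $u(M)$ is the fixed set of $i$; your eigenspace computation showing $u(M)$ is open in $\mathrm{Fix}(i)$ is the honest version of that claim, since order-two isotropy elements can exist).

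The real discrepancy is in (ii). The paper does \emph{not} derive (ii) from (iii); it argues directly on $G$ using the $1$-metric. Since $s$ and $t$ are Riemannian submersions, the $s$-fiber and $t$-fiber foliations of $G$ are each Riemannian. Because $T_gG_O=\ker\d_gs+\ker\d_gt$, a geodesic in $G$ orthogonal to some $G_O$ is orthogonal to both fiber foliations, hence stays orthogonal to both, hence stays orthogonal to every $G_{O'}$ it meets. Finally, a geodesic in $M$ perpendicular to an orbit lifts horizontally along $s$ to a geodesic in $G$ perpendicular to $G_O$, and one projects back down. This handles the singular case without any stratification analysis.

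Your route via (iii) has a genuine gap as written. The flows of local bisections do act by the normal representation on $\nu(O)$, so by (iii) they are \emph{transverse} isometries; but a transverse isometry need not send geodesics to geodesics, so the phrase ``propagates perpendicularity of a normal geodesic along the leaf'' does not follow. What (iii) actually buys you in the regular case is holonomy-invariance of the normal metric, i.e.\ the bundle-like condition, from which the geodesic property follows by the usual Koszul-formula argument---but that is a different mechanism than the bisection-flow picture you sketch, and extending it across the singular strata is exactly the difficulty you flag. The paper's lift-to-$G$ argument sidesteps all of this: it uses the $1$-metric (not just the induced $0$-metric), and the sum decomposition $T_gG_O=\ker\d_gs+\ker\d_gt$ is what makes the singular case go through for free. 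I would replace your plan for (ii) with that argument.
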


\begin{proof}
The units form a totally geodesic submanifold because they form the fixed point set of the inversion map, which is an isometry. 
That the source and target are Riemannian submersions is immediate. It implies in particular that the foliations on $G$ given by the source and target fibers are Riemannian, namely a geodesic which is orthogonal to some leaf remains orthogonal to every leaf it meets. Now, since for every $g\in G_O$ we have $T_g G_O=\ker \d_g s + \ker \d_g t$, it follows that the foliation $F_G$ on $G$ given by the orbits of the groupoid is also Riemannian. This easily implies that the foliation $F_M=s_*(F_G)$ is also Riemannian, for we can locally lift geodesics along $s$.

Regarding the last statement, given \smash{$y\xfrom g x$} an arrow on $G$, its action under the normal representation consists of the composition $\overline{\d t} \circ\overline{\d s}^{-1}$.
$$N_xO \xfrom{\overline{ds}} N_gG_O \xto{\overline{dt}} N_yO$$
Since the source and target map are Riemannian submersions, and since
$T_gG_O=ds^{-1}(T_xO)=dt^{-1}(T_yO)$, the two maps above are isometries and we are done.
\end{proof}


If $\e1$ is a 1-metric inducing $\e0$ we will say that $\e1$ is an {\bf extension} of $\e0$. Let us start by giving an example of a 0-metric than cannot be extended to a 1-metric.

\begin{example}
Consider the vector field $X=x^2\partial_x$ in $\R$, whose integral curve with initial condition $x\neq 0$ is given by $\gamma_x(\epsilon)=\frac{x}{1-x\epsilon}$.
Denote by $G\toto\R$ the corresponding {\em flow groupoid}: the arrows in $G$ are pairs $(\epsilon,x)$ for which $\gamma_x(\epsilon)$ is defined. For this groupoid, we have $s(\epsilon,x)=x$ and $t(\epsilon,x)=\gamma_x(\epsilon)$, while multiplication is defined in the obvious way.

Now any metric $\e0$ on $\R$ is a 0-metric: if $x\neq 0$ then the normal space to the orbit is trivial, so there is no requirement on $\e0$. If $x=0$, the normal space to the orbit is $\R$, but the normal representation of $G$ on $\R$ is trivial because the linear part of the vector field $X$ is zero, so again there is no condition on the metric. On the other hand, there are no 1-metrics on $G$: this can be seen using either (i) or (ii) in Proposition \ref{metric.on.the.units}: the orbit foliation $F_M$ of $M$ has orbits $\R_{<0}$, $\{0\}$ and $\R_{>0}$, so cannot be a Riemannian foliation; also, there is no metric $\e1$ in $G$ for which \emph{both} the s-fibres and the t-fibres are Riemannian foliations.
\end{example}

On the other hand, there are cases on which the extension is unique, e.g. \'etale groupoids, and still other cases on which the same 0-metric admits many extensions.

\begin{example}\label{pair.groupoid}
Given $\e0$ any metric on the manifold $M$, then $\e1:=\e0\oplus\e0$ yields a 1-metric on the pair groupoid $M\times M\toto M$.  However, not every extension of $\e0$ arises in this way. Consider, for instance, $M=\R$, $a\neq 0$, then we have extensions $\e1$ of the canonical metric $\e0$ on $\R$ of the form:
$$(\e1)^*=\begin{bmatrix}1 & a\\ a & 1\end{bmatrix}.$$
relative to the canonical basis for $\R^2$.
\end{example}

There are classes of groupoids where a 0-metric can always be extended to a 1-metric (although maybe not in unique way, like in the previous example). Recall that a {\bf foliation groupoid} $G\toto M$ is a Lie groupoid for which every isotropy group is discrete. Examples of foliation groupoids include \'etale groupoids and submersion groupoids. Alternative characterizations of foliations groupoids are: (i) a Lie groupoid which is Morita equivalent to an \'etale groupoid, or (ii) a Lie groupoid whose Lie algebroid has injective anchor map (see \cite{cm}).

\begin{proposition}\label{prop:foliation.groupoids}
If $G\toto M$ is a foliation groupoid and $\e0$ is a 0-metric, then there exists a 1-metric extending it.
\end{proposition}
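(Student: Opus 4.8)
The plan is to reduce the problem to a symmetric transversality statement and then solve it by an explicit pullback-type formula on $G$. Recall from the discussion following the definition of a $1$-metric that, to produce a $1$-metric extending $\e0$, it is enough to build a single metric $\eta$ on $\G1=G$ which is simultaneously $s$-transverse and $t$-transverse and satisfies $s_*\eta=t_*\eta=\e0$: the cotangent average $\e1:=\bigl(\tfrac12(\eta^*+i^*\eta^*)\bigr)^*$ is then inversion invariant, is again $s$- and $t$-transverse by the cotangent-average principle, and still pushes forward to $\e0$ (using $s\circ i=t$ and Lemma \ref{triangle}). In fact I will arrange for $\eta$ to be inversion invariant already, so that $\e1=\eta$.

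Next I would use the defining property of a foliation groupoid, namely that every isotropy group is discrete. Since the set of arrows from $x$ to $y$ is a coset of the isotropy group $G_x$, it is discrete, so $\ker_g \d s\cap\ker_g\d t=0$ for every $g$ and hence $(s,t):G\to M\times M$ is an immersion. Writing $x=s(g)$, $y=t(g)$ (which lie on a common orbit $O$) and letting $\pi_x:T_xM\to T_xO^{\perp}$ be the orthogonal projection for $\e0$, I would then define $\eta$ by the explicit formula, modeled on the pullback metric of Remark \ref{pullback metric},
\[
\eta_g(v,v')=\e0_x(\d s\,v,\d s\,v')+\e0_y(\d t\,v,\d t\,v')-\e0_x(\pi_x \d s\,v,\pi_x \d s\,v').
\]

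The verification then splits into three points. The correction term is consistent between the two sides, i.e. $\e0_x(\pi_x\d s\,v,\pi_x\d s\,v')=\e0_y(\pi_y\d t\,v,\pi_y\d t\,v')$, precisely because $\e0$ is a $0$-metric: the normal representation along $g$ carries $\pi_x\d s\,v$ to $\pi_y\d t\,v$ and is an isometry. This makes $\eta$ symmetric under exchanging $s$ and $t$, hence inversion invariant since $(s,t)\circ i$ is the flip on $M\times M$. Positive-definiteness follows as in Remark \ref{pullback metric}: since $\e0$ turns the orbit foliation into a Riemannian foliation (Proposition \ref{metric.on.the.units}), one has $\e0_x(\d s\,v,\d s\,v)\ge\e0_x(\pi_x\d s\,v,\pi_x\d s\,v)$, so $\eta(v,v)\ge \e0_y(\d t\,v,\d t\,v)\ge 0$, with equality forcing $\d t\,v=0$ and $\d s\,v=0$, whence $v=0$ by the immersion property. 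Finally $s$ and $t$ are Riemannian submersions onto $(M,\e0)$, exactly as the two projections of a pullback metric are, so $\eta$ is $s$- and $t$-transverse with $s_*\eta=t_*\eta=\e0$, and is therefore the desired $1$-metric.

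I expect the main obstacle to be producing a metric that is $s$-transverse and $t$-transverse at once with the correct pushforward: the naive candidate $(s,t)^*(\e0\oplus\e0)$ is inversion invariant but double-counts the directions transverse to the orbits and rescales the pushforward (as one already sees for étale groupoids, where it produces a factor $2$), so it fails to extend $\e0$. The subtraction of the transverse term $\e0_x(\pi_x\d s\,v,\pi_x\d s\,v')$ is what repairs the pushforward, and the delicate point is that this corrected bilinear form remains positive-definite — which is exactly where the foliation-groupoid hypothesis (giving the immersion $(s,t)$) and the $0$-metric hypothesis (giving the Riemannian-foliation inequality and the consistency of the correction term) are both needed.
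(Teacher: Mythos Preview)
Your construction is correct and coincides with the paper's: expanding your formula in the orthogonal decompositions $T_xM=T_xO\oplus N_xO$, $T_yM=T_yO\oplus N_yO$ (writing $\d t\,v=(v_1,v_2)$, $\d s\,v=(v_3,v_4)$) gives $\eta_g(v,v)=|v_1|^2+|v_2|^2+|v_3|^2$, which is exactly the paper's prescription ``define the norm of $(v_1,v_2,v_3,v_4)$ as that of $(v_1,v_2,v_3,0)$''. One small correction: the inequality $\e0_x(\d s\,v,\d s\,v)\ge\e0_x(\pi_x\d s\,v,\pi_x\d s\,v)$ you use for positive-definiteness holds simply because $\pi_x$ is an orthogonal projection --- Proposition~\ref{metric.on.the.units} concerns the $0$-metric \emph{induced by} a $1$-metric and is not yet available here, so should not be invoked.
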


\begin{proof}
Call $q$ the codimension of an orbit on $M$. Given \smash{$y\xfrom g x$} on $G$, we can decompose $T_xM$ and $T_yM$ as orthogonal sums between the tangent spaces to the orbit $O$ and their complements.
The differential of the anchor map
$$\d_g (s,t):T_gG\to T_yM\times T_x M= T_yO\oplus N_yO \oplus T_xO\oplus N_xO$$
is injective, hence we can identify $T_gG$ with its image, which are the vectors $(v_1,v_2,v_3,v_4)$ satisfying $\rho_g [v_4]=[v_2]$, where $\rho_g$ denotes the normal representation. By hypothesis, the norm of $v_2$ equals that of $v_4$. We define (the square of) the norm of $(v_1,v_2,v_3,v_4)$ as that of $(v_1,v_2,v_3,0)$. In other words, we preserve the metric along the orbit and correct it on the normal direction. It is immediate that  the metric $\e1$ defined this way is both $s$-fibred and $t$-fibred, and that is sitting over $\eta$. Since $(s,t)\circ i = \tau \circ (s,t)$ it easily follows that our metric is invariant for the inversion, which completes the proof.
\end{proof}

Note that the construction in this proof does not depend on any choices, so it leads to a completely determined metric $\e1$. Still, the example of the pair groupoid shows that there may be other extensions.


\begin{example}
Given a regular foliation $\F$ on a manifold $M$, we have its {\bf monodromy groupoid} $\Mon(\F)\toto M$ and its {\bf holonomy groupoid} $\Hol(F)\toto M$. The first consists of homotopy classes of paths within a leaf, and the second is a quotient of the previous one, where two paths are identified if they induce the same holonomy. These examples of foliation groupoids may fail to be Hausdorff.

If $M$ is endowed with a metric for which the leaves stay equidistant, namely every geodesic orthogonal to a leaf remains orthogonal to every leaf it meets, then $\F$ is a {\bf Riemannian foliation}. For a Riemannian foliation holonomy and linear holonomy coincide, so the holonomy groupoid acts faithfully by isometries on the normal space to the foliation, and hence it is always Hausdorff.

A 0-metric on $\Hol(F)\toto M$ is the same as a metric on $M$ making the foliation $\F$ Riemannian. Proposition \ref{prop:foliation.groupoids} recovers the main result of \cite{gghr}, namely that the holonomy groupoid of a regular Riemannian foliation has a 1-metric extending the original one.
\end{example}


\subsection{Metrics on the space of composable arrows: 2-metrics}
\label{sec:2:metrics}


The notions of 0-metrics and 1-metrics, discussed in the previous paragraphs, do not take into account the groupoid multiplication $m:\G2\to G$. Hence, they ignore a fundamental ingredient of the concept of a Lie groupoid. This is fixed by considering metrics on the space of composable arrows $\G2$. In order to do that we need first to discuss certain symmetries of the manifold $\G2$, which are formal consequences of the combinatorics underlying its structure.

Each point in $\G2$ represents a pair of composable arrows, or equivalently, a commutative triangle of arrows. Hence we have a natural action of the symmetric group $S_3\action\G2$, by permuting the objects of a given triangle.
For example, in the diagram below, the permutation $(xy)(z)$ sends the point $(h,g)\in\G2$ to $(hg,g^{-1})$:
$$
\begin{matrix}
\xymatrix{ &y \ar@/_/[dl]_{h} & \\ z& & x \ar@/_/[ul]_{g} \ar@/^/[ll]^{h\cdot g}}
\end{matrix}
\qquad\mapsto \qquad
\begin{matrix}
\xymatrix{ &x \ar@/_/[dl]_{h\cdot g} \ar@/^/[dr]^{g}& \\ z& & y \ar@/^/[ll]^{h}}
\end{matrix}
$$
We leave it to the reader to find the other transformations for this action in terms of the pair notation in $\G2$.

\begin{remark}
A precise formulation can be given as follows. Denote by $[2]$ the pair groupoid on a set with three objects $\{0,1,2\}$. Then $S_3$ naturally identifies with the groupoid automorphisms $[2]\to[2]$, and $\G2$ with the set of functors $[2]\to G$. Then the action $S_3\action\G2$ is just given by pre-composition.
\end{remark}

There are also three commuting left groupoid actions $\theta^1,\theta^2,\theta^3:G\action\G2$, defined as follows.
$$
\theta^1(k)(g,h)=(kg,h)
\qquad
\theta^2(k)(g,h)=(gk^{-1},kh)
\qquad
\theta^3(k)(g,h)=(g,hk^{-1})
$$
They are free and proper, and their orbits agree with the fibers of the maps $\pi_2,m,\pi_1:\G2\to G$, respectively, hence yielding three principal $G$-bundles. Observe that the action $S_3\action \G2$ interchanges these principal actions $\theta^i$.

\begin{definition}
A {\bf 2-metric} on $G\toto M$ is a metric $\e2$ on the manifold $\G2$ which is transversely invariant for the action $\theta^1:G\action \G2$, $k\cdot (g,h)=(kg,h)$, and for which the group $S_3$ acts by isometries. The pair $(G\toto M,\e2)$ is called a \textbf{Riemannian groupoid}.
\end{definition}

\begin{remark}
Unfortunately, the term \emph{Riemannian groupoid} has been used in the literature with different meanings, as in \cite{gghr} and \cite{glick}, which we have already discussed.
Other approach that may seem natural is to consider metrics that are multiplicative tensors, in the sense of \cite{bc}, but a simple computation shows that being positive definite implies that the groupoid must be 0-dimensional.
We hope to convince the reader that a 2-metric is the right geometric structure that one should add to a Lie groupoid in order to call it \emph{Riemannian}.
\end{remark}

It should be clear that the definition of a 2-metric can be reformulated in many equivalent ways. For example, we can say:
\begin{itemize}
 \item $\e2$ is transversely invariant for (any of) the actions $\theta^i:G\action \G2$ and the group $S_3$ acts by isometries, or equivalently, 
\item $\e2$ is transverse to (any of) the maps $\pi_2,m,\pi_1:\G2\to G$ and the group $S_3$ acts by isometries.
\end{itemize}


In the next section we will construct many examples of 2-metrics. 
For now, let us relate this notion with those introduced before:

\begin{proposition}
\label{2-metric.induces.1-metric}
Let $G\toto M$ be a Lie groupoid. A 2-metric $\e2$ on $\G2$ induces a 1-metric $\e1$ on $\G1$, and hence also a 0-metric on $\G0$.
\end{proposition}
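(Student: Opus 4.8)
The plan is to obtain $\e1$ by pushing $\e2$ forward along a face map $\G2\to\G1$ and then to check the two requirements in the definition of a 1-metric, namely transversality and invariance under the inversion $i$; the induced 0-metric will then come for free from Proposition \ref{metric.on.the.units}. Recall that $\e2$ is transverse to one---hence, by $S_3$-invariance, to each---of the three face maps $\pi_1,\pi_2,m\colon\G2\to\G1$. In particular $\e2$ is $\pi_2$-transverse, so the push-forward metric $\e1:=(\pi_2)_*\e2$ is defined and makes $\pi_2\colon(\G2,\e2)\to(\G1,\e1)$ a Riemannian submersion. (By $S_3$-symmetry the same metric arises from $\pi_1$ and from $m$, but I will only use $\pi_1$ and $\pi_2$.)

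To see that $i$ is an isometry of $\e1$, I would invoke the $S_3$-action. The transposition $\sigma\in S_3$ that reverses the arrow $\pi_2(h,g)=g$---the one acting by $(h,g)\mapsto(hg,g^{-1})$ in the notation above---is an isometry of $(\G2,\e2)$ and satisfies $\pi_2\circ\sigma=i\circ\pi_2$. Functoriality of the push-forward then gives $i_*\e1=(i\circ\pi_2)_*\e2=(\pi_2\circ\sigma)_*\e2=(\pi_2)_*\e2=\e1$, where the third equality uses that $\sigma$ is an isometry.

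The substantive step, which I expect to be the main obstacle, is the transversality of $\e1$. Applying Lemma \ref{triangle} to $\G2\xto{\pi_2}\G1\xto{t}M$, it suffices to prove that $\e2$ is transverse to the middle-vertex map $\epsilon:=t\circ\pi_2=s\circ\pi_1\colon\G2\to M$, which sends $(h,g)$ to the common object $s(h)=t(g)$; then $\e1=(\pi_2)_*\e2$ is $t$-transverse. The point is that the fiber of $\epsilon$ through $(h,g)$ is the product of its $\pi_2$-fiber and its $\pi_1$-fiber: these two fibers meet only at $(h,g)$ and their tangent spaces span $\ker d\epsilon$. Thus any two points of a fixed $\epsilon$-fiber can be joined by first moving within a $\pi_2$-fiber (keeping $\pi_2$ fixed) and then within a $\pi_1$-fiber (keeping $\pi_1$ fixed). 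Since $\epsilon=t\circ\pi_2$ yields $d\epsilon^*=d\pi_2^*\circ dt^*$ and $\epsilon=s\circ\pi_1$ yields $d\epsilon^*=d\pi_1^*\circ ds^*$, the comparison isomorphism on annihilators associated with $\epsilon$ factors as the composition of the comparison isomorphism for $\pi_2$ along the first leg and that for $\pi_1$ along the second. Each of these is an isometry because $\e2$ is $\pi_2$- and $\pi_1$-transverse, so the composite is an isometry and $\e2$ is $\epsilon$-transverse.

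Putting the two steps together, $\e1$ is $t$-transverse and invariant under $i$, hence a 1-metric, and it induces a 0-metric on $M=\G0$ by Proposition \ref{metric.on.the.units}. The delicate bookkeeping lies entirely in the last step: one must check that the isometry supplied by $\pi_2$-transversality carries the smaller subspace $(\ker d\epsilon)^\circ\subseteq(\ker d\pi_2)^\circ$ onto the corresponding subspace at the intermediate point. This is exactly where the factorization $d\epsilon^*=d\pi_2^*\circ dt^*$ is used: along the first leg both $\pi_2$ and $t\circ\pi_2$ are constant, so the relevant subspace of $T^*\G2$ is the $d\pi_2^*$-image of a fixed annihilator in $T^*\G1$, which the $\pi_2$-comparison isometry preserves. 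Equivalently, this reflects the commutation of the two principal actions $\theta^1$ and $\theta^3$ whose orbits are the $\pi_2$- and $\pi_1$-fibers.
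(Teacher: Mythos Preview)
Your proof is correct and follows the same overall architecture as the paper's: push $\e2$ forward along a face map, use an element of $S_3$ to get $i$-invariance, and then reduce $s$- (or $t$-) transversality of $\e1$ via Lemma~\ref{triangle} to transversality of $\e2$ with respect to a composite $\G2\to M$. The substantive difference lies in how that last transversality is established. The paper works with the composite $s\pi_2=sm$; it introduces the \emph{arrow groupoid} $\tilde G\toto G$, observes that the fibers of $s\pi_2$ are precisely the orbits of the combined action $\theta^{12}:\tilde G\action\G2$, and then uses naturality of the conormal representation (Remark~\ref{rmk:naturality.normal-representation}) together with the factorization $(g',g)=(g',\id)(\id,g)$ to deduce transverse $\theta^{12}$-invariance from transverse $\theta^1$- and $\theta^2$-invariance.

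You instead work with the composite $\epsilon=t\pi_2=s\pi_1$ and argue directly at the level of annihilators: since each $\epsilon$-fiber is the product of a $\pi_2$-fiber and a $\pi_1$-fiber, any two points are joined in two legs, and on each leg the $\epsilon$-comparison isomorphism is the restriction of the $\pi_2$- (resp.\ $\pi_1$-) comparison isomorphism, which is already known to be an isometry. This is the same idea---the big fiber is generated by two families of small fibers for which transversality is given---but your packaging is more elementary: it avoids introducing $\tilde G$ and the machinery of Lemma~\ref{quotient.metrics} and Remark~\ref{rmk:naturality.normal-representation}, staying entirely within the cotangent framework of Lemma~\ref{triangle}. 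The paper's version, on the other hand, is phrased so as to generalize cleanly to the $n$-metric setting (combining several commuting principal actions into one).
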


\begin{proof}
Given a 2-metric on $\G2$, since $\e2$ is invariant under the action of $S_3$, which interchanges the maps $m,\pi_1,\pi_2$, it easily follows that the push-forward metrics $m_*\e2,(\pi_1)_*\e2,(\pi_2)_*\e2$ all agree (see Lemma \ref{triangle}). Denote the resulting metric on the manifold $G$ by $\e1$.

We have $i m = m \phi$ for some $\phi\in S_3$, which implies that the inversion map $i$ preserves the metric $\e1$ (again Lemma \ref{triangle}). It remains to show that $\e1$ is transverse to the source map $s:G\to M$, or equivalently, that $\e2$ is transverse to the map $s\pi_2=sm:\G2\to M$ (once more, Lemma \ref{triangle}).

The principal actions $\theta^1,\theta^2:G\action \G2$ can be combined to give a simultaneous action
$$\theta^{12}:\tilde G\action \G2 \qquad (g',g)\cdot [h_1,h_2,h_3]=[gh_1,g'h_2,h_3]$$
where $\tilde G\toto G$ is the so-called {\it arrow groupoid} of $G\toto M$, whose objects are the arrows of $G$ and whose arrows are the commutative squares. The new action $\theta^{12}$ is again free and proper, and its orbits are exactly the fibers of $s\pi_2$.
We will show that $\e2$ is transversely $\theta^{12}$-invariant (Proposition \ref{quotient.metrics}).

By hypothesis, $\e2$ is both transversely $\theta^1$ and $\theta^2$ invariant. And the action groupoids of $\theta^1$ and $\theta^2$ embed into the action groupoid of $\theta^{12}$, via the inclusions
$$ (g',[h_1,h_2,h_3])\mapsto ((g',\id),[h_1,h_2,h_3]) \qquad (g,[h_1,h_2,h_3])\mapsto ((\id,g),[h_1,h_2,h_3]).$$
Then, we can use the naturality of the conormal representation with respect to these inclusions (cf. \ref{rmk:naturality.normal-representation}), and the fact that every arrow $(g',g)$ in $\tilde G$ factors as $(g',\id)(\id,g)$, to conclude the result.
\end{proof}

Thus, given a 2-metric $\e2$ in a Lie groupoid $G\toto M$, there are induced metrics $\e1$ on $G$ and $\e0$ on $M$ and the following five maps are Riemannian submersions.
$$\xymatrix@1{ \G2 \ar@<0.5pc>[r]^{\pi_1} \ar[r]|m \ar@<-0.5pc>[r]_{\pi_2} & G \ar@<0.25pc>[r]^s \ar@<-0.25pc>[r]_t & M}$$ 
Moreover, the metrics $\e2$ and $\e1$ are preserved by the natural actions of the symmetric groups $S_3\action\G2$ and $S_2\action\G1$. 


Given $G\toto M$ a Lie groupoid and $\e1$ a 1-metric, it is natural to ask if there exists a 2-metric $\e2$ inducing $\e1$. If that is the case we will say that $\e2$ is an {\bf extension} of $\e1$. This {\it extension problem}, which consists on deciding whether there is an extension and if such an extension is unique, admit several different answers, depending on the groupoid. In the next section we will show with examples that one can have any of the following situations:
\begin{enumerate}[(i)]
\item there may exists a unique extension,
\item there may be more than one extensions, or
\item there may be no extension at all.
\end{enumerate}

\section{Examples of Riemannian groupoids}



\subsection{First examples}


For some Lie groupoids $G\toto M$ the extension problem from 1-metrics to 2-metrics has exactly a unique solution. Hence for these Lie groupoids our notion of Riemannian groupoid is essentially the same as the one introduced in \cite{gghr}.

\begin{example}[\'Etale groupoids]
In an \'etale Lie groupoid $G\toto M$ the manifolds $M,G,\G2$ all have the same dimension. If $\e1$ is a 1-metric on it, then it induces a 0-metric $\e0$, which is the same as a metric on $M$ invariant under the effect of $G$ (see Lemma \ref{0-metrics.on.etal}). For such a metric $\e1$ it is not hard to see that the pullback metrics $m^*\e1,\pi_1^*\e1,\pi_2^*\e1$ agree and give the only possible extension $\e2$.
\end{example}

More generally,
as we will show in the forthcoming paper of this series \cite{fdh1},
when working with foliation groupoids there exists exactly one extension of a $k$-metric $\e{k}$ to a $(k+1)$-metric $\e{k+1}$, for any $k\geq 1$.

\smallskip

%
%



Next we construct an example of a Lie group, viewed as a groupoid with a single object, on which there are many possible ways to extend the 1-metric.

\begin{example}
Let $G\toto M$ be the abelian group of vectors in the plane, so $G=\R^2$ and $M=\ast$. The canonical metric $\e1$ on $\R^2$ is a 1-metric for our groupoid. Given a metric $\e2$ on $\G2=\R^2\times\R^2$, let us write the matrix of its dual metric on the canonical basis at a point  $a=(x_1,x_2,y_1,y_2)\in\G2=\R^2\times\R^2$ by
$$\begin{bmatrix}
A & B^t \\
B & C
\end{bmatrix}.$$ 
Using coordinates $(z_1,z_2)$ on $\G1=\R^2$, the cotangent linear maps associated to $\pi_2,m,\pi_1:\G2\to\G1$ are given respectively by
$$
d(z_i)\overset{d_a\pi_2}\mapsto d(y_i) 		\qquad 
d(z_i)\overset{d_a m}\mapsto d(x_i)+d(y_i)	\qquad 
d(z_i)\overset{d_a\pi_1}\mapsto d(x_i)
$$
Thus, if $\e2$ is such that $\pi_2,m,\pi_1$ are Riemannian submersions, then the following equations should hold.
$$C=I \qquad A+B+B^t+C=I \qquad A=I$$
This leaves one degree of freedom, namely
$$B=
\begin{bmatrix}
-1/2 & -\beta \\
\beta & -1/2 
\end{bmatrix}$$ 
for some smooth function $\beta:\R^4\to\R$. In order to have a 2-metric on $\R^2$ we need further to require that $S_3\action\G2$ is an isometric action, which easily translates into the following functional equations on $\beta$:
$$\begin{cases}
\beta(x_1,x_2,y_1,y_2) &= -\beta(-y_1,-y_2,-x_1,-x_2)\\
\beta(x_1,x_2,y_1,y_2) &= \beta(y_1,y_2,-x_1-y_1,-x_2-y_2).
\end{cases}$$
Lastly, different 2-metrics $\e2$ extending our original $\e1$ arise from instance from $\beta\equiv 0$ and $\beta\equiv x_1y_1(x_1+y_1)$. Although we do not consider here 3-metrics, it is easy to see that for most choices of $\beta$, the metric $\e2$ cannot be extended to a 3-metric.
\end{example}



Roughly speaking, a 1-metric on a Lie groupoid leaves some freedom along the isotropies, but it completely describes the Riemannian structure on the longitudinal and the transversal directions. This phenomenon can be made more precise in the context of stacks, namely any two different extensions of a 1-metric induce the same metric on the quotient stack. We will pursue this point of view in part III of this series of papers \cite{fdh2}.

\smallskip

We now give an example of a Lie groupoid with a 1-metric which is not extendable to a 2-metric. 

%



\begin{example}
Let $G\toto M$ be a Lie group bundle, i.e., a groupoid where the source and target maps agree. We can think of such a groupoid as a smooth family of Lie groups, parametrized by the base manifold $M$. The orbits are just points and the normal representations are trivial. A 1-metric on a Lie group bundle $G\toto M$ is the same thing as a metric on $G$ transverse to the projection $s=t$ and invariant under the inversion map, so it is easy to construct such 1-metrics on any Lie group bundle (see Lemma \ref{every.submersion.is.Riemannian} and Proposition \ref{cotangent.principle}). However, such a 1-metric may not be extendable, and moreover, there may not exists a 2-metric at all.

For a concrete example, let $(G\toto M)=(\R^3\toto\R)$ be the Lie group bundle settled by
$$s(\lambda,x,\epsilon)=t(\lambda,x,\epsilon)=\epsilon
\qquad
m((\lambda,x,\epsilon),(\lambda',x',\epsilon))=(\lambda+\lambda',x+e^{\lambda \epsilon}x',\epsilon).$$
This can be seen as a 1-parameter family of Lie group structures $\R^2_\epsilon=(\R^2,\cdot_\epsilon)$, where $(\lambda,x)\cdot_\epsilon(\lambda',x')=(\lambda+\lambda',x+e^{\lambda \epsilon}x')$.
In other words, the group $\R^2_\epsilon$ is the semi-direct product $\R\ltimes\R$ under the action $\phi:\R\action\R$, $\phi_\lambda(x)=e^{\lambda \epsilon}x$.

As any Lie group bundle, $G$ admits a 1-metric, but it turns out that $G$ cannot be endowed with a 2-metric. We give now a direct proof of this, and we will provide a more conceptual explanation later. 

Assume that $\e2$ is a 2-metric, and endow $G,M$ with the induced metrics. Write $\pi:G\to M$ and $\pi':\G2\to M$ for the time projections, which are Riemannian submersions. For each $(\lambda,x)\in G_0$ let $(a(\lambda,x),b(\lambda,x),1)\in T_{(\lambda,x,0)}G$ be the orthogonal lift of $\partial_\epsilon\in T_0M$ along $\pi$, and let $w\in T_{\lambda,x,\lambda',x',0}\G2$ be the orthogonal lift of $\partial_\epsilon\in T_0M$ along $\pi'$.

From the equations $\pi\pi_1=\pi'$ and $\pi\pi_2=\pi'$ it follows that $$w=(a(\lambda,x),b(\lambda,x),a(\lambda',x'),b(\lambda',x'),1).$$
Now, the differential of the multiplication on the canonical basis at a central point has the form
$$d m_{(\lambda,x,\lambda',x',0)}=\begin{bmatrix}
1 & 0 & 1 & 0 & 0 \\
0 & 1 & 0 & 1 & x'\lambda\\
0 & 0 & 0 & 0 & 1
\end{bmatrix}$$
so from the equation $\pi m=\pi'$ the next conditions on $w$ arise:
$$\begin{cases}
\begin{matrix}
a(\lambda,x)+a(\lambda',x') & = & a(\lambda+\lambda',x+x')\\
b(\lambda,x)+b(\lambda',x')+\lambda x' & = & b(\lambda+\lambda', x+x').
\end{matrix}
\end{cases}$$
These equations should hold for any choice of $\lambda,x,\lambda',x'$, but it is easy to see that there are no such functions $a,b$. Note for instance that the right hand sides are symmetric and the left hand sides are not.
\end{example}

In fact, we will show later in Example \ref{ex:lie.alg.bundle}, as a consequence of the Linearization Theorem, that if a bundle of Lie groups admits a 2-metric, then its associated bundle of Lie algebras is locally trivial (see also Example \ref{ex:local:trivial:bundles} for a partial converse).
In the example above, the Lie algebra of $\R^2_\epsilon$ has generators $a,b$ and bracket $[a,b]_\epsilon=\epsilon b$, so it is a non-trivial deformation of the abelian Lie algebra of dimension 2, hence a 2-metric cannot exist.

%
%
%


\subsection{The gauge trick}


Every Lie group admits a 2-metric, hence it can be regarded as a Riemannian Lie groupoid. However, since 2-metrics are concerned with the transverse geometry of a Lie groupoid, this is not surprising. There is however one thing to be learned: the same recipe that allow us one to construct 2-metrics on Lie groups also works in several other situations. 


\begin{example}[Lie groups]
\label{ex:Lie:group}
Let $G$ be a Lie group, and let $\eta$ be a right invariant metric on $G$. If $\<,\>$ denontes the value of $\eta$ at the origin, then for any $g\in G$ and any $\phi,\psi\in T_g^*G$ we have
$$\eta^*_g(\alpha,\beta)=\<R_g\alpha,R_g\beta\> \qquad \alpha,\beta\in T_g^*G$$
The product metric $\eta\times\eta\times\eta$ on $G\times G\times G$ is a 2-metric on the pair groupod $G\times G\toto G$, and moreover, it is transversely invariant under the action of $G$ by right multiplication. Hence, it induces a 2-metric on the quotient groupoid, which is actually isomorphic to the group $G$ (cf. Lemma \ref{quotient.metrics} and Proposition \ref{quotient.metrics:maps}).

Note that the induced metric $\e1$ does not agree in general with the original metric $\eta$. A tedious computation shows that the resulting metrics $\e2$ and $\e1$ are given by:
\begin{align*}
(\e2)^*((\al_1,\al_2),(\be_1,\be_2))_{(g,h)} =&
\langle R_{g}^*\al_1, R_{g}^*\be_1\rangle + \langle R_{h}^*\al_2, R_{h}^*\be_2\rangle -\langle L_{g}^*\al_1, R_{h}^*\be_2\rangle -\\
&-\langle R_{h}^*\al_2, L_{g}^*\be_1\rangle +\langle L_{g}^*\al_1, L_{g}^*\be_1\rangle + \langle L_{h}^*\al_2, L_{h}^*\be_2\rangle. \\
(\e1)^*(\al,\be)_{g}=&
\langle R_{g}^*\al, R_{g}^*\be\rangle + \langle L_{g}^*\al, L_{g}^*\be\rangle. 
\end{align*}
\end{example}

\begin{example}[Locally trivial bundles of Lie groups]
\label{ex:local:trivial:bundles}
We can use the existence of 2-metrics on Lie groups to show that any locally trivial bundle of Lie groups admits a 2-metric. First, for a trivial bundle $M\times G\to M$ we can construct a 2-metric on the space of composable arrows $M\times G\times G\to M$: we chose a 2-metric $\eta^G$ in the Lie group $G$ and a metric $\eta^M$ in $M$, and we form the product metric $\eta^M\oplus\eta^G$. Then, for a locally trivial bundle $G\toto M$, we cover $M$ by trivializing open sets $U_\al$ and 2-metrics $\e2_\al$ on the restrictions $G|_{U_\al}$. If $\{\rho_\al\}$ is a partition of unity subordinated to the cover, then we obtain a 2-metric $\e2$ in $G\toto M$ by setting:
$$ \e2:=\left(\sum_{\al}\rho_\al (\e2_\al)^*\right)^*.$$ 
\end{example}


The 2-metric that we have constructed on a Lie group was obtained as the quotient metric of a suitable one in $G^3$. This can be generalized for any Lie groupoid, now considering the manifold $G^{[3]}\subset G^3$ of triples of arrows with the same source, and the map $\pi^{(2)}:G^{[3]}\to\G2$ given by $\pi^{(2)}(h_1,h_2,h_3)= (h_1h_2^{-1},h_2h_3^{-1})$. 
The fibers of $\pi^{(2)}$ coincide with the orbits of the right-multiplication action,
$$
\begin{matrix}
G^{[3]}\raction G\\
(h_1,h_2,h_3)\cdot k = (h_1k,h_2k,h_3k).
\end{matrix}
\qquad \qquad
\begin{matrix}
\xymatrix@R=10pt{\bullet & & \\ \bullet & \bullet \ar[lu]_{h_1} \ar[l]|{h_2} \ar[ld]^{h_3} & \bullet \ar[l]^k \\ \bullet }\end{matrix}
$$
and this action is free and proper, hence defining a principal $G$-bundle. 
The general estrategy will be to define a nice enough metric on $G^{[3]}$ in a way such that it can be pushed forward along $\pi^{(2)}$, and that the resulting metric is a 2-metric.

Notice that the group $S_3$ acts on the manifold $G^{[3]}$ by permuting its coordinates, and there are three left groupoid actions $G\action G^{[3]}$, each consisting in left multiplication on a given coordinate.
$$
\begin{matrix}
\xymatrix@R=10pt{\bullet & \bullet \ar[l]^k & \\ & \bullet & \bullet \ar[lu]_{h_1} \ar[l]|{h_2} \ar[ld]^{h_3}  \\ & \bullet }\end{matrix}
\qquad\qquad
\begin{matrix}
\xymatrix@R=10pt{  & \bullet & \\\bullet & \bullet \ar[l]|k & \bullet \ar[lu]_{h_1} \ar[l]|{h_2} \ar[ld]^{h_3}  \\ & \bullet }\end{matrix}
\qquad\qquad
\begin{matrix}
\xymatrix@R=10pt{  & \bullet &  \\ & \bullet & \bullet \ar[lu]_{h_1} \ar[l]|{h_2} \ar[ld]^{h_3}  \\\bullet & \bullet \ar[l]_k &}\end{matrix}
$$
These four actions commute with the above right action, and cover the actions $S_3\action\G2$, $G\action\G2$ used when defining 2-metrics.

\begin{remark}
The map $\pi^{(2)}:G^{[3]}\to\G2$ plays a key role for the groupoid. In fact, one can generalize this construction to any $k\geq 0$ in an obvious way, obtaining principal $G$-bundles $\pi^{(k)}:G^{[k+1]}\to\G{k}$, which are $S_{k}$-invariant. They together form the simplicial model for the universal principal $G$-bundle $EG\to BG$. We will come back to this when studying $n$-metrics on Lie groupoids in \cite{fdh1}.
\end{remark}


Emmulating what we have done for Lie groups, we can construct 2-metrics for more general Lie groupoids. The following is a first generalization.

\begin{proposition}\label{gauge.trick.1}
Let $G$ be a Lie group acting on the right on the manifolds $P,N$ and let $q:P\to N$ be an equivariant submersion. Assume that $G\action P$ is free and $G\action N$ is proper. Then the {\bf gauge groupoid} $(P\times_N P)/G\toto P/G$ admits a 2-metric.
\end{proposition}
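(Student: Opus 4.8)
The plan is to realize the space of composable arrows of the gauge groupoid as a quotient of an iterated fibered product of $P$ over $N$, and to obtain the $2$-metric by pushing down a symmetric, $G$-invariant pullback metric from upstairs, exactly in the spirit of the construction for Lie groups sketched above. Writing $P^{[k]}:=P\times_N\cdots\times_N P$ ($k$ copies) for the $k$-fold fibered product over $N$, equipped with the diagonal right $G$-action, one has canonical identifications of the low pieces of the nerve of $\mathcal G:=(P\times_N P)/G\toto P/G$, namely $\mathcal G^{(0)}=P/G$, $\mathcal G^{(1)}=P^{[2]}/G$ and $\mathcal G^{(2)}=P^{[3]}/G$. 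Under the last identification the three face maps $\pi_1,m,\pi_2:\mathcal G^{(2)}\to\mathcal G^{(1)}$ are induced by the three ``drop one coordinate'' projections $P^{[3]}\to P^{[2]}$, and the permutation action $S_3\action\mathcal G^{(2)}$ is covered by the permutation of the three factors of $P^{[3]}$; both the projections and the permutations are equivariant for the diagonal $G$-action. A preliminary observation is that $G\action P^{[k]}$ is free and proper: freeness is inherited from $G\action P$, while properness follows because $q$ is equivariant and $G\action N$ is proper, which forces the ``translation'' part of a converging net to stay in a compact subset of $G$.

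Next I would manufacture the upstairs metric. Since $G\action N$ is proper, $N$ carries a genuinely $G$-invariant metric $\eta^N$. Using properness of $G\action P$ I can average to obtain a $G$-invariant Ehresmann connection $H\subset TP$ for $q$ together with a $G$-invariant fiberwise metric on $\ker\d q$; declaring $H\perp\ker\d q$ and $H$ isometric to $q^*TN$ yields a $G$-invariant metric $\eta^P$ for which $q:P\to N$ is a Riemannian submersion (this refines Lemma \ref{every.submersion.is.Riemannian} by building in $G$-invariance). Now I iterate the pullback metric of Remark \ref{pullback metric}: as $q$ is a Riemannian submersion, each $P^{[k]}$ inherits a pullback metric for which every coordinate projection $P^{[k]}\to P$, and hence every ``drop one coordinate'' map $P^{[3]}\to P^{[2]}$, is a Riemannian submersion. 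Because this metric is assembled symmetrically out of the single metric $\eta^P$ and $\eta^N$ (explicitly, it takes the form $\sum_i\eta^P(v_i,\cdot)$ minus a multiple of $\eta^N$ along the common $q$-image), it is invariant under permuting the factors, and by naturality of the pullback construction together with the $G$-invariance of $\eta^P,\eta^N$ it is invariant under the diagonal $G$-action.

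Finally I would descend. Since the diagonal action is free and proper and the pullback metric on $P^{[3]}$ is $G$-invariant, hence transversely invariant, Lemma \ref{quotient.metrics} produces a quotient metric $\e2$ on $\mathcal G^{(2)}=P^{[3]}/G$ with the quotient map a Riemannian submersion, and likewise a metric $\e1$ on $\mathcal G^{(1)}$. Two checks then close the argument. For $S_3$-invariance, each permutation is a $G$-equivariant isometry of $P^{[3]}$, so Proposition \ref{quotient.metrics:maps} makes the induced diffeomorphism of $\mathcal G^{(2)}$ a Riemannian submersion, thus an isometry. For transversality, applying Proposition \ref{quotient.metrics:maps} to the equivariant Riemannian submersion $P^{[3]}\to P^{[2]}$ covering, say, $\pi_1$ shows that $\pi_1:\mathcal G^{(2)}\to\mathcal G^{(1)}$ is a Riemannian submersion, so $\e2$ is transverse to $\pi_1$; by the $S_3$-invariance just established this is equivalent to transversality to $\pi_2$ and to $m$. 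Hence $\e2$ is a $2$-metric.

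I expect the only genuinely delicate step to be the simultaneous construction of a $G$-invariant metric on $P$ for which $q$ is a Riemannian submersion, i.e. securing the $G$-invariant connection by averaging over the proper action; once $\eta^P$ is in hand, the fibered-product and quotient machinery of Remark \ref{pullback metric}, Lemma \ref{quotient.metrics} and Proposition \ref{quotient.metrics:maps} propagates symmetry, invariance and transversality essentially formally.
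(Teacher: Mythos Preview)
Your proposal is correct and follows essentially the same route as the paper: build a $G$-invariant, $q$-transverse metric $\eta^P$ on $P$ via an invariant Ehresmann connection and an invariant metric on $N$, take the pullback metric on the fibered products $P^{[k]}$ to get a $2$-metric on the submersion groupoid $P\times_N P\toto P$, and then quotient by the diagonal $G$-action using Lemma~\ref{quotient.metrics} and Proposition~\ref{quotient.metrics:maps}. Your write-up is in fact more explicit than the paper's about identifying $\mathcal G^{(2)}\cong P^{[3]}/G$, about why the $G$-action on $P$ (and on $P^{[k]}$) is proper, and about verifying the $S_3$-invariance and transversality downstairs; the paper compresses these checks into a single sentence and a citation.
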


\begin{proof}
We repeatedly make use of the fact that on a manifold with a proper action an equivariant metric can be constructed by means of a classic averaging argument.
Thus, by setting a preliminary equivariant metric on $P$, which exists for this action is also proper, we can construct an Ehresmann connection $H$ for $q:P\to N$ which is $G$-invariant: take just the orthogonal to the fibers. This way we have $TP=H\oplus F$.

Now, we will modify the preliminary metric by pullbacking to $H$ an equivariant metric on $N$. The resulting metric $\eta^P$ is both $q$-fibred and $G$-invariant

The induced metric $\e2$ on $P\times_N P\times_N P$ make the submersion groupoid $P\times_N P\toto P$ a Riemannian groupoid, and since $\e2$ is $G$-invariant, it induces a quotient metric which yields the desired structure on the gauge groupoid (cf. Lemma \ref{quotient.metrics} and Proposition \ref{quotient.metrics:maps}).
\end{proof}


Let us derive now some immediate corollaries.

\begin{example}[Transitive groupoids]
Every transitive groupoid can be obtained as the gauge groupoid over some free proper action $G\action P$, or equivalently, over some principal $G$-bundle $P\to M$. Hence, by letting $N=\ast$ in the previous proposition, we recover a recipe to construct 2-metrics on any transitive groupoid.
\end{example}

\begin{example}[Proper actions]\label{ex:proper-actions}
If $G\action M$ is a Lie group acting over a manifold, then it is easy to see that the corresponding action groupoid $G\ltimes M\toto M$ is isomorphic to the gauge construction over the projection $G\times M\to M$, where $G$ acts over the product $G\times M$ by $g\cdot(k,x)=(kg^{-1},gx)$. Thus, by the proposition, we may conclude that every Lie groupoid arising from a proper Lie group action admits a 2-metric. Moreover, we can take as the auxiliary metric $\eta^P$ appearing in \ref{gauge.trick.1} the product of a right invariant metric on $G$ and a $G$-equivariant metric on $M$.
\end{example}

We will refer to the construction behind Proposition \ref{gauge.trick.1} as the {\it gauge trick}.






\subsection{2-metrics on proper groupoids}


Recall that a groupoid $G\toto M$ is proper if the anchor map $\rho:G\to M\times M$, $g\mapsto (t(g),s(g))$ is a proper map. In this section we will prove that every Hausdorff proper groupoid admits a 2-metric, which is a groupoid version of the well-known fact that every manifold admits a metric, or that every orbifold admits a metric. We will construct such a 2-metric by adapting the \emph{gauge trick}, introduced in last section.
The delicate point here is that a groupoid action does not lift to an action on the tangent/cotangent bundle. There is however a tangent/cotangent {\it quasi-action}, which will allow us to apply averaging methods to produce 2-metrics. We have collected the relevant material about quasi-actions, tangent lifts and averaging in the Appendix \ref{appendix:averaging}.

\medskip


Let $G\toto M$ be a Lie groupoid and $\sigma$ a connection on it. We consider an action $G\action E$ and its cotangent lift $(G\ltimes E)\tilde\action T^*E$. In order to simplify the notation we will write just $ge=\theta_g(e)$, $gv=T_\sigma\theta_{(g,e)}(v)$ and so on.

Given a metric $\eta$ on $E$, we can view it as a section of the second symmetric power, say $\eta\in \Gamma(E,S^2(T^*E))$, and analogously $\eta^*\in\Gamma(E,S^2(TE))$.
Then the metric $\eta$ is transversely invariant if and only if for each orbit $O\subset E$ the section $\eta^*|_{TO^\circ\times TO^\circ}\in\Gamma(O,S^2(TE/TO))$ is an invariant section for the corresponding lifted action (cf. Proposition \ref{tan.lift.nor.representation}). 

Now suppose that $G\toto M$ is proper, so we can fix $\mu$ a Haar density on it, and consider its associated averaging operators (cf. Definition \ref{def:aver.oper}).

\begin{definition}
Let $\theta:G\action E$ be a groupoid action, and $\eta$ a metric on $E$. Its {\bf cotangent average} $\tilde\eta\in \Gamma(E,S^2(T^*E))$ is defined by averaging its dual, say
$$(\tilde\eta)^*_e(\alpha,\beta):=I_\theta(\eta^*)_e(\alpha,\beta)=
\int_{G(-,x)} \eta^*_{ge}(g\alpha,g\beta) \mu^x(g),$$
where $I_\theta$ is the operator on $\Gamma(E,S^2(TE))$, $x=q(e)$, \smash{$y\xfrom g x$}, and $\alpha,\beta\in T_e^*E$.
\end{definition}


The following proposition plays a key role in the paper:

\begin{proposition}\label{cotangent.average}
Given $\eta$ a metric on $E$, then:
\begin{enumerate}[(i)]
 \item Its cotangent average $\tilde\eta$ is a transversely $G$-invariant metric on $E$.
 \item If $\eta$ is already transversely $G$-invariant, then $\eta$ and $\tilde\eta$ agree in the directions normal to the orbits.
\end{enumerate}
\end{proposition}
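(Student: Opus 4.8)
The plan is to treat the two claims separately, in each case exploiting the fact that, although $\tilde\eta$ is built from the connection-dependent cotangent quasi-action, its restriction to a conormal bundle $TO^\circ$ only sees the genuine conormal representation $\lambda^*$, so that the classical Haar-averaging machinery of the Appendix applies. First I would check that $\tilde\eta$ is a bona fide metric. Symmetry and smoothness of $(\tilde\eta)^*$ follow from the corresponding properties of the averaging operator $I_\theta$. For positive-definiteness, fix $e\in E$ and a nonzero covector $\alpha\in T_e^*E$; since for each arrow \smash{$y\xfrom g x$} the cotangent lift $\alpha\mapsto g\alpha$ is a linear isomorphism $T_e^*E\to T_{ge}^*E$, the integrand $\eta^*_{ge}(g\alpha,g\alpha)$ is strictly positive, and integrating a positive function against the positive density $\mu^x$ over $G(-,x)$ yields $(\tilde\eta)^*_e(\alpha,\alpha)>0$. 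Hence $(\tilde\eta)^*$, and therefore its dual $\tilde\eta$, is positive-definite.

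For part (i) I would invoke the characterization recorded in Proposition \ref{tan.lift.nor.representation}: the metric $\tilde\eta$ is transversely invariant precisely when, for each orbit $O$, the section $(\tilde\eta)^*|_{TO^\circ\times TO^\circ}\in\Gamma(O,S^2(TE/TO))$ is invariant under the normal representation. The key point is that, although the cotangent lift used to define $\tilde\eta$ depends on the auxiliary connection $\sigma$, its restriction to the conormal bundle $TO^\circ$ is exactly the connection-independent conormal representation $\lambda^*$. Consequently $(\tilde\eta)^*|_{TO^\circ}=I_\theta(\eta^*)|_{TO^\circ}$ is the average, over a \emph{genuine} groupoid action, of $\eta^*|_{TO^\circ}$, and the defining property of $I_\theta$ — that averaging against an invariant Haar density produces an invariant section — furnishes the required invariance.

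Part (ii) is the companion idempotency statement. If $\eta$ is already transversely invariant then, again by Proposition \ref{tan.lift.nor.representation}, the section $\eta^*|_{TO^\circ}$ is invariant for the conormal representation. Since on $TO^\circ$ the operator $I_\theta$ restricts to genuine Haar-averaging, and genuine averaging fixes invariant sections, we obtain $(\tilde\eta)^*|_{TO^\circ}=\eta^*|_{TO^\circ}$. Dualizing, $\tilde\eta$ and $\eta$ induce the same inner product on each normal space $\nu(O)=TE/TO$, which is exactly the assertion that they agree in the directions normal to the orbits.

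The main obstacle I anticipate is the identification, along $TO^\circ$, of the connection-dependent cotangent quasi-action with the genuine conormal representation $\lambda^*$; this is what lets us bypass the \emph{quasi} nature of the cotangent lift and legitimately apply the standard averaging theory. Everything else reduces to the formal properties of the averaging operator $I_\theta$ (smoothness, preservation of positivity, production of invariant sections, and fixing of invariant sections) established in the Appendix.
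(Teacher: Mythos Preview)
Your proposal is correct and follows essentially the same route as the paper: both arguments reduce to observing, via Proposition~\ref{tan.lift.nor.representation}, that along $TO^\circ$ the cotangent quasi-action coincides with the genuine conormal representation, so that the restriction of $I_\theta(\eta^*)$ to $TO^\circ\times TO^\circ$ is the Haar average of $\eta^*|_{TO^\circ\times TO^\circ}$ for an honest action, and then Proposition~\ref{prop:aver.oper}(i)--(ii) finish both parts. The only cosmetic difference is that the paper makes the step ``restricting to $TO^\circ$ commutes with averaging'' explicit by packaging the restriction as an equivariant bundle map $S^2(TE|_O)\to S^2(TE|_O/TO)$ and invoking Proposition~\ref{prop:aver.oper}(iv), whereas you argue it directly from the pointwise description of the integral; both are valid.
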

\begin{proof}
It is easy to see that the section $\tilde\eta$ is positive definite, hence a metric. To see that it is transversely $G$-invariant, let $O\subset E$ be an orbit, and consider the following vector bundle map:
$$S^2(TE|_O)\to S^2(TE|_O/TO) \qquad \eta^* \mapsto \eta^*|_{TO^\circ\times TO^\circ}.$$
This is surjective and equivariant, where we endow $S^2(TE|_O)$ with the quasi-action induced by the tangent lift, and $S^2(TE|_O/TO)$ with the conormal representation (cf. Proposition \ref{tan.lift.nor.representation}).
By Proposition \ref{prop:aver.oper}(iv) the metric $\tilde\eta|_{TO^\circ\times TO^\circ}$ agrees with the averaging of the restriction $\eta^*|_{TO^\circ\times TO^\circ}$ with respect to the conormal representation. This yields an invariant section by \ref{prop:aver.oper}(i), showing that $\tilde\eta$ is transversely $G$-invariant.
The last statement readily follows from \ref{prop:aver.oper}(ii).
\end{proof}

Notice that if the action $\theta:G\action E$ is free and proper, hence defining a principal $G$-bundle, then the push-forward metric $q_*\tilde\eta$ of the cotangent average along $q:E\to E/G$ is a well-defined metric in the quotient, and if $\eta$ is already transversely $G$-invariant, then we have $q_*\tilde\eta=q_*\eta$.


Let us show now that the cotangent average behaves well with respect to Riemannian equivariant submersions.

\begin{proposition}
\label{prop:cot.ave.&.submersions}
Let $p:(E,\eta^E)\to (B,\eta^B)$ be a Riemannian submersion which is equivariant for actions $\theta^E:G\action E$ and $\theta^B:G\action B$. If $\tilde\eta^E,\tilde\eta^B$ denote the cotangent averages of the metrics, then $p:(E,\tilde\eta^E)\to(B,\tilde\eta^B)$ is also a Riemannian submersion.
\end{proposition}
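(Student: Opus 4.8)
The plan is to reduce everything to the cotangent characterization of Riemannian submersions recalled in the Background subsection: a submersion $p\colon(E,\eta^E)\to(B,\eta^B)$ is Riemannian if and only if for every $e\in E$, with $b=p(e)$ and fiber $F=p^{-1}(b)$, the pullback $(\d_e p)^*\colon T_b^*B\to T_eF^\circ$ is an isometry for the dual metrics. Since this map is always a linear isomorphism onto $T_eF^\circ$, the condition is equivalent to the single pointwise identity
$$(\eta^E)^*_e\big((\d_e p)^*\phi,(\d_e p)^*\psi\big)=(\eta^B)^*_b(\phi,\psi),\qquad \phi,\psi\in T_b^*B. \quad(\ast)$$
By hypothesis $(\ast)$ holds for the pair $(\eta^E,\eta^B)$ at every point, and I want to deduce the same identity for $(\tilde\eta^E,\tilde\eta^B)$.

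The crucial ingredient is the naturality of the cotangent lift with respect to the equivariant map $p$. I take \emph{equivariant} to include that $p$ covers the identity on $M$, so that $q_B\circ p=q_E$ and the two cotangent averages are taken over the same arrow spaces $G(-,x)$. I claim that for each arrow $g$ with source $x=q_E(e)$ one has
$$g\cdot(\d_e p)^*\phi=(\d_{ge}p)^*(g\cdot\phi)\qquad\text{in }T^*_{ge}E, \quad(\star)$$
where $g\cdot$ denotes the relevant cotangent lifts on $E$ and $B$. This follows from the analogous naturality of the tangent lift: using one fixed connection $\sigma$ on $G$ for both actions, differentiating the equivariance relation $p\circ\theta^E_g=\theta^B_g\circ p$ gives $\d_{ge}p\circ T_\sigma\theta^E_{(g,e)}=T_\sigma\theta^B_{(g,p(e))}\circ\d_e p$, because the tangent lift is assembled from $\d\theta$ and the horizontal lift $\sigma_g$ applied to $\d q_E=\d q_B\circ\d p$. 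Dualizing through the defining pairing $\langle g\alpha,T_\sigma\theta_g v\rangle=\langle\alpha,v\rangle$ then yields $(\star)$.

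With $(\star)$ in hand the computation is immediate. Expanding the cotangent average and using $(\star)$,
$$(\tilde\eta^E)^*_e\big((\d_e p)^*\phi,(\d_e p)^*\psi\big)=\int_{G(-,x)}(\eta^E)^*_{ge}\big((\d_{ge}p)^*(g\phi),(\d_{ge}p)^*(g\psi)\big)\,\mu^x(g).$$
Applying the hypothesis $(\ast)$ for $(\eta^E,\eta^B)$ at the point $ge$, and noting that equivariance gives $p(ge)=g\cdot b$ with $b=p(e)$, the integrand becomes $(\eta^B)^*_{g\cdot b}(g\phi,g\psi)$. Since $q_B(b)=x$, the resulting integral is exactly the definition of the cotangent average of $\eta^B$ at $b$, hence equals $(\tilde\eta^B)^*_b(\phi,\psi)$. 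This is $(\ast)$ for $(\tilde\eta^E,\tilde\eta^B)$, so $p$ is a Riemannian submersion for the averaged metrics. Conceptually, the argument shows that pullback of dual metrics along $p$ commutes with the cotangent averaging operator, precisely because the cotangent lift is natural.

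The main obstacle is the naturality identity $(\star)$: it is the only place where the connection-dependent nature of the (co)tangent lift enters, and it is essential both to use the \emph{same} connection $\sigma$ on $E$ and on $B$ and to check that $p$ being over $\id_M$ makes the moment-map data match, so that the two averaging integrals run over the same fibers $G(-,x)$. Once this bookkeeping is settled everything else is a formal manipulation of the integral together with the pointwise dual-metric characterization. This naturality should be recorded as part of the appendix material on tangent lifts, in the same spirit as the naturality already used in Remark~\ref{rmk:naturality.normal-representation} and Proposition~\ref{tan.lift.nor.representation}.
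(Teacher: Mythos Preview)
Your proof is correct and is essentially the paper's own argument: both compute the dual-metric identity $(\ast)$ for the averaged metrics by unfolding the integral and using the naturality $(\star)$ of the cotangent lift under the equivariant map $p$ (which is precisely Proposition~\ref{naturality.tangent.lift}(i) dualized) together with the original Riemannian-submersion identity at the translated point $ge$. The only difference is cosmetic: the paper runs the chain of equalities starting from $(\tilde\eta^B)^*_b(\alpha,\beta)$ and ending at $(\tilde\eta^E)^*_e((\d_e p)^*\alpha,(\d_e p)^*\beta)$, while you read it in the opposite direction and spell out the justification of $(\star)$ more carefully.
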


\begin{proof}
Given $e\in E$ and $\alpha,\beta\in T_b^*B$, where $b=p(e)$, by using first that $p$ is Riemannian and then that is equivariant, we get the following chain of equalities,
\begin{align*}
(\tilde\eta^{B}_b)^*(\alpha,\beta)&=\int_{G(-,x)} (\eta_{gb}^{B})^*(g\alpha,g\beta)\mu^x(g) \\
&=\int_{G(-,x)} (\eta_{ge}^{E})^*((\d_{ge} p)^*g\alpha,(\d_{ge} p)^*g\beta)\mu^x(g)\\
&=\int_{G(-,x)} (\eta_{ge}^{E})^*(g(\d_e p)^*\alpha,g(\d_e p)^*\beta)\mu^x(g)\\
&=(\tilde\eta_e^{E})^*((\d_e p)^*\alpha,(\d_e p)^*\beta),
\end{align*}
from which we conclude that $p$ is also Riemannian for the averaged metrics. 
\end{proof}


%
%


%

Now we have collected all the preliminaries needed to establish our first fundamental theorem.

\begin{theorem}\label{thm:main:1}
Every Hausdorff proper groupoid $G\toto M$ admits a 2-metric $\e2$.
\end{theorem}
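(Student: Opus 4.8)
The plan is to adapt the \emph{gauge trick} of Proposition \ref{gauge.trick.1} to the groupoid setting, replacing the genuine $G$-action used there (which permitted classical averaging) by the cotangent quasi-action and the averaging operator $I_\theta$ associated to a Haar density, exactly as set up in Proposition \ref{cotangent.average}. Recall that for a Lie group $G$ the $2$-metric was obtained by pushing forward a well-chosen product metric on $G^{[3]}$ along the principal bundle $\pi^{(2)}:G^{[3]}\to\G2$. For a general proper groupoid $G\toto M$ the analogous object is the manifold $G^{[3]}$ of triples of arrows sharing a source, together with the map $\pi^{(2)}:G^{[3]}\to\G2$, $(h_1,h_2,h_3)\mapsto(h_1h_2^{-1},h_2h_3^{-1})$, whose fibers are the orbits of the free and proper right-multiplication action $G^{[3]}\raction G$. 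So my target is to build a metric $\eta$ on $G^{[3]}$ that (a) is transverse to this right $G$-action, so it descends to a metric $\e2$ on $\G2$, and (b) is invariant under both the permutation action $S_3\action G^{[3]}$ and the three commuting left $G$-actions, so that the descended metric is $S_3$-invariant and transversely $\theta^i$-invariant, i.e.\ genuinely a $2$-metric.

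First I would produce a preliminary metric on $G^{[3]}$ with the right transversality. The map $\pi^{(2)}$ is a surjective submersion, so by Lemma \ref{every.submersion.is.Riemannian} there is a $\pi^{(2)}$-transverse metric; equivalently, since the fibers of $\pi^{(2)}$ are the orbits of the free proper right action, any transversely invariant metric for $G^{[3]}\raction G$ works (Lemma \ref{quotient.metrics}). The issue is symmetry: this preliminary metric need not be invariant under $S_3$ or under the left $G$-actions. I would fix the finite symmetry cheaply by cotangent-averaging over $S_3$, which preserves transversality of a finite collection of transverse metrics (the Proposition on cotangent averages) and leaves $\pi^{(2)}$-transversality intact because $S_3$ interchanges the right-multiplication orbits among themselves. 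The genuine difficulty is the left $G$-invariance: the left $G$-actions on $G^{[3]}$ are only proper after quotienting, and more to the point the whole reason for working on the cotangent bundle is that a groupoid action has no tangent lift, only the cotangent quasi-action of the Appendix. So here I would invoke Proposition \ref{cotangent.average}: replacing $\eta$ by its cotangent average $\tilde\eta$ with respect to the relevant left groupoid actions produces a transversely $G$-invariant metric that, crucially, agrees with $\eta$ in the directions normal to the orbits, so transversality to $\pi^{(2)}$ is not destroyed.

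The cleanest route, which I would try to carry out, is to realize $G^{[3]}$ itself as arising from a gauge-type construction to which Proposition \ref{cotangent.average} and Proposition \ref{prop:cot.ave.&.submersions} apply directly. Concretely, I would start from an arbitrary metric on $G^{[3]}$, cotangent-average it with respect to the commuting left $G$-actions to obtain a transversely $G$-invariant metric, then average over the finite group $S_3$, and check that the three left projections $G^{[3]}\to G$ and the permutations remain Riemannian submersions / isometries for the averaged metric. Proposition \ref{prop:cot.ave.&.submersions} is the key compatibility tool here: it guarantees that cotangent averaging commutes with equivariant Riemannian submersions, so the submersion conditions defining a $2$-metric are preserved under the averaging. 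Once $\tilde\eta$ on $G^{[3]}$ is transverse to $\pi^{(2)}$ and carries all the required symmetries, I push it forward to $\e2:=\pi^{(2)}_*\tilde\eta$ on $\G2$; the $S_3$-invariance and the transverse $\theta^i$-invariance of $\e2$ follow because these symmetries on $\G2$ are covered by the corresponding symmetries on $G^{[3]}$ (as recorded in the discussion preceding the theorem), and quotient metrics transport equivariance along equivariant Riemannian submersions (Proposition \ref{quotient.metrics:maps}).

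The main obstacle I anticipate is precisely the interaction between the \emph{left} groupoid actions, whose cotangent averaging uses the quasi-action, and the \emph{right} action along whose orbits we must stay transverse. I must ensure that averaging over the left actions does not spoil $\pi^{(2)}$-transversality; this is exactly where part (ii) of Proposition \ref{cotangent.average} (``$\eta$ and $\tilde\eta$ agree normal to the orbits'') is indispensable, and I would need to arrange the orbit directions of the left and right actions so that the normal-to-the-orbit directions for $\pi^{(2)}$ are left untouched by the averaging over the left actions. A second, more technical point is that Hausdorffness of $G$ is used to guarantee that $G^{[3]}$ is a genuine (Hausdorff) manifold and that the Haar density and the averaging integrals behave well; I would flag where properness supplies the Haar density $\mu$ and where Hausdorffness is actually needed. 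Granting these, the conclusion that $\e2$ is a $2$-metric, and hence that $(G\toto M,\e2)$ is a Riemannian groupoid, follows from assembling the pieces above.
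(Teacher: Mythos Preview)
Your overall framework---gauge trick via $\pi^{(2)}:G^{[3]}\to\G2$ combined with cotangent averaging---is the right one, but you have inverted the roles of the left and right $G$-actions, and this creates precisely the obstacle you flag and do not resolve. The paper never averages over the left actions. It starts from an $s$-transverse metric $\eta^{[1]}$ on $G$ and equips each $G^{[k]}$ with the \emph{pullback metric} $\eta^{[k]}$ of Remark~\ref{pullback metric}. This single choice already makes every face map $G^{[k]}\to G^{[k-1]}$ a Riemannian submersion and makes $S_{k}$ act on $G^{[k]}$ by isometries, for free. The only cotangent average performed is over the \emph{right} action $G^{[k]}\raction G$: by Proposition~\ref{cotangent.average}(i) the averaged metric $\tilde\eta^{[k]}$ becomes $\pi^{(k-1)}$-transverse, and by Proposition~\ref{prop:cot.ave.&.submersions} the face maps (which are right-equivariant) remain Riemannian submersions. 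Pushing forward along $\pi^{(k-1)}$ and invoking Proposition~\ref{quotient.metrics:maps} on the face maps then yields the $k$-metric on $\G{k}$ directly.

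Your plan instead begins with a $\pi^{(2)}$-transverse metric and then averages over the left actions and $S_3$. The difficulty you isolate---that left-averaging might destroy $\pi^{(2)}$-transversality---is genuine, and Proposition~\ref{cotangent.average}(ii) does not address it: that statement says $\tilde\eta$ agrees with $\eta$ on the conormal to the orbits of the action \emph{being averaged over}, whereas $\pi^{(2)}$-transversality is a condition on the conormal to the orbits of the \emph{right} action. These conormal bundles do not coincide, so nothing in the proposition prevents left-averaging from spoiling right-transversality. You would also owe an argument that successive averagings over the three left actions (and then $S_3$) do not undo one another's transverse invariance. The paper's device of the pullback metric bypasses all of this: the $S_3$-symmetry and the face-map Riemannian-submersion structure are built in from the outset, so a single averaging over the right action suffices.
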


\begin{proof}
Endow the manifold $G$ with a Riemannian structure $\eta^{[1]}$ transverse to the source map $G\to M$. For each $k=1,2,3,\dots$ endow the $k$-fold pullback along the source map $G^{[k]}$ with the corresponding pullback metric $\eta^{[k]}$ (cf. \ref{pullback metric}). Then every face map of the submersion groupoid arising from the source is a Riemannian submersion.
$$\xymatrix@1{
\cdots \,
G^{[3]} \ar@<0.5pc>[r]^{\pi_1} \ar[r]|{m} \ar@<-0.5pc>[r]_{\pi_2} &
G^{[2]} \ar@<0.25pc>[r]^{s} \ar@<-0.25pc>[r]_{t} & 
G^{[1]}}$$ 
Moreover, each of these Riemannian submersions is equivariant for the right action $G^{[k]}\raction G$. Thus, after replacing each metric $\eta^{[k]}$ by its cotangent average $\tilde\eta^{[k]}$ we still have that every face map is a Riemannian submersion (cf. Proposition \ref{prop:cot.ave.&.submersions}).
But now we can push-forward each metric $\eta^{[k+1]}$ through the quotient map $G^{[k+1]}\to\G{k}$ and obtain a (fully extendable) $k$-metric on $G\toto M$. In particular, for $k=2$, we obtain the desired 2-metric.
\end{proof}

Notice that the proof shows that the 2-metric is a {\it simplicial metric}, i.e., it extends to an $n$-metric, for all $n\in\N$. We will come back to this notion of metrics in the forthcoming paper \cite{fdh1}.


\section{Linearization of Riemannian Groupoids}
\label{section.linearization}

\subsection{The linearization problem}
\label{subsec:linearization}


Let $G\toto M$ be a Lie groupoid, and let $S\subset M$ be an embedded saturated submanifold of codimension $q$, i.e.,  $S$ is a submanifold which is a union of orbits of $G$. We denote by $G_S\subset G$ the set of arrows whose source and target belong to $S$:
$$ G_S=s^{-1}(S)=t^{-1}(S). $$
Note that $G_S$ is an embedded submanifold of the same codimension $q$ as $S$. We denote by $G_S^{(k)}\subset G^{(k)}$ the set of $k$-tuples of composable arrows in $G_S$, which is again an embedded submanifold of codimension $q$.


For any saturated submanifold of $G\toto M$, there is a {\bf local linear model} for $G$ around  $S$. It can be defined the groupoid-theoretic 
normal bundle:
$$\xymatrix{ \nu(G_S) \ar@<0.25pc>[r] \ar@<-0.25pc>[r]  \ar[d] & \nu(S) \ar[d] \\
G_S \ar@<0.25pc>[r] \ar@<-0.25pc>[r] & S}$$ 
Its objects and arrows are given by the total spaces of the normal bundles $\nu(S)=T_SM/TS$ and $\nu(G_S)=T_{G_S}G/TG_S$, respectively. The structural maps $s,t,m,u,i$ are induced by the total differential of those of $G\toto M$. Notice that this local linear model depends only on the 
linear infinitesimal data around $S$.

\begin{remark}\label{rmk:core-zero}
The structure maps of the local linear model $\nu(G_S)\toto\nu(S)$ are vector bundle maps and fiberwise isomorphisms. One consequence is that $\nu(G_S)$ identifies with the pullback $G_S \times_S \nu(S)$ along the source map, yielding a representation
$$G_S \times_S \nu(S)\cong \nu(G_S) \xto t \nu(S).$$
When $S$ is an orbit this recovers the normal representation recalled in Section \ref{sub:inv.metrics}.
Another consequence, that will play a key role later, is that the pairs of composable arrows of the local linear model $\nu(G_S)^{(2)}$ canonically identifies with the normal bundle \smash{$\nu(G_S^{(2)})$}, for it can be seen as the top square of a cartesian cube.
\end{remark}


The linearization problem consists on establishing an isomorphism between the local model and the original groupoid on suitable neighborhoods. There are however several possibilities for this choice of neighborhoods, as we now explain. 

\begin{definition}
Let $G\toto M$ be a Lie groupoid and let $S\subset M$ be a saturated submanifold. A {\bf groupoid neighborhood} of $G_S\toto S$ in $G\toto M$ is a pair of open subsets $S\subset U\subset M$ and $G_S\subset \tilde{U} \subset G$ such that $\tilde{U}\toto U$ is a subgroupoid of $G\toto M$. Such a groupoid neighborhood is called {\bf full} if: 
$$\tilde{U}=G_U=s^{-1}(U)\cap t^{-1}(U).$$ 
\end{definition}

We note that for proper Lie groupoids we have:

\begin{lemma}
\label{lem:full:neighborhood}
For a proper Lie groupoid, a groupoid neighborhood always contains a full neighborhood.
\end{lemma}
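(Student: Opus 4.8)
The plan is to reduce the statement to a purely topological separation problem and then solve that problem using properness. Write $\tilde U\toto U$ for the given groupoid neighborhood and set $K:=G\setminus\tilde U$, a closed subset of $G$. Since $\tilde U$ is a subgroupoid containing $G_S$ and $S$ is saturated, no arrow of $K$ can have its source, or equivalently its target, in $S$; that is, $K$ is disjoint from $\rho^{-1}((S\times M)\cup(M\times S))=G_S$. Because $G$ is proper the anchor $\rho=(t,s):G\to M\times M$ is a closed map, so $F:=\rho(K)$ is closed in $M\times M$ and disjoint from $(S\times M)\cup(M\times S)$. Consequently $N:=(M\times M)\setminus F$ is open, contains $(S\times M)\cup(M\times S)$, and satisfies $\rho^{-1}(N)\subset\tilde U$. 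The whole problem is now reduced to producing an open set $V$ with $S\subset V\subset U$ and $V\times V\subset N$: for such a $V$ the set $G_V=s^{-1}(V)\cap t^{-1}(V)=\rho^{-1}(V\times V)$ is automatically a subgroupoid contained in $\rho^{-1}(N)\subset\tilde U$, and $G_V\toto V$ is then the desired full groupoid neighborhood inside the given one.

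To build $V$ I would use properness once more, in a local form: for every $x\in S$ and every compact $C\subset M$ there is an open $A\ni x$ such that no arrow of $K$ has source in $A$ and target in $C$. Indeed, otherwise one would find $g_n\in K$ with $s(g_n)\to x$ and $t(g_n)\in C$, so that $\rho(g_n)$ lies in a compact set; by properness a subsequence of the $g_n$ converges to some $g\in K$ with $s(g)=x\in S$, contradicting $K\cap G_S=\emptyset$. Starting from this, I would assemble the local pieces into a single tube around $S$ of variable width. Fix a metric $d$ on $M$ and, using paracompactness of $M$, a locally finite cover of $S$ by relatively compact opens and a subordinate partition of unity, choose a continuous function $r:M\to(0,\infty)$ and set $V:=\{x\in U: d(x,S)<r(x)\}$. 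The function $r$ is to be chosen so that, near each point of $S$, its value stays below the distance from $F$ to the relevant slices, which is positive by closedness of $F$; this forces $V\times V\subset N$.

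The main obstacle is that $G$ is only assumed proper, not source proper, so the source map $s$ is not proper and an arrow of $K$ may have its source converging into $S$ while its target escapes to infinity, possibly approaching $S$ at infinity. This is precisely the phenomenon that makes the \emph{saturated} linearization require source properness, and it shows that neither a saturated $V$ nor a tube of uniform width can work: a saturated neighborhood would necessarily contain both endpoints of such a runaway arrow. The resolution is that $V$ must pinch along $S$, and the delicate point is to verify that the pinching radius $r$ can be taken continuous and strictly positive. This is exactly where the closedness of $F=\rho(K)$ is essential: over any bounded region the bad pairs stay a definite distance away from $(S\times M)\cup(M\times S)$, so a positive continuous $r$ bounding this distance from below exists by the partition-of-unity argument, while far out along $S$ the radius is allowed to shrink towards zero, cutting off the runaway arrows.
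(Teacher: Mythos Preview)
Your reduction is correct and is precisely the idea behind the paper's proof: both arguments exploit that $\rho=(t,s)$ is a closed map, so that the ``bad'' set $F=\rho(G\setminus\tilde U)$ is closed in $M\times M$ and misses $(S\times M)\cup(M\times S)$, and the problem becomes producing an open $V\supset S$ with $V\times V$ disjoint from $F$. The paper does not isolate $F$ quite as cleanly as you do, but it uses closedness of $\rho$ in exactly the same way when it argues that the sets $C_n$ are closed.

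Where your proposal falls short is in the actual construction of $V$. You describe a tube $V=\{x\in U:d(x,S)<r(x)\}$ and assert that a suitable positive continuous $r$ exists by a partition-of-unity argument, but you never produce $r$, and the condition you need --- that for every $(a,b)\in F$ either $d(a,S)\geq r(a)$ or $d(b,S)\geq r(b)$ --- is nonlocal in $V$ and not obviously reducible to bounding $r$ by a local ``distance from $F$ to the relevant slices.'' Your local lemma (for each $x\in S$ and compact $C$ there is $A\ni x$ with no $K$-arrow from $A$ to $C$) is fine, but the passage from these local data to a single continuous $r$ with the global product property is exactly the nontrivial step, and it is left as a promise. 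The paper fills this gap with a concrete countable cover-and-shrink: choose relatively compact $V_n$ covering $S$ with $G_{V_n}\subset\tilde U$, then set $V'_n=V_n\setminus C_n$ where $C_n$ is the (closed, disjoint from $S$) set of endpoints of $K$-arrows whose other endpoint lies in $\bigcup_{i<n}\overline{V}_i$; then $V=\bigcup_n V'_n$ works. This is the argument you still owe, and once you write it down you will see that it is essentially the paper's proof in your $F$-language, with the symmetry of $F$ (coming from $\tilde U$ being closed under inversion) playing the role that you have not yet invoked.
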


\begin{proof}
Let $G\toto M$ be a proper groupoid, let $S\subset M$ be a saturated submanifold, and let $\tilde{U}\toto U$ be a groupoid neighborhood of $G_S\toto S$. 

We start by observing that each $x\in S$ has a neighborhood $V_x$ in $M$ such that $G_{V_x}\subset \tilde{U}$. This is because $G_x$ is the fiber over $(x,x)$ of the proper map $\rho=(t,s):G\to M\times M$, and therefore, the open $\tilde{U}$ that contains $G_x$ must also contain an open tube $(t,s)^{-1}(V_x\times V_x)=G_{V_x}$.
Since $S$ is second countable, we can find a countable family $\{V_n\}$ with $G_{V_n}\subset \tilde{U}$ and such that $S\subset \bigcup_{n=1}^\infty V_n$. We can further assume that $\overline{V}_n$ is compact for all $n$.

The naive tentative would be to take the full neighborhood given by $\bigcup_{n=1}^\infty V_n$, but as a matter of fact, there may be arrows from some $V_i$ to some $V_j$ not contained in $\tilde U$.
To solve this we shrink each $V_n$ by defining $V'_n=V_n-C_n$,
where
\begin{align*} 
C_n:&=\pi_1((M\times \cup_{i=1}^{n-1}\overline{V}_i)\cap \rho(G-\tilde{U}))\\
&=\{y\in M: \exists g\in G-\tilde{U}\text{ with }t(g)=y\text{ and }s(g)\in \cup_{i=1}^{n-1}\overline{V}_i\}.
\end{align*}
Since both $\rho:G\to M\times M$ and the projection $\pi_1:M\times \cup_{i=1}^{n-1}\overline{V}_i\to M$ are closed maps, we have that $C_n$ is closed, and hence $V'_n$ is open.

Since $G_S\subset \tilde{U}$, it follows that $S\cap C_n=\emptyset$, then $S\cap V_n=S\cap V'_n$ and the collection $\{V'_n\}$ still covers $S$. We finally define $V:=\bigcup_{n=1}^\infty V'_n$, it is easy to check that $G_V\subset \tilde{U}$, and the lemma follows.
\end{proof}

However, for a general Lie groupoid, a groupoid neighborhood may fail to contain a full neighborhood: for example, for the Lie groupoid associated with the flow of a vector field, if $S=\{x_0\}$ is a non-degenerate sink of the vector field, then small enough groupoid neighborhoods do not contain any full neighborhoods. 

Using the notion of groupoid neighborhood we can formulate the various versions of the linearization problem:

\begin{definition}
Let $G\toto M$ be a Lie groupoid and let $S\subset M$ be a saturated submanifold. Then we say that:
\begin{enumerate}[(a)]
\item $G$ is {\bf weakly linearizable} at $S$ if there are groupoid neighborhoods $\tilde{U}\toto U$ of $G_S\toto S$ in $G\toto M$ and $\tilde{V}\toto V$ of $G_S\to S$ in the local model $\nu(G_S)\toto\nu(S)$, and an isomorphism of Lie groupoids:
$$(\tilde{U}\toto U)\overset\phi\cong (\tilde{V}\toto V)$$
which is the identity on $G_S$.
\item $G$ is {\bf linearizable} at $S$ if both $\tilde{U}$ and $\tilde{V}$ can be chosen to be full neighborhoods, so that  there is an isomorphism of Lie groupoids
$$(G_U\toto U)\overset\phi\cong (\nu(G_S)_V\toto V),$$
which is the identity on $G_S$.
\item $G$ is called {\bf invariantly linearizable} at $S$ if it is linearizable and both $U$ and $V$ can be taken to be saturated.
\end{enumerate}
\end{definition}

The linearization problem has been intensively studied in the last decade in the case of \emph{proper} Lie groupoids. See, e.g., \cite{cs} and references therein for the most updated account of the linearization problem in the proper case.

\begin{example}
Let $\R\times \Z\toto\R$ be the trivial bundle of Lie groups with fiber $\Z$ and let $G\toto\R$ be the subgroupoid with fibers $G_t=\Z$ for $t\neq 0$ and $G_0=\ast$, and let $S=\{0\}$. It is easy to see that $G\toto\R$ is weakly linearizable at $S$ but it is not linearizable at $S$.
\end{example}

\begin{example}
If $G=M\times_N M\toto M$ is the Lie groupoid arising from a submersion $p:M\to N$ and $S\subset M$ is the preimage of any embedded submanifold, then we will see below that $G\toto M$ is always linearizable around $S$.
However, it is invariantly linearizable if and only if the submersion is locally trivial at the points of $S$.
\end{example}

\begin{example}
A Lie groupoid arising from a proper action of a Lie group is invariantly linearizable around an orbit. This is in fact a way to rephrase the Tube Theorem, see e.g. \cite{dk}.
\end{example}





\subsection{Exponential neighborhoods}


Let $(M,\eta)$ be a Riemannian manifold. Denote by $\mathcal E_M\subset TM$ the domain of the {\bf exponential map},  that is, 
the open set consisting of tangent vectors $v\in TM$ for which the corresponding geodesic $\gamma_v(t)$ is defined for all $0\le t \le 1$. 
Then the exponential map
$$\exp:\mathcal E_M\to M \qquad \exp(v)=\gamma_v(1)$$
is smooth, it is the identity over $M$ (viewed as the zero section), and its differential at points of $M$ has the form:
$$ T_{0_x}(TM)\cong T_xM\times T_xM\to TM,\quad (v,w)\mapsto v+w.$$


Let $S\subset (M,\eta)$ be an embedded submanifold. We identify the abstract normal bundle $\nu(S)$ with the orthogonal bundle $(TS)^\bot$.
An open subset $S\subset U\subset \mathcal E_M\cap \nu(S)$ is called an {\bf admissible neighborhood} if the exponential map is injective and \'etale over $U$, hence an open embedding.  Then we call the image $\exp(U)\subset M$ an {\bf exponential neighborhood} of $S$. Of course, this is the standard way tubular neighborhoods are constructed for $S\subset M$ out of $\eta$. When $S$ consist of a single point this construction yields normal coordinates around the point (see e.g. \cite{lee}).


The existence of admissible opens is well-known:

\begin{lemma}
Any submanifold $S$ of a Riemannian manifold $(M,\eta)$ has an admissible open neighborhood $S\subset U\subset \mathcal E_M\cap \nu(S)$.
\end{lemma}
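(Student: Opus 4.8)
The plan is to produce $U$ in two stages: first arrange that $\exp$ is a local diffeomorphism (\emph{\'etale}) on a neighborhood of the zero section, and then shrink that neighborhood to gain injectivity.

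Write $\exp^\bot:\mathcal E_M\cap\nu(S)\to M$ for the restriction of the exponential to the normal bundle, where $\nu(S)$ is identified with $(TS)^\bot$. First I would verify that $\exp^\bot$ is \'etale near $S$. At a point $0_x$ of the zero section we have a canonical splitting $T_{0_x}(\nu(S))\cong T_xS\oplus\nu_x(S)$ into horizontal and vertical directions, and feeding this into the formula $(v,w)\mapsto v+w$ for $\d\exp$ at points of $M$ recalled above, one finds that $\d_{0_x}\exp^\bot$ is exactly the identification $T_xS\oplus\nu_x(S)\xto{\cong}T_xM$. In particular it is an isomorphism, so the inverse function theorem provides an open $W$ with $S\subset W\subset\mathcal E_M\cap\nu(S)$ on which $\exp^\bot$ is a local diffeomorphism. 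This secures the \'etale half of admissibility, and it remains to find $U\subset W$ on which $\exp^\bot$ is injective.

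For injectivity over a relatively compact piece I would argue by contradiction. Fix $K\subset S$ with compact closure; if no tube $\{v\in W:\pi(v)\in K,\ |v|<\varepsilon\}$ were injective, then for $\varepsilon=1/n$ one could extract $u_n\neq v_n$ in $W$ with $\exp^\bot(u_n)=\exp^\bot(v_n)$ and $|u_n|,|v_n|\to 0$. Passing to a subsequence so that $\pi(u_n)\to x\in\overline K$ gives $u_n\to 0_x$; since $\exp^\bot(v_n)=\exp^\bot(u_n)\to x$ while $|v_n|\to 0$, continuity forces $v_n\to 0_x$ as well. But $\exp^\bot$ is injective on a neighborhood of $0_x$, being a local diffeomorphism there, so $u_n=v_n$ for $n$ large --- a contradiction. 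This already disposes of the case of compact $S$, where a single tube of uniform radius does the job.

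For general $S$ I would globalize by exhausting $S=\bigcup_k S_k$ by relatively compact opens, using the previous step to obtain an injectivity radius over each $\overline{S_k}$, and assembling these into a positive continuous function $\rho\colon S\to(0,\infty)$ with a partition of unity, setting $U=\{v\in W:|v|_{\pi(v)}<\rho(\pi(v))\}$. The genuine difficulty --- and the step I expect to be the main obstacle --- is that injectivity is \emph{not} a local or fiberwise condition: two short normal geodesics issuing from distant base points of $S$ may still meet, so injectivity of each local tube does not imply injectivity of their union. Overcoming this requires shrinking $\rho$ so aggressively that the closed set $\overline\Delta:=\overline{\{(u,v)\in W\times W:u\neq v,\ \exp^\bot(u)=\exp^\bot(v)\}}$ is kept away from $U\times U$; here one uses that $\overline\Delta$ is disjoint from the closed set $S\times S$ --- which follows from the limiting argument of the previous paragraph together with the fact that $\exp^\bot|_S$ is the embedding $S\hookrightarrow M$ --- and then separates these two closed subsets of the metrizable manifold $W\times W$ by a product neighborhood, which is exactly what the variable radius $\rho$ encodes.
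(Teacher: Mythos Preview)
The paper does not prove this lemma at all: it is introduced with ``the existence of admissible opens is well-known'' and no argument is given. Your outline is essentially the standard tubular neighborhood construction, so there is nothing in the paper to compare it against; what follows concerns only the internal soundness of your sketch.

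Stages 1 and 2 are fine. (In stage~2 you should say that $\pi(v_n)\in K$ allows you to pass to a convergent subsequence before concluding $v_n\to 0_x$; as written, ``continuity forces $v_n\to 0_x$'' skips that step.) The point that deserves attention is the end of stage~3. You correctly isolate the difficulty --- local injectivity does not globalize --- and your plan to keep $U\times U$ away from $\overline\Delta$ is the right one. But the phrase ``separates these two closed subsets of the metrizable manifold $W\times W$ by a product neighborhood'' reads as an appeal to normality, and that alone does not suffice: the sets $U_\rho\times U_\rho$ do \emph{not} form a neighborhood basis of $S\times S$ in $W\times W$. For instance, with $S=\R\hookrightarrow M=\R^2$ and $W=\nu(S)\cong\R^2$, the closed set
\[
C=\{(x,y,x',y'):|y|\ge e^{-|x'|}\ \text{or}\ |y'|\ge e^{-|x|}\}
\]
is disjoint from $S\times S$ yet meets every $U_\rho\times U_\rho$. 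What rescues the argument for the particular closed set $\overline\Delta$ is the geometric estimate
\[
d_M\bigl(\pi(u),\pi(v)\bigr)\le |u|+|v|\qquad\text{whenever }\exp^\bot(u)=\exp^\bot(v),
\]
so collisions with small norms have nearby base points. Feeding this into your exhaustion $S=\bigcup_k S_k$, you can choose the radii so that any collision with one foot over $\overline{S_k}$ is forced to have its other foot over $\overline{S_{k+1}}$, whereupon the stage-2 injectivity bound for $\overline{S_{k+1}}$ applies and gives $u=v$. That coupling of base points is the missing ingredient; once you make it explicit, your outline becomes a complete proof.
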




We are interested in the construction of exponential neighborhoods related to a Riemannian submersion $p:(E,\eta^{E})\to(B,\eta^B)$.
Let us denote by $H\subset TE$ the horizontal vector bundle, consisting of vectors orthogonal to the fibers. A curve $\tilde\gamma$ on $E$ is {\bf horizontal} if its tangent vectors $\dot{\tilde{\gamma}}(t)$ belong to $H$. Since $H$ is an example of an Ehresmann connection, one has local lifting of curves, i.e., for any $e\in E$ and any curve $\gamma$ in $B$ with $p(e)=\gamma(0)$ there exists a unique horizontal curve $\tilde\gamma$ such that $\tilde\gamma(0)=e$ and $\gamma(t)=p(\tilde\gamma(t))$.


Since $p:(E,\eta^{E})\to(B,\eta^B)$ is a Riemannian submersion, a horizontal curve is a geodesic if and only if its projection is a geodesic. In particular, if a geodesic on $E$ is normal to a fiber, then it is normal to every fiber it meets.

\begin{proposition}\label{prop:commuting.exponential}
Let $p:(E,\eta^{E})\to(B,\eta^B)$ be a Riemannian submersion. If $S\subset B$ is an embedded submanifold and $\tilde S=p^{-1}(S)$, then for any open subsets  $\tilde S\subset \tilde U\subset \mathcal E_E\cap \nu(\tilde S)$ and $S\subset U\subset\mathcal E_B\cap \nu(S)$ such that $\d p(\tilde U)\subset U$, the following square commutes:
$$\xymatrix{
 \tilde U \ar[r]^{\exp} \ar[d]_{\d p} & E \ar[d]^p\\
 U  \ar[r]^{\exp} & B
}$$
Moreover, if $U$ is admissible then $\tilde U$ is also admissible.
\end{proposition}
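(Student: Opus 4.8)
The plan is to treat the commuting square and the admissibility claim separately, the second resting on the observation that, along the fibres of $\d p$, the exponential map is nothing but the holonomy transport of the horizontal distribution $H$. Before either, I would check that the vertical map $\d p$ really sends $\nu(\tilde S)$ into $\nu(S)$. Since $\tilde S=p^{-1}(S)$, every fibre $p^{-1}(x)$ with $x\in S$ lies inside $\tilde S$, so the vertical space $\ker\d_{\tilde x}p$ is contained in $T_{\tilde x}\tilde S$; taking orthogonal complements, $\nu_{\tilde x}(\tilde S)=(T_{\tilde x}\tilde S)^\perp$ is \emph{horizontal}. Because $\d p$ is a fibrewise isometry on $H$ and $\tilde S$, $S$ have equal codimension, $\d p$ restricts to a linear isomorphism $\nu_{\tilde x}(\tilde S)\xto{\cong}\nu_{p(\tilde x)}(S)$; in particular $\d p(\nu(\tilde S))\subset\nu(S)$, so the hypothesis $\d p(\tilde U)\subset U$ is meaningful.

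For commutativity, fix $\tilde v\in\tilde U$. By the previous step $\tilde v$ is horizontal, so the geodesic $t\mapsto\exp(t\tilde v)$ is normal to the fibre through its starting point and hence, by the submersion property recalled above, stays horizontal and projects under $p$ to a geodesic of $B$ with initial velocity $\d p(\tilde v)$. Evaluating at $t=1$ yields $p(\exp(\tilde v))=\exp(\d p(\tilde v))$, and in passing shows $\d p(\tilde v)\in\mathcal E_B$; this is exactly the commutativity of the square.

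The heart of the matter is the admissibility statement, and here the key remark is that $\exp$ restricted to a fibre of $\d p$ is an Ehresmann transport. Fix $v\in U$, set $b=\exp(v)$, and let $K_v=(\d p)^{-1}(v)\cap\tilde U$. Via base points, $K_v$ is an open piece of the fibre $p^{-1}(x)$, where $x$ is the foot of $v$, and by commutativity $\exp$ maps $K_v$ into $p^{-1}(b)$. Concretely, $\exp(\tilde v_{\tilde x})$ is the time-$1$ endpoint of the horizontal lift of the geodesic $\gamma_v$ starting at $\tilde x$, i.e. the parallel transport of $H$ along $\gamma_v$; since transport along a fixed curve is invertible (run the curve backwards, using uniqueness of horizontal lifts), $\exp|_{K_v}$ is injective and étale. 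Injectivity of $\exp$ on all of $\tilde U$ then follows: if $\exp(\tilde v)=\exp(\tilde v')$, applying $p$ and combining commutativity with injectivity of $\exp$ on the admissible $U$ gives $\d p(\tilde v)=\d p(\tilde v')=v$, so $\tilde v,\tilde v'\in K_v$ and hence $\tilde v=\tilde v'$. For the étale property I would differentiate the square at $\tilde v$: injectivity of $\d\exp$ on $U$ forces $\ker\d_{\tilde v}\exp\subset\ker\d_{\tilde v}(\d p)=T_{\tilde v}K_v$, while $\exp|_{K_v}$ being étale makes $\d\exp$ injective on that subspace; thus $\d_{\tilde v}\exp$ is injective, and an isomorphism by equality of dimensions. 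Combined with injectivity, $\exp|_{\tilde U}$ is an open embedding, so $\tilde U$ is admissible.

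The step I expect to require the most care is identifying $\exp$ along the $\d p$-fibres with the holonomy of $H$ and verifying it is a diffeomorphism onto its image; once this is secured, both the injectivity and the fibre-direction non-degeneracy of $\exp$ come for free, while the base direction is governed entirely by the admissibility of $U$. The preliminary linear claim — that $\nu(\tilde S)$ is horizontal and $\d p$ restricts to an isomorphism onto $\nu(S)$ — is elementary, but it is precisely what glues the base and fibre directions together.
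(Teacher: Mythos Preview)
Your proof is correct and follows essentially the same route as the paper: horizontality of $\nu(\tilde S)$ gives the commuting square, injectivity is reduced via the square to uniqueness of horizontal lifts over a fixed base geodesic, and the \'etale property is obtained by splitting $T_{\tilde v}\tilde U$ into fibre and base directions and using admissibility of $U$ on the latter. The one place you are more explicit than the paper is in identifying $\exp|_{K_v}$ with the holonomy transport of $H$ along $\gamma_v$; the paper packages the same content as a five-lemma step on the short exact sequence $0\to\ker\d(\d p)\to T\tilde U\to TU\to 0$, asserting that the leftmost vertical arrow ``identifies the tangent spaces to the fibres of $p$ and $\d p$'' without spelling out why --- your parallel-transport reading is exactly the missing justification.
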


\begin{proof}
Any vector normal to $\tilde S$ is, in particular, normal to the corresponding fiber. Hence, it gives rise to a horizontal geodesic whose projection is also a geodesic. Therefore, the diagram in the statement of the proposition commutes.

Assume now that $U$ is admissible. To show that $\tilde U$ is also admissible we need to show that $\exp:\tilde U\to E$ is both injective and \'etale:
\begin{itemize}
\item To prove that $\exp:\tilde U\to E$ is injective, lets assume that $\exp(v)=\exp(v')$ and denote by $\gamma$ and $\gamma'$ the geodesics arising from $v$ and $v'$, respectively. Their projections $p(\gamma)$ and $p(\gamma')$ are geodesics in $U$ arising from $\d p(v))$ and $\d p(v')$. Since $\exp$ is injective over $U$ we conclude that $\d p(v)=\d p(v')$. Thus we see that $\gamma$ and $\gamma'$ are two horizontal lifts of the same curve which end at the same point. By the uniqueness of lifting we conclude that they are the same curve. This establishes injectivity.

\item To prove that $\exp:\tilde U\to E$ is \'etale, let $v\in\tilde U$ and set $e=\exp(v)$. The above commutative diagram gives the following map of short exact sequences:
$$\xymatrix{
0 \ar[r] & \ker\d_v (\d p)|_{\tilde U} \ar[r] \ar[d] & T_{v}\tilde U \ar[d]_{\d_v\exp} \ar[r] & T_{\d p(v)}U \ar[d] \ar[r] & 0\\
0 \ar[r] & \ker\d_e p \ar[r] & T_e E \ar[r] & T_{p(e)} B \ar[r] & 0
}$$ 
The last vertical arrow is an isomorphism because $U$ is admissible.
The first vertical arrow identifies the tangent spaces to the fibers of $p$ and $dp$.
It follows that the middle arrow is also an isomorphism, so $\exp:\tilde U\to E$ is a local diffeomorphism, as claimed.
\end{itemize}
\end{proof}


\begin{remark}\label{Ehresmann.Theorem}
The previous proposition establishes the existence of exponential neighborhoods adapted to a Riemannian submersion. When $S=\{y\}$ consists of a single point, it follows that a submersion looks like a projection around a fiber, since in this case the normal bundle $\nu(\tilde S)\cong p^*T_yB\cong p^{-1}(y)\times \R^n$ is trivial. This statement can be thought of as a structure theorem for a submersion. Ehresmann's Theorem, asserting that a proper submersion is locally trivial, can be easily obtained from this statement.
\end{remark}



\subsection{The Linearization Theorem for Riemannian groupoids}

We have now everything in place to state and prove one of our main results.

\begin{theorem}
\label{thm:main:2}
Let $G\toto M$ be a Lie groupoid endowed with a 2-metric $\eta^{(2)}$, and let $S\subset M$ be a saturated embedded submanifold. Then the exponential map defines a weak linearization of $G$ around $S$.
\end{theorem}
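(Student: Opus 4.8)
The plan is to take the linearization to be the exponential map of the induced $1$-metric and to verify directly that it intertwines every structure map. Recall from Proposition \ref{2-metric.induces.1-metric} that $\e2$ induces a $1$-metric $\e1$ on $G=\G1$ and a $0$-metric $\e0$ on $M=\G0$, and that the five maps $\pi_1,m,\pi_2\colon\G2\to G$ and $s,t\colon G\to M$ are all Riemannian submersions. Write $\exp^{(0)},\exp^{(1)},\exp^{(2)}$ for the exponential maps of $\e0,\e1,\e2$. The candidate is $\phi:=\exp^{(1)}$, defined on a neighborhood of the zero section in $\nu(G_S)$; since $\exp^{(1)}$ is the identity along the zero section, $\phi$ automatically restricts to the identity on $G_S$.

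First I would settle compatibility with source and target. Because $S$ is saturated we have $s^{-1}(S)=t^{-1}(S)=G_S$, so $G_S$ is exactly the preimage submanifold to which Proposition \ref{prop:commuting.exponential} applies for the Riemannian submersions $s,t\colon(G,\e1)\to(M,\e0)$. That proposition gives, on a suitable neighborhood, the identities
\[
s\circ\exp^{(1)}=\exp^{(0)}\circ\,\d s,\qquad t\circ\exp^{(1)}=\exp^{(0)}\circ\,\d t,
\]
i.e. $\phi$ intertwines $s,t$ with the source and target $\d s,\d t$ of the local model, and it carries admissible neighborhoods of $S$ in $\nu(S)$ up to admissible neighborhoods of $G_S$ in $\nu(G_S)$.

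The heart of the argument, and the step I expect to be the main obstacle, is multiplicativity, where the full strength of the $2$-metric enters. Here I would exploit that all three of $\pi_1,m,\pi_2$ are Riemannian submersions $\G2\to G$. Saturation again gives $\pi_1^{-1}(G_S)=m^{-1}(G_S)=\pi_2^{-1}(G_S)=G_S^{(2)}$, and by Remark \ref{rmk:core-zero} the normal bundle $\nu(G_S^{(2)})$ is canonically the space of composable pairs $\nu(G_S)^{(2)}$ of the local model. Applying Proposition \ref{prop:commuting.exponential} to each of $\pi_1,\pi_2,m$, for a composable pair $(v_1,v_2)\in\nu(G_S)^{(2)}$ in a suitable neighborhood and $a:=\exp^{(2)}(v_1,v_2)\in\G2$, one gets
\[
\pi_1(a)=\phi(v_1),\qquad \pi_2(a)=\phi(v_2),\qquad m(a)=\phi(\d m(v_1,v_2)).
\]
Since $a=(\pi_1(a),\pi_2(a))$ and $m(a)=\pi_1(a)\cdot\pi_2(a)$, comparing the first two identities with the third yields
\[
\phi(v_1)\cdot\phi(v_2)=\phi(\d m(v_1,v_2)),
\]
so $\phi$ intertwines the multiplication $\d m$ of the local model with that of $G$. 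Units and inverses are then automatic: a diffeomorphism intertwining $s,t,m$ sends idempotents to idempotents and inverse pairs to inverse pairs, hence is a groupoid morphism (alternatively, the units are totally geodesic by Proposition \ref{metric.on.the.units}(i), so $\exp^{(1)}$ commutes with the unit embedding).

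Finally I would upgrade the domain and image to honest subgroupoids. The key observation is that, $\e0$ being a $0$-metric, the normal representation $G_S\action\nu(S)$ is by isometries (Proposition \ref{metric.on.the.units}(iii)); hence each ball bundle $V=\{w\in\nu(S):|w|<\varepsilon\}$ is saturated, and its full restriction $\tilde V:=\nu(G_S)_V=(\d s)^{-1}(V)\cap(\d t)^{-1}(V)$ is a subgroupoid of the local model. Moreover $\d s,\d t\colon\nu(G_S)\to\nu(S)$ are fiberwise isometries — normal vectors to $G_S$ are both $s$- and $t$-horizontal, since the source and target fibers through points of $S$ lie in $G_S$ — so $\tilde V$ is precisely the $\varepsilon$-tube $\{v:|v|<\varepsilon\}$, which is admissible for small $\varepsilon$. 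Choosing $\varepsilon$ small enough that $V$, $\tilde V$ and the induced tube in $\nu(G_S)^{(2)}$ are all admissible, $\phi$ restricts to an open embedding of $\tilde V$ onto an open $\tilde U\subset G$; since $\phi$ is a diffeomorphism intertwining $s,t,m$ and $\tilde V$ is a subgroupoid, $\tilde U\toto U$ (with $U=\exp^{(0)}(V)$) is a subgroupoid of $G$ and $\phi\colon(\tilde V\toto V)\to(\tilde U\toto U)$ is a groupoid isomorphism equal to the identity on $G_S$. This is the desired weak linearization.
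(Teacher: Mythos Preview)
Your argument is essentially the paper's own, and the multiplicativity step — applying Proposition \ref{prop:commuting.exponential} to $\pi_1,\pi_2,m$ together with the identification $\nu(G_S^{(2)})\cong\nu(G_S)^{(2)}$ from Remark \ref{rmk:core-zero} — is exactly what the paper does (phrased there as the claim that the geodesic in $\G2$ through $(\tilde w,\tilde v)$ is $(\gamma_{\tilde w},\gamma_{\tilde v})$). So the strategy and the core of the proof match.

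There is one genuine gap in your final ``upgrade'' paragraph. You take a constant-radius tube $\tilde V=\{v\in\nu(G_S):|v|<\varepsilon\}$ and assert that for $\varepsilon$ small enough it is admissible, in particular contained in the exponential domain $\mathcal E_G$. But $G_S$ (and $G_S^{(2)}$) can be non-compact even when $S$ is compact — the groupoid is not assumed proper — so no uniform $\varepsilon>0$ need exist with $\tilde V\subset\mathcal E_G$; and replacing $\varepsilon$ by an invariant function on $S$ does not help, since the $s$-fibers of $G_S\to S$ may themselves be non-compact. The paper sidesteps this by \emph{not} insisting on a tube: it fixes any admissible $V\subset\nu(S)$ and sets
\[
\tilde V=(\d s)^{-1}(V)\cap(\d t)^{-1}(V)\cap\mathcal E_G\cap\nu(G_S),
\]
then proves directly that $\tilde V$ is closed under multiplication. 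That closure is precisely the content of your $\pi_1,\pi_2,m$ argument: the identity $\exp^{(2)}(v_1,v_2)=(\exp^{(1)}v_1,\exp^{(1)}v_2)$ (your first two displayed equalities) shows $\tilde V\times_V\tilde V\subset\mathcal E_{\G2}$, and the third then gives $\d m(v_1,v_2)\in\mathcal E_G$, hence $\d m(v_1,v_2)\in\tilde V$. So your own computation already fixes the gap once you swap the constant-radius tube for the paper's definition of $\tilde V$.
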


\begin{proof}
Let $S\subset V\subset \nu(S)$ be an admissible neighborhood for $\e0$, and let
$$\tilde V = (\d s)^{-1}(V)\cap (\d t)^{-1}(V) \cap \mathcal E_G \cap \nu(G_S)$$
be the arrows of $\nu(G_S)\toto \nu(S)$, with source and target in $V$, belonging to the domain of the exponential map $\exp^{(1)}$ of the metric $\e1$. 

Let us show that $\tilde V\toto V$ is a groupoid neighborhood of $G_S\toto S$ in $\nu(G_S)\toto\nu(S)$. The compatibility with the structural maps $s,t,i,u$ follows from Proposition \ref{metric.on.the.units}. To see that $\tilde V$ is closed under multiplication, let $({\tilde w},{\tilde v})\in\tilde V\times_V\tilde V\subset \nu(G_S)^{(2)}$, and identify $\nu(G_S)^{(2)}\cong \nu(G_S^{(2)})$ in the canonical way (cf. Remark \ref{rmk:core-zero}).

We claim that the geodesic $\gamma_{(\tilde w,\tilde v)}$ in $\G2$ with initial condition $({\tilde w},{\tilde v})$ is  the curve $(\gamma_{\tilde w}(t),\gamma_{\tilde v}(t))$. In fact, note that $\gamma_{(\tilde w,\tilde v)}(t)$ is perpendicular to the fibers of $\pi_1$ and $\pi_2$ at $t=0$. Since $\pi_1,\pi_2:\G2\to G$ are both Riemannian submersions, $\gamma_{(\tilde w,\tilde v)}(t)$ stays perpendicular to those fibers, and we conclude that $\pi_1(\gamma_{(\tilde w,\tilde v)}(t))$ and $\pi_2(\gamma_{(\tilde w,\tilde v)}(t))$ are both geodesics in $G$ with initial conditions ${\tilde w}$ and ${\tilde v}$. Hence, we must have:
$$\gamma_{(\tilde w,\tilde v)}(t)=(\pi_1(\gamma_{(\tilde w,\tilde v)}(t)),\pi_2(\gamma_{(\tilde w,\tilde v)}(t))=(\gamma_{\tilde w}(t),\gamma_{\tilde v}(t)),$$
as claimed. We have proven that $\tilde V\times_V \tilde V$ is included in the domain of $\exp^{(2)}$.

Now, since $m$ is also Riemannian and $({\tilde w},{\tilde v})$ is perpendicular to an $m$-fiber, it follows also that $m(\gamma_{\tilde w}(t),\gamma_{\tilde v}(t))$ is a geodesic in $G$: it is actually the geodesic with initial condition $\d m({\tilde w},{\tilde v})$. We conclude that $\d m({\tilde w},{\tilde v})$ belongs to the domain of $\exp^{(1)}$, hence in $\tilde V$, proving finally that $\tilde V\toto V$ is a groupoid neighborhood. 

Proposition \ref{prop:commuting.exponential} applied to the Riemannian submersions $s,t,m,\pi_1,\pi_2$
shows that we have a commutative diagram:
$$\xymatrix@R=15pt{
\tilde V\times_V \tilde V \ar[rr]^{\exp^{(2)}}\ar@<0.5pc>[d] \ar[d] \ar@<-0.5pc>[d]& & \G2 \ar@<0.5pc>[d] \ar[d] \ar@<-0.5pc>[d] \\
\tilde V \ar[rr]^{\exp^{(1)}} \ar@<0.25pc>[d] \ar@<-0.25pc>[d] & & G\ar@<0.25pc>[d] \ar@<-0.25pc>[d]\\
V \ar[rr]^{\exp^{(0)}}  & &M}$$
where $V$, $\tilde V$ and $\tilde V\times_V \tilde V$ are all admissible. 
We conclude that the exponential maps of $\e1$ and $\e0$ give the desired weak linearization:
$$(\nu(G_S)\toto\nu(S))\supset(\tilde V\toto V)\overset{\exp}\cong (\exp(\tilde V)\toto \exp(V))\subset (G\toto M).$$
\end{proof}

\begin{remark}
The main step in the proof of Theorem \ref{thm:main:2} was showing the following property of the metric $\e1$: if $\tilde w,\tilde v\in \nu(G_S)$ are such that $\d s(\tilde w)=\d t(\tilde v)$, then 
$(\gamma_{\tilde w}(t),\gamma_{\tilde v}(t))$ belongs to $\G2$ for all $t$ and $m(\gamma_{\tilde w}(t),\gamma_{\tilde v}(t))$ is the geodesic in $G$ with initial condition $\d m(\tilde w,\tilde v)$. If one has a metric on $G$ with this property, then the proof shows that one can linearize $G$ around $S$. Notice that this condition on the metric involves multiplication and does not require \emph{a priori} any metric on $\G2$. One is tempted to call a groupoid with a metric satisfying this property for any invariant submanifold $S$ a \emph{Riemannian groupoid}. However, we don't know of any method to produce such metrics apart from the metrics associated with 2-metrics.
\end{remark}


\medskip

We can easily deduce from Theorem \ref{thm:main:2} the main results on linearization of proper groupoids that one can find in the literature \cite{cs,ppt,wein1,wein2,zung}:

\begin{corollary}[Linearization of proper groupoids]\label{linearization.proper}
If $G\toto M$ is a Hausdorff proper groupoid and $S\subset M$ is a saturated embedded manifold, then $G$ is linearizable around $S$.
\end{corollary}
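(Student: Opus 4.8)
The plan is to assemble three ingredients already available: the existence of a $2$-metric on any Hausdorff proper groupoid (Theorem \ref{thm:main:1}), the weak linearization produced by the associated exponential map (Theorem \ref{thm:main:2}), and the upgrade from an arbitrary groupoid neighborhood to a full one in the proper case (Lemma \ref{lem:full:neighborhood}).

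First I would apply Theorem \ref{thm:main:1} to endow $G \toto M$ with a $2$-metric $\e2$, making it a Riemannian groupoid. Theorem \ref{thm:main:2} applied to $(G \toto M, \e2)$ and the saturated submanifold $S$ then yields a weak linearization, that is, a groupoid isomorphism $\phi\colon (\tilde U \toto U) \cong (\tilde V \toto V)$ restricting to the identity on $G_S$, where $\tilde U \toto U$ is a groupoid neighborhood of $G_S \toto S$ in $G \toto M$ and $\tilde V \toto V$ one in the local model $\nu(G_S) \toto \nu(S)$. Write $\phi_0\colon U \to V$ for the induced diffeomorphism on objects, which is the identity on $S$.

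The heart of the argument is to shrink these neighborhoods to full ones without destroying the isomorphism, and for this I would invoke Lemma \ref{lem:full:neighborhood} on both sides. Since $G$ is proper, $\tilde U \toto U$ contains a full neighborhood $G_{U_a} \toto U_a$ with $S \subset U_a \subset U$ open. The local model $\nu(G_S) \toto \nu(S)$ is again proper, being the action groupoid of the normal representation $G_S \action \nu(S)$ of the proper restriction $G_S$; hence Lemma \ref{lem:full:neighborhood} also applies to $\tilde V \toto V$, giving a full neighborhood $\nu(G_S)_{V_b} \toto V_b$ with $S \subset V_b \subset V$ open. Setting $U_1 := U_a \cap \phi_0^{-1}(V_b)$ (open and containing $S$, since $\phi_0$ fixes $S$) and $V_1 := \phi_0(U_1) \subset V_b$, we get $G_{U_1} \subset G_{U_a} \subset \tilde U$ and $\nu(G_S)_{V_1} \subset \nu(G_S)_{V_b} \subset \tilde V$, so both full neighborhoods lie inside the domain and codomain of $\phi$. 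Since $\phi$ is a bijection covering $\phi_0$, an arrow $g$ satisfies $s(g), t(g) \in U_1$ exactly when $\phi(g)$ has source and target in $V_1 = \phi_0(U_1)$; thus $\phi(G_{U_1}) = \nu(G_S)_{V_1}$ and $\phi$ restricts to a groupoid isomorphism $G_{U_1} \cong \nu(G_S)_{V_1}$, still the identity on $G_S$. This is precisely a linearization of $G$ around $S$.

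I expect the only real obstacle to be this matching of full neighborhoods: under an isomorphism of arbitrary groupoid neighborhoods, the image of a full neighborhood of $G$ need not be full in the model, because the model may carry arrows between points of $\phi_0(U_1)$ that escape $\tilde V$. This is exactly why Lemma \ref{lem:full:neighborhood} must be applied on the model side too, and hence why the short observation that $\nu(G_S)$ is itself proper is essential rather than cosmetic. The remaining verifications are routine intersection-and-restriction bookkeeping.
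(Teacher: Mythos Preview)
Your proposal is correct and follows the same three-ingredient strategy as the paper: existence of a $2$-metric (Theorem \ref{thm:main:1}), weak linearization via the exponential (Theorem \ref{thm:main:2}), and the passage to full neighborhoods via Lemma \ref{lem:full:neighborhood}. The paper's proof is terser---it simply invokes Lemma \ref{lem:full:neighborhood} without spelling out the matching---whereas you correctly make explicit that the lemma must be applied on the model side as well (using that $\nu(G_S)\toto\nu(S)$, being the action groupoid of the proper restriction $G_S$, is itself proper) and then intersect; this is the right way to fill in the bookkeeping the paper leaves implicit.
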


\begin{proof}
By Theorem \ref{thm:main:1}, we can endow our groupoid with a 2-metric. By Theorem \ref{thm:main:2}, we obtain a groupoid neighborhood $\tilde V\toto V$ of $G_S\toto S$ in the local model which can be embedded into $G\toto M$. The proof is completed by observing that in a proper groupoid every groupoid neighborhood contains a full groupoid neighborhood (cf.~Lemma \ref{lem:full:neighborhood}).
\end{proof}

\begin{corollary}[Invariant linearization of $s$-proper groupoids]
If $G\toto M$ is a Hausdorff groupoid whose source map is proper and $S\subset M$ is a saturated embedded manifold, then $G$ is invariantly linearizable around $S$.
\end{corollary}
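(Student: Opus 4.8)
The plan is to \emph{bootstrap} from the linearization already obtained in Corollary~\ref{linearization.proper} and to upgrade its neighborhoods to saturated ones, drawing on two distinct consequences of source-properness. First observe that if $s$ is proper then so is the anchor $\rho=(t,s)$: for compact $K,L\subset M$ one has $\rho^{-1}(K\times L)=t^{-1}(K)\cap s^{-1}(L)$, a closed subset of the compact set $s^{-1}(L)$, hence compact. Thus $G$ is proper, and Corollary~\ref{linearization.proper} yields a full linearization $\phi\colon(G_U\toto U)\cong(\nu(G_S)_{V'}\toto V')$, covering a diffeomorphism $\phi_0\colon U\to V'$ that is the identity on $S$ and on $G_S$. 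Source-properness has a second, stronger consequence: every source fibre $s^{-1}(x)$ is compact, so each orbit $t(s^{-1}(x))$ is compact as well. This compactness of orbits is exactly what will allow us to saturate.

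The key technical ingredient I would isolate is a \textbf{saturation lemma}: for a Hausdorff source-proper groupoid the saturation $\operatorname{Sat}(C)=t(s^{-1}(C))$ of a closed set $C$ is again closed. To prove it, take $y_n=t(g_n)$ with $s(g_n)\in C$ and $y_n\to y$; the $y_n$ lie in a compact set $K$, and since $s(g_n^{-1})=t(g_n)=y_n\in K$, the inverse arrows $g_n^{-1}$ lie in the compact set $s^{-1}(K)$. Passing to a convergent subsequence $g_n^{-1}\to h$ and using continuity of inversion together with Hausdorffness, the arrow $g:=h^{-1}$ satisfies $t(g)=y$ and $s(g)=\lim s(g_n)\in C$, so $y\in\operatorname{Sat}(C)$. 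An immediate corollary is that for any open $U\supseteq S$ the set $W:=M\setminus\operatorname{Sat}(M\setminus U)$ is open, still contains $S$, and is in fact the \emph{largest} saturated open subset of $U$.

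With these in hand the plan is to saturate both sides of $\phi$ compatibly. On the model side I would exploit that the induced $0$-metric $\e0$ is transversely invariant (Proposition~\ref{metric.on.the.units}), so the normal representation acts by isometries; consequently every disk bundle $D_\rho=\{v\in\nu(S):|v|<\rho\}$, for $\rho$ a positive $G$-invariant function on $S$, is \emph{automatically} saturated in the full local model $\nu(G_S)\toto\nu(S)$, since $|\lambda_g v|=|v|$. Using compactness of the orbits one produces such an invariant $\rho$ with $D_\rho\subseteq V'$. I then set $W:=\phi_0^{-1}(D_\rho)\subseteq U$ and shrink it on the base to $W'':=M\setminus\operatorname{Sat}(M\setminus W)$, a genuinely $G$-saturated open neighborhood of $S$ by the lemma. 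The final check is that $\phi_0(W'')$ is saturated in the full local model: for $v\in\phi_0(W'')\subseteq D_\rho$ and any linear arrow $v\mapsto\lambda_g v$ with $g\in G_S$, one has $|\lambda_g v|=|v|<\rho$, so $\lambda_g v\in D_\rho\subseteq V'$ and the arrow already lives in the \emph{restricted} model $\nu(G_S)_{V'}$; transporting it back by $\phi$ gives a $G$-arrow out of $W''$, whose target lies in $W''$ by saturation, whence $\lambda_g v\in\phi_0(W'')$. Therefore $\phi$ restricts to an isomorphism $G_{W''}\cong\nu(G_S)_{\phi_0(W'')}$ of \emph{full, saturated} groupoid neighborhoods, the identity on $G_S$, which is precisely an invariant linearization.

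The step I expect to be the main obstacle is exactly this matching of saturations across $\phi$. Since $\phi$ is only an isomorphism of the \emph{restricted} groupoids $G_U$ and $\nu(G_S)_{V'}$, being saturated in a restricted groupoid does not by itself force saturation in the ambient groupoid: one must rule out the ``escaping'' arrows that would leave the linearization domain. The two inputs from source-properness are tailored to the two sides of this difficulty: the isometry of the normal representation (via disk bundles) kills escaping arrows in the local model, while the saturation lemma (closedness of $\operatorname{Sat}$, resting on $s$-properness and Hausdorffness) kills them on $M$. I would finally remark that, specializing to the case where $S=O$ is a single orbit, this argument reproduces the classical invariant tube and linearization theorems, now with the saturated neighborhoods demanded by the statement.
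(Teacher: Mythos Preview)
Your argument is correct and follows the same skeleton as the paper: $s$-properness implies properness, Corollary~\ref{linearization.proper} gives a full linearization, and then stability of orbits under an $s$-proper groupoid (your ``saturation lemma'') lets one shrink to a saturated neighborhood. The paper's own proof is two sentences: it cites that $s$-proper groupoids are proper with stable orbits (every open around $S$ contains a saturated open, cf.\ \cite[Prop.~5.3.3]{survey}) and declares the rest clear.

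Where you go beyond the paper is in the matching of saturations across $\phi$, which the paper leaves implicit. You handle the model side \emph{constructively}, exploiting the Riemannian structure: since the $0$-metric makes the normal representation act by isometries, disk bundles $D_\rho$ for $G$-invariant $\rho$ are automatically saturated in $\nu(G_S)$. This replaces the abstract appeal to stability in the local model (which would in turn require checking that $\nu(G_S)\toto\nu(S)$ is itself $s$-proper) by an explicit and metrically natural choice. Your final verification that $\phi_0(W'')$ is $\nu(G_S)$-saturated is exactly the step the paper suppresses, and your proof of it is sound: the isometry of the normal representation keeps $\lambda_g v$ inside $D_\rho\subset V'$, so the model arrow lies in the domain of $\phi^{-1}$ and $G$-saturation of $W''$ finishes the job. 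The only point to tidy is the existence of a continuous $G$-invariant $\rho$ with $D_\rho\subset V'$; this follows from compactness of orbits by taking $\rho(x)=\min_{y\in O_x}\epsilon(y)$ for a suitable continuous $\epsilon$, which is standard.
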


\begin{proof}
Every $s$-proper groupoid is proper, and its orbits are stable, namely every neighborhood $U$ of a saturated embedded manifold $S$, contains a saturated neighborhood of $S$ (cf. \cite[Prop~5.3.3]{survey}). The proof now is clear.
\end{proof}

\begin{remark}
Invariant linearization of $s$-proper groupoids covers a large number of related classical results, on fibrations, group actions and foliations. But as explained in \cite{cs}, it does not imply the Tube Theorem for proper actions (cf. \cite[Thm. 2.4.1]{dk}), where invariant linearization holds without requiring s-properness. Theorem \ref{thm:main:2} also does not yield the Tube Theorem, but our strategy of proof should still work: if we use the metric described in Example \ref{ex:proper-actions}, then the admissible neighborhoods can be taken to be invariant. 
Nevertheless, a statement for Lie groupoids generalizing the Tube Theorem is still lacking. The one conjectured in \cite{survey} is that invariant linearization holds for any $s$-locally trivial proper groupoid.
\end{remark}

\medskip

We can also formulate an infinitesimal version of the linearization theorem, which gives a criterion to conclude that a given Lie algebroid does not admit a proper integration.

Given $G\toto M$ a Lie groupoid and $S\subset M$ a saturated embedded submanifold, we can define the {\bf infinitesimal local linear model} as the Lie algebroid of the local linear model, say $\Lie(\nu(G_S)\toto S)=A_{\nu(G_S)}\to S$. We will say that the groupoid is {\bf infinitesimally linearizable} around $S$ if there are opens $S\subset U\subset M$ and $S\subset V\subset \nu(S)$ and a Lie algebroid isomorphism
$$A_G|_U\cong A_{\nu(G_S)|_V}$$

\begin{corollary}
A Riemannian groupoid is infinitesimally linearizable around any saturated submanifold $S$.
\end{corollary}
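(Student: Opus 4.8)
The plan is to deduce this directly from the weak linearization of Theorem \ref{thm:main:2} by passing to infinitesimal data via the Lie functor. First I would invoke Theorem \ref{thm:main:2}, which applies since a Riemannian groupoid is by definition equipped with a 2-metric: it yields groupoid neighborhoods $\tilde U\toto U$ of $G_S\toto S$ in $G\toto M$ and $\tilde V\toto V$ of $G_S\to S$ in the local model $\nu(G_S)\toto\nu(S)$, together with an isomorphism of Lie groupoids $\phi:(\tilde U\toto U)\cong(\tilde V\toto V)$ restricting to the identity on $G_S$. The identity-on-$G_S$ condition is inherited for free but is not actually needed for the infinitesimal statement.

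Next I would differentiate $\phi$. Since the Lie functor $\Lie$ is functorial, an isomorphism of Lie groupoids induces an isomorphism of the associated Lie algebroids, so $\Lie(\phi)$ produces a Lie algebroid isomorphism $A_{\tilde U}\cong A_{\tilde V}$ covering the base diffeomorphism $U\cong V$.

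The only remaining point is to identify these algebroids with restrictions of the ambient ones. Because $\tilde U\toto U$ is an open subgroupoid of $G\toto M$ (in particular $\tilde U$ is open in $G$ and, being a subgroupoid, contains the units $u(U)$), its Lie algebroid depends only on a neighborhood of the units, and there $\tilde U$ agrees with $G$; hence the underlying bundle $\ker\d s|_{u(M)}$, the anchor, and the bracket all coincide over $U$, giving $A_{\tilde U}=A_G|_U$ as Lie algebroids. The same reasoning applied to the local model gives $A_{\tilde V}=A_{\nu(G_S)|_V}$. Combining, we obtain a Lie algebroid isomorphism $A_G|_U\cong A_{\nu(G_S)|_V}$, which is precisely the required infinitesimal linearization around $S$.

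There is essentially no serious obstacle here: the statement is a formal shadow of the weak linearization theorem, read off at the infinitesimal level. The one thing to verify carefully is the standard fact that the Lie algebroid of an open subgroupoid containing the units is the restriction of the ambient Lie algebroid; once this is granted, functoriality of $\Lie$ closes the argument immediately.
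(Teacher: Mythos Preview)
Your proposal is correct and follows essentially the same approach as the paper: invoke the weak linearization from Theorem~\ref{thm:main:2}, then observe that the Lie algebroid of an open subgroupoid $\tilde U\toto U$ coincides with the restriction $A_G|_U$ (the paper phrases this as the inclusion $(\tilde U\toto U)\hookrightarrow(G_U\toto U)$ inducing an isomorphism of Lie algebroids), so that weak and full linearization agree at the infinitesimal level.
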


\begin{proof}
Notice that at the infinitesimal level weak linearization and linearization agree. More precisely, given $\tilde U\toto U$ an open subgroupoid of $G\toto M$, the inclusion
$$(\tilde U \toto U)\to (G_U\toto U)$$
defines an isomorphism between the corresponding Lie algebroids. The result now follows from Theorem \ref{thm:main:2}.
\end{proof}

\begin{example}\label{ex:lie.alg.bundle}
Let $G\toto M$ be a Lie group bundle. In this case the linear local model around a point $x\in M$ can be identified with the product $G_x\times T_xM\toto T_xM$. Hence, if there exists a 2-metric in $G\toto M$ then the underlying bundle of Lie algebras $A_G\to M$ must be locally trivial.
\end{example}


\appendix

\numberwithin{equation}{section}
\section{Some technical background}  
\label{appendix:averaging}    

In this appendix, we recall the concept of quasi-action, with focus on the tangent lift of an action. We then introduce averaging operators for quasi-actions. This is a crucial technique that we use in the paper to construct 2-metrics on proper groupoids.


\subsection{The tangent lift of an action}


Unlike the group case, an action of a Lie groupoid on a manifold does not induce a tangent action on the tangent bundle. We do have natural actions in the normal directions of the underline foliation, the so-called normal representations that we have already discussed. Still, sometimes it is necessary to put them all in a common framework. This can be done with the help of a connection on the groupoid, which allow us to define a quasi-action on the tangent bundle.

\medskip


Let $G\toto M$ be a Lie groupoid, and let $E$ be a manifold. A {\bf quasi-action} $\theta:G\tilde\action E$ with {\bf moment map} $q:E\to M$ consists of a smooth map 
$$\theta:G\times_M E\to E \qquad (g,e)\mapsto \theta_g(e)$$ 
satisfying $q(\theta_g(e))=t(g)$ for all $(g,e)\in G\times_ME=\{(g,e)\in G\times E: s(g)=q(e)\}$.
In other words,  a quasi-action associates to each arrow \smash{$y\xfrom g x$} in $G$ a smooth map $\theta_g:E_x\to E_y$. The quasi-action is called:
\begin{enumerate}[(i)]
\item {\bf unital} if $\theta_{1_x}=\id_{E_x}$ for all $x\in M$;
\item {\bf flat} if $\theta_{g_1}\theta_{g_2}=\theta_{g_1g_2}$ for all $g_1,g_2\in \G2$.
\item {\bf linear} if $q:E\to M$ is a vector bundle and $\theta_g:E_x\to E_y$ is linear for all $g$.
\end{enumerate}
Thus, with these definitions, an \emph{action} is the same as a unital flat quasi-action and a \emph{representation} is the same as a linear action.


An action $\theta:G\action E$ can be lifted to a quasi-action of the action groupoid $G\ltimes E$ over the tangent bundle $TE$ with the help of a connection on the groupoid.
By a {\bf connection} $\sigma$ on the Lie groupoid $G\toto M$ we mean a vector bundle map $\sigma:s^*TM\to TG$ such that $\d s\cdot \sigma=\id_{s^*TM}$ and $\sigma|_M=\d u$. Hence, a connection yields a splitting for the following sequence of vector bundles over $G$:
$$\xymatrix{\ 0\ \ar[r] &\ t^* A \ \ar[r] &\  TG \ \ar[r]^{s_*} & \ s^*TM \ \ar@/^/[l]^{\sigma} \ar[r]& \ 0\ }.$$
A connection $\sigma$ is {\bf multiplicative} if its image is a subgroupoid of $TG\toto TM$. Using a partition of the unity, one can show that every Lie groupoid admits a connection (see e.g. \cite{ac}), however a groupoid may not have a multiplicative connection. For instance, a multiplicative connection for the pair groupoid $M\times M\toto M$ is the same thing as a trivialization of the tangent bundle $TM\to M$ which, of course, does not exists in general.

\begin{definition}
Given $\theta:G\action E$ an action and $\sigma$ a connection on $G$, the {\bf tangent lift} of $\theta$ is the quasi-action $T_\sigma\theta:(G\ltimes E)\tilde\action TE$ which has moment map the projection $TE\to E$ and is defined by
$$T_\sigma\theta:G\times_ M TE \to TE \qquad
(T_\sigma\theta)_{(g,e)}(v)=\d \theta(\sigma_g(\d_e q(v)),v).$$
By transposition, we define the {\bf cotangent lift} $T_\sigma\theta:(G\ltimes E)\tilde\action T^*E$:
$$T_\sigma^*\theta:G\times_M T^*E\to T^*E \qquad
\<(T_\sigma^*\theta)_{g,e}(\alpha),v\> = \< \alpha, (T_\sigma \theta)_{(g^{-1},ge)}(v) \>.$$
\end{definition}

We will often denote $(T_\sigma\theta)_{(g,e)}(v)$ just by $gv$ and similar for the cotangent lift. With these notations we have $\<g\alpha,v\>=\<\alpha,g^{-1}v\>$.
The tangent lift $T_\sigma\theta$ and the cotangent lift $T_\sigma^*\theta$ are both unital, but rarely flat. In fact, the tangent and cotangent lift are flat if and only if the connection is multiplicative. As we saw above in the example of the pair groupoid, this may be a quite restrictive condition, and that is why we need to consider quasi-actions to work with general groupoids.

\begin{example}
When $G\toto M$ is an \'etale groupoid the map $s_*$ is an isomorphism and there exists a unique connection, namely $\sigma=s_*^{-1}$. Moreover, this connection is multiplicative. Therefore, when working with \'etale groupoids (and orbifolds) the tangent and cotangent lift are canonically defined, and they are actual actions, which greatly simplifies the whole theory.
\end{example}

\medskip

Although the tangent and cotangent lift depend on the choice of a connection, their action along the directions transversal to the orbits is intrinsic:

\begin{proposition}\label{tan.lift.nor.representation}
Let $G\toto M$ be a Lie groupoid, $\sigma$ a connection, $\theta:G\action E$ an action and $O\subset E$ an orbit. Then $TO^\circ\subset T^*E$ is invariant for the cotangent quasi-action $T^*_\sigma\theta$, and the restriction $(T^*_\sigma\theta)|_{TO^\circ}$ agrees with the conormal representation of the action groupoid. Hence, it is an action which does not depend on $\sigma$.
\end{proposition}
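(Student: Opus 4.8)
The plan is to reduce everything to a single statement about the \emph{tangent} lift, namely that $T_\sigma\theta$ preserves the subbundle $TO\subset TE|_O$, and then obtain the cotangent assertions by transposition. First I would prove the tangent-side claim: for any arrow $(g,e)$ of the action groupoid, writing $ge=\theta_g(e)$, the map $(T_\sigma\theta)_{(g,e)}:T_eE\to T_{ge}E$ sends $T_eO$ into $T_{ge}O$. To see this, take $v\in T_eO$ and write $v=\dot c(0)$ for a curve $c$ lying entirely in $O$ with $c(0)=e$. Choose a curve $h$ in $G$ with $h(0)=g$, $s(h(\tau))=q(c(\tau))$ and $\dot h(0)=\sigma_g(\d_e q(v))$; then, by the definition of the tangent lift, $(T_\sigma\theta)_{(g,e)}(v)=\d\theta(\dot h(0),v)$ is the velocity at $0$ of the curve $\tau\mapsto\theta_{h(\tau)}(c(\tau))$. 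Since each $\theta_{h(\tau)}$ maps $O$ to $O$ and $c(\tau)\in O$, this curve lies in $O$, so its velocity is tangent to $O$ at $ge$.

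Once this is established, I would record two consequences. Taking $c\equiv e$ and $v=0$ above, the same computation shows that $\d\theta(\xi,0)\in T_{ge}O$ for every $\xi\in\ker(\d_g s)$; hence the value $(T_\sigma\theta)_{(g,e)}(v)\bmod T_{ge}O$ is unchanged if the lift $\sigma_g(\d q(v))$ is replaced by any other vector projecting to $\d q(v)$ under $\d s$. This yields a well-defined induced map on the quotient $\nu(O)=TE|_O/TO$ which is independent of the connection $\sigma$; comparing with the geometric description of the normal representation of $G\ltimes E$ recalled in Section~\ref{sub:inv.metrics} (a curve $c$ representing $v$, a curve $h$ with $s\circ h=q\circ c$, output the class of $\frac{d}{d\tau}\big|_0 \theta_{h(\tau)}(c(\tau))$) shows that this induced map is exactly $\lambda_{(g,e)}$.

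For the cotangent statement I would dualize. By definition $(T^*_\sigma\theta)_{(g,e)}$ is the transpose of $(T_\sigma\theta)_{(g^{-1},ge)}:T_{ge}E\to T_eE$, and the arrow $(g^{-1},ge)$ runs from $ge$ back to $\theta_{g^{-1}}(ge)=e$, so by the tangent-side claim this map sends $T_{ge}O$ into $T_eO$. Therefore, for $\alpha\in (T_eO)^\circ$ and $v\in T_{ge}O$ we get $\<(T^*_\sigma\theta)_{(g,e)}(\alpha),v\>=\<\alpha,(T_\sigma\theta)_{(g^{-1},ge)}(v)\>=0$, proving that $TO^\circ$ is invariant. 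Under the identification $\nu^*(O)\cong TO^\circ$, the functional associated to $\alpha$ is $[v]\mapsto\<\alpha,v\>$, and the same pairing computation, using that $\lambda_{(g^{-1},ge)}[v]=[(T_\sigma\theta)_{(g^{-1},ge)}(v)]$, identifies $(T^*_\sigma\theta)_{(g,e)}|_{TO^\circ}$ with $\phi\mapsto\phi\circ\lambda_{(g,e)^{-1}}$, i.e.\ with the conormal representation. Since the conormal representation is a genuine (flat) representation of $(G\ltimes E)_O$ defined without any reference to $\sigma$, the final assertion follows.

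The main obstacle I anticipate is the first step --- that the tangent lift preserves $TO$ --- precisely because $T_\sigma\theta$ is only a quasi-action and not flat, so one cannot argue by functoriality and must instead work arrow-by-arrow with explicit curves. The crucial point making this go through is that moving in the $\ker(\d s)$ directions (exactly the directions that are affected by changing $\sigma$) produces vectors tangent to the orbit; this single observation simultaneously forces connection-independence and compatibility with $TO$, and everything else is bookkeeping with the defining pairing of the cotangent lift.
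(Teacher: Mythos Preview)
Your proof is correct. You work on the tangent side first---using explicit curves to show that $(T_\sigma\theta)_{(g,e)}$ preserves $TO$ and that the induced map on $\nu(O)$ is independent of $\sigma$ and coincides with the normal representation of $G\ltimes E$---and then dualize to obtain the cotangent statement. The paper takes a different, more compressed route: it works directly on the cotangent side, reinterpreting the choice of $\sigma$ as a choice of retraction for the injection $\d_{(g,e)}s^*:T_e^*E\hookrightarrow T_{(g,e)}^*(G\times_M E)$, and then observes that on the image of $\d s^*$ the retraction is forced, so on $T\tilde O^\circ\subset \im(\d s^*)$ (where $\tilde O$ is the orbit in the action groupoid) there is no $\sigma$-dependence at all. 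Your approach has the virtue of making the identification with the normal/conormal representation completely transparent via the curve description from Section~\ref{sub:inv.metrics}; the paper's approach is shorter but leaves that identification implicit. One small wording issue: when you write ``taking $c\equiv e$ and $v=0$ above'' to get $\d\theta(\xi,0)\in T_{ge}O$ for arbitrary $\xi\in\ker\d_g s$, you are not literally specializing the earlier argument (which would force $\dot h(0)=\sigma_g(0)=0$) but rather repeating the orbit-curve reasoning with $h$ now an arbitrary curve in the $s$-fiber; the argument is fine, just phrase it as a parallel computation rather than a specialization.
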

\begin{proof}
The connection $\sigma$ consists in choosing for each $(g,e)\in G\times_M E$ a retraction for the linear map $d_{(g,e)}s^*:T_e^*E\to T_{(g,e)}^*(G\times_M E)$ in a smooth way. The value of such a retraction over the image $d_{(g,e)}s^*(T_e^*E)$ is totally settled. Writing $\tilde O=(G\times_M E)_O$, the result follows by noting that $T\tilde O^\circ\subset d_{(g,e)}s^*(T_e^*E)$.
\end{proof}

\medskip

We end this subsection by stating the following naturality properties of the tagent lift, whose proof are straightforward.

\begin{proposition}\label{naturality.tangent.lift}
Let $G\toto M$ be a Lie groupoid and fix a connection $\sigma$ on it. Then:
\begin{enumerate}[(i)]
\item If $\theta^E:G\action E$ and $\theta^F:G\action F$ are two groupoid actions with moment maps $q^E, q^F$, respectively, then for any equivariant map $p:E\to F$ the differential $\d p:TE\to TF$ is also equivariant for the tangent lifts $T_\sigma\theta^E$ and $T_\sigma\theta^F$;
\item If $\theta^1:G\action E$ and $\theta^2:G\action E$ are two commuting actions with moment maps $q_1,q_2:E\to M$, then the tangent lifts $T_\sigma\theta^1$ and $T_\sigma\theta^2$ also commute.
\end{enumerate}
\end{proposition}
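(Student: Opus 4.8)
The plan is to treat both items by the same device: differentiate the identity that expresses the relevant compatibility of the underlying \emph{maps}, and then evaluate the resulting map of tangent bundles on the particular tangent vector built from a horizontal lift, so that the defining formula $(T_\sigma\theta)_{(g,e)}(v)=\d\theta(\sigma_g(\d_e q(v)),v)$ reappears on the nose. The only thing to watch is that, after differentiating, the $G$-component of the tangent vector is still the horizontal lift of the correct base vector; this is exactly where the compatibility hypotheses enter.

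For (i), I would start from the equivariance of $p$, read as the commuting square $p\circ\theta^E=\theta^F\circ(\id_G\times p)$ of maps out of $G\times_M E$, together with the moment-map compatibility $q^F\circ p=q^E$. Differentiating the square gives $\d p\circ\d\theta^E=\d\theta^F\circ\d(\id_G\times p)$, and I evaluate both sides at the vector $(\sigma_g(\d_e q^E(v)),v)\in T_{(g,e)}(G\times_M E)$, for $v\in T_eE$. The left-hand side is visibly $\d p(g\cdot v)$. On the right, $\d(\id_G\times p)$ sends this vector to $(\sigma_g(\d_e q^E(v)),\d p(v))$, and since $q^F\circ p=q^E$ forces $\d_e q^E(v)=\d_{p(e)}q^F(\d p(v))$, the first slot is precisely the horizontal lift $\sigma_g(\d_{p(e)}q^F(\d p(v)))$. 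Hence $\d\theta^F$ of this vector is $g\cdot\d p(v)$, and (i) follows.

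For (ii), I would package the two commuting actions into a single map $\Psi(g,h,e)=\theta^1_g\theta^2_h(e)$ defined on the triple fibre product $\{(g,h,e):s(g)=q_1(e),\ s(h)=q_2(e)\}$; here I use that for commuting actions each moment map is invariant under the other, so $q_1(\theta^2_h(e))=q_1(e)$ guarantees $\Psi$ is well defined, while commutativity supplies the alternative expression $\Psi(g,h,e)=\theta^2_h\theta^1_g(e)$. The point is that $\Psi$ is one honest map, so its differential at a fixed tangent vector is one honest vector, computed in either order. Evaluating $\d\Psi$ on $(\sigma_g(\d q_1(v)),\sigma_h(\d q_2(v)),v)$ and writing $\Psi=\theta^1\circ F_2$ with $F_2(g,h,e)=(g,\theta^2_h(e))$ reproduces $(T_\sigma\theta^1)_{(g,\theta^2_h e)}\big((T_\sigma\theta^2)_{(h,e)}(v)\big)$, whereas writing $\Psi=\theta^2\circ F_1$ with $F_1(g,h,e)=(h,\theta^1_g(e))$ reproduces the opposite composite. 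Equating the two expressions yields that the tangent lifts commute.

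The only genuine content beyond bookkeeping lies in the two places where the horizontal-lift slot must be recognized as correct after differentiation: in (i) the identity $\d q^E=\d q^F\circ\d p$, and in (ii) the fact that $\d q_1$ is unchanged along $\theta^2$, so that $\d q_1$ of the intermediate vector $(T_\sigma\theta^2)_{(h,e)}(v)$ still equals $\d q_1(v)$ and matches the lift $\sigma_g(\d q_1(v))$. Both are immediate consequences of differentiating the moment-map compatibilities, so I expect no real obstacle, only the need to keep the fibre-product tangent vectors organized; this is why the proofs are indeed straightforward.
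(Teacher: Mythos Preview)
Your proposal is correct and is exactly the natural unwinding of the definitions; the paper itself gives no proof at all, stating only that the verifications ``are straightforward,'' so there is nothing to compare against. Your care in checking that the horizontal-lift slot survives differentiation---via $q^F\circ p=q^E$ in (i) and via the invariance $q_1\circ\theta^2=q_1$ in (ii)---is precisely the content the paper elides, and both checks go through as you describe.
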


%




\subsection{Haar systems and averaging methods}


Haar systems on Lie groupoids generalize Haar systems on Lie groups, they always exist for proper groupoids, and allow some averaging arguments on functions and sections of equivariant vector bundles. We show that this can even be extended so as to include vector bundles endowed with quasi-actions, and apply in this way averaging arguments to metrics.

\medskip



Recall that a \emph{smooth density} on a vector bundle $E\to M$ of rank $r$ is a nowhere vanishing smooth section $\mu$ of the trivial line bundle $(\wedge^r E)\otimes (\wedge^r E)$. For instance, when $E$ is orientable, any volume form in $E$, i.e., a nowhere vanishing section $\omega$ of $\wedge^r E$, determines a density $\omega\otimes\omega$. 

Let $G\toto M$ be a Lie groupoid with associated algebroid $A\to M$. Given a smooth density $\mu$ on the underlying vector bundle, we denote by $\mu^x$ the pullback density on $G(-,x)=s^{-1}(x)$ through the target map:
$$\begin{matrix}
\xymatrix{TG(-,x) \ar[d] \ar[r]^(.6){\phi} & A \ar[d] \\ G(-,x) \ar[r]^(.6)t & M}
  \end{matrix}
\qquad \phi(g,v)=(t(g), \d R_{g^{-1}}(v)).
$$
The family of densities $\{\mu^x\}_{x\in M}$ satisfies the following two properties:
\begin{enumerate}[(i)]
 \item (Smoothness) The function 
 $$ x\mapsto \int_{G(-,x)}f(g)\mu^x(g)$$ 
 is smooth for all $f\in C^\infty(G)$.
 \item (Right-invariance) For any arrow \smash{$y\xfrom h x$} and  $f\in C^\infty(G(-,x))$ one has:
 $$\int_{G(-,y)} f(gh)\mu^y(g)= \int_{G(-,x)}f(g)\mu^x(g).$$
In other words, we have $\mu^y = R_h^*(\mu^x)$, where $R_h:G(-,y)\to G(-,x)$ denotes right multiplication. 
\end{enumerate}

\begin{definition}
We say that $\mu$ is a {\bf normalized Haar density} if the family $\{\mu^x\}_{x\in M}$ also satisfies the following property:
\begin{enumerate}[(i)]
\setcounter{enumi}{2}
\item (Normalization) The support ${\rm supp}(\mu^x)$ is compact and  for all $x\in M$:
$$\int_{G(-,x)}\mu^x(g)=1.$$
\end{enumerate}
 \end{definition}


We have the following important fact:

\begin{proposition}
A proper groupoid $G\toto M$ admits a normalized Haar density. 
\end{proposition}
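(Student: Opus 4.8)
The plan is to obtain a normalized Haar density from an arbitrary one by multiplying by a cutoff function pulled back from $M$. First I would fix any nowhere-vanishing smooth density $\mu_0$ on the algebroid $A\to M$, which exists by a partition-of-unity argument, giving the right-invariant smooth family $\{\mu_0^x\}$ described above. The key observation is that $t\circ R_h=t$, so for any $c\in C^\infty(M)$ the function $g\mapsto c(t(g))$ is constant along the fibres of the right-multiplication action; hence setting $\mu:=c\,\mu_0$ produces the family $\mu^x(g)=c(t(g))\,\mu_0^x(g)$, which is still smooth and right-invariant for \emph{every} choice of $c$. The whole problem is thereby reduced to producing a single $c\geq 0$ that makes the fibre integrals finite and compactly supported and of total mass $1$.

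Second, I would construct a suitable cutoff function, and this is where properness enters. Concretely, I want $f\in C^\infty(M)$ with $f\geq 0$, $f>0$ everywhere, and such that the restriction of $s$ to $t^{-1}(\supp f)$ is proper, i.e. $(t,s)^{-1}(\supp f\times K)$ is compact for every compact $K\subset M$. Since $G$ is proper, the orbit space $M/G$ is Hausdorff, locally compact and second countable, so I can choose a locally finite cover $\{V_i\}$ of $M$ by relatively compact opens whose images $\{q(V_i)\}$ under the quotient $q:M\to M/G$ are locally finite in $M/G$, together with bump functions $\chi_i\geq 0$ supported in $V_i$ with $\sum_i\chi_i>0$; then I set $f=\sum_i\chi_i$. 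Properness of the anchor $(t,s)$ makes each $(t,s)^{-1}(\supp\chi_i\times K)$ compact, and only finitely many $\supp\chi_i$ meet the saturation $t(s^{-1}(K))$, since $q(K)$ is compact and $\{q(V_i)\}$ is locally finite; hence $(t,s)^{-1}(\supp f\times K)$ is a finite union of compacts, and so compact. Taking $K=\{x\}$ already shows that the integrals below are over compact subsets of $G(-,x)$.

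Third, I would normalize. Set
\[ I(x):=\int_{G(-,x)} f(t(g))\,\mu_0^x(g), \]
which is finite by the previous step, smooth by property (i), and strictly positive because $f(t(1_x))=f(x)>0$. Applying the right-invariance property (ii) to the function $g\mapsto f(t(g))$ and using $t\circ R_h=t$, one checks that $I(t(h))=I(s(h))$ for every arrow $h$, i.e. $I$ is constant along orbits. Consequently $c:=f/I$ is a well-defined smooth nonnegative function with $\supp c=\supp f$, and since $I(t(g))=I(x)$ for $g\in G(-,x)$,
\[ \int_{G(-,x)} c(t(g))\,\mu_0^x(g)=\frac{1}{I(x)}\int_{G(-,x)} f(t(g))\,\mu_0^x(g)=1. \]
Defining $\mu:=c\,\mu_0$, the associated family $\mu^x(g)=c(t(g))\,\mu_0^x(g)$ is smooth, right-invariant, compactly supported on each $G(-,x)$, and of total mass $1$, which is the required normalized Haar density.

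Finally, the main obstacle is precisely the construction of the cutoff function $f$: the averaging identity giving orbit-invariance of $I$, the normalization, and the passage $\mu=c\,\mu_0$ are all formal, whereas the existence of $f$ with $s|_{t^{-1}(\supp f)}$ proper genuinely rests on properness, through the good topological properties of $M/G$ and the compatibility of a cover of $M$ with a locally finite cover of the orbit space.
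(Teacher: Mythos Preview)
Your overall strategy matches exactly the sketch the paper gives (the paper does not prove this proposition but refers to \cite{crainic,tu} and outlines precisely the approach you describe: multiply an arbitrary density by a cutoff function pulled back along $t$, then normalize). The formal parts of your argument --- right-invariance of $c(t(g))\mu_0^x$, orbit-invariance of $I$, and the normalization $c=f/I$ --- are all correct.

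There is, however, a genuine gap in your construction of the cutoff function. You require $f>0$ everywhere on $M$; this forces $\supp f=M$, so your properness condition on $s|_{t^{-1}(\supp f)}$ becomes properness of $s:G\to M$ itself, i.e.\ $s$-properness, which is strictly stronger than properness. For the pair groupoid $M\times M\toto M$ with $M$ non-compact, no such $f$ exists. The same example kills your claimed cover: any locally finite cover of $M$ by relatively compact opens is infinite, while $M/G$ is a single point, so the images cannot be locally finite in $M/G$. The fix is small: replace ``$f>0$ everywhere'' by ``every orbit meets $\{f>0\}$'', which is all you need for $I(x)>0$ since you already know $I$ is orbit-invariant. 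To build such an $f$, start from a locally finite cover $\{W_j\}$ of $M/G$ by relatively compact opens, pick a partition of unity $\{\phi_j\}$ subordinate to it, choose compact sets $K_j\subset M$ with $q(K_j)\supset\supp\phi_j$ and bump functions $\psi_j$ equal to $1$ on $K_j$ with compact support, and set $f=\sum_j(\phi_j\circ q)\,\psi_j$. The sum is locally finite on $M$ because compact subsets of $M$ have compact image in $M/G$; the support condition and the positivity on each orbit follow directly. With this $f$, the rest of your proof goes through unchanged.
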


For a proof we refer to \cite{crainic,tu}. The basic idea is that one can construct such a density $\mu$ as the product $c\tilde\mu$ of a nowhere vanishing density $\tilde\mu$ and a cut-off function $c$. Here by a {\bf cut-off} function $c:M\to\R$ we mean a function whose support intersects the saturation of any compact set in a compact set, or equivalently, such that $s:\supp(c\circ t)\to\R$ is proper, plus the normalization condition:
$$\int_{G(-,x)}c(t(g))\mu^x(g)=1.$$


Now let $G\toto M$ be a Lie groupoid and let $\theta:G\action E$ be an action with moment map $q:E\to M$. We say that a function $f\in C^\infty(E)$ is {\bf $\theta$-invariant} if it is constant along the orbits, namely $f(\theta_g e)=f(e)$ for all $g, e$ for which the action is defined. A normalized Haar density allow us to construct for any $f\in C^\infty(E)=\Gamma(E,\R_E)$ a $\theta$-invariant function $I_\theta(f)$ by averaging over the orbits.

In the same fashion it is possible to average sections of more general vector bundles $\Gamma(E,V)$. More precisely, let $V\to E$ be a vector bundle, let $\theta^E:G\action E$ be an action and let $\theta^V:(G\ltimes E)\tilde\action V$ be a linear quasi-action. The main examples to keep in mind are the tangent and cotangent lifts of an action.
Writing $ge=\theta^E_g(e)$ and $gv=\theta^V_{(g,e)}(v)$, we say that a section $f\in\Gamma(E,V)$ is {\bf $\theta$-invariant} if  $f(g e)=g f(e)$ for all $g,e$ for which the action is defined.

\begin{definition}\label{def:aver.oper}
Given $G\toto M$ a Lie groupoid with normalized density $\mu$, $\theta^E:G\action E$ an action and $\theta^V:(G\ltimes E)\action V$ a linear quasi-action, the associated {\bf averaging operator} is defined by
$$I_\theta:\Gamma(E,V)\to \Gamma(E,V) \qquad 
I_\theta(f)(e):=\int_{G(-,x)} g^{-1}f(g(e))\mu^{x}(g) \qquad x=q(e).$$
\end{definition}


Note that $I_\theta(f)(e)$ only depends on the restriction of $f$ to the orbit of $e$.
The main properties of this averaging operator are summarized in the following proposition. The proof is straightforward.

\begin{proposition}\label{prop:aver.oper}
With the above notations, the following hold: 
\begin{enumerate}[(i)]
 \item If $\theta^V$ is flat then $I_\theta(f)$ is $\theta$-invariant for any $f$. 
 \item If $f$ is already $\theta$-invariant then $I_\theta(f)=f$.
 \item If $\theta^1,\theta^2:G\action E$ are two commuting actions then $I_{\theta^1}I_{\theta^2}(f)=I_{\theta^2}I_{\theta^1}(f)$.
 \item For any equivariant map $\phi:V_1\to V_2$ over vector bundles endowed with linear quasi-actions of $G\ltimes E$, the averaging operators commute with $\phi$.
\end{enumerate}
\end{proposition}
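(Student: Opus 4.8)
The plan is to verify each of the four items by a direct computation with the integral defining $I_\theta$, using in turn the three defining properties of the normalized Haar density (smoothness, right-invariance, normalization) together with the linearity and the flatness of the quasi-action on $V$. Throughout, the guiding principle is that for fixed $e$ the assignment $g\mapsto g^{-1}f(ge)$ is a $V_e$-valued function on the source fiber $G(-,x)$, with $x=q(e)$, and that the quasi-action is \emph{linear}, so it commutes with integration against $\mu^x$.

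I would dispatch the two formal items first. For item (ii), if $f$ is $\theta$-invariant then $f(ge)=g\,f(e)$, whence $g^{-1}f(ge)=f(e)$ is constant in $g$; the normalization $\int_{G(-,x)}\mu^x=1$ then gives $I_\theta(f)(e)=f(e)$. For item (iv), equivariance of $\phi$ means $\phi(g^{-1}w)=g^{-1}\phi(w)$ for the two quasi-actions, so $\phi$ may be moved inside the integrand of $I_\theta(f)(e)$ and then, being fiberwise linear, pulled back out of the integral, yielding $\phi\circ I_\theta=I_\theta\circ\phi$.

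The substantive computation is item (i). Fix an arrow $h$ with $s(h)=q(e)=x$ and $t(h)=y$, and write out
$$I_\theta(f)(he)=\int_{G(-,y)} g^{-1}f(g(he))\,\mu^y(g).$$
I would substitute $g=g'h^{-1}$ with $g'\in G(-,x)$; by right-invariance $\mu^y=R_h^*\mu^x$ this reindexes the integral over $G(-,x)$, while flatness of the action $\theta^E$ gives $g(he)=g'e$. The crucial point is to use flatness of the quasi-action $\theta^V$ to decompose $\theta^V_{(hg'^{-1},\,g'e)}=\theta^V_{(h,e)}\circ\theta^V_{(g'^{-1},\,g'e)}$ (reading $g^{-1}=hg'^{-1}$ as the composite $(h,e)\circ(g'^{-1},g'e)$ in $G\ltimes E$), so that $g^{-1}f(g(he))=h\cdot\big(g'^{-1}f(g'e)\big)$. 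Since $\theta^V_{(h,e)}$ is linear it comes out of the integral, giving $I_\theta(f)(he)=h\cdot I_\theta(f)(e)$, i.e. invariance. This reindex-then-factor step, where the right-invariance of $\mu$ and the flatness hypothesis genuinely interact, is the one I expect to be the main obstacle to get cleanly.

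Finally, for item (iii) I would expand $I_{\theta^1}I_{\theta^2}(f)(e)$ as an iterated integral over $G(-,q_1(e))\times G(-,q_2(g_1e))$, pull the linear operator $g_1^{-1}$ inside the inner integral, and obtain an integrand of the form $g_1^{-1}g_2^{-1}f(g_2g_1e)$. Commutativity of the two actions on $E$ gives $g_2g_1e=g_1g_2e$, and commutativity of their tangent/cotangent lifts on $V$ (Proposition \ref{naturality.tangent.lift}(ii)) gives $g_1^{-1}g_2^{-1}=g_2^{-1}g_1^{-1}$ on the relevant fibers; Fubini, legitimate since the densities have compact support and the integrand is smooth, then swaps the order of integration to match the expansion of $I_{\theta^2}I_{\theta^1}(f)(e)$. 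No single step here is hard; the only care needed is the bookkeeping of moment maps so that the iterated domains are the correct source fibers.
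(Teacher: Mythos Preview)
Your proposal is correct; the paper itself gives no proof at all, simply declaring the result ``straightforward,'' so your explicit verifications---right-invariance of $\mu$ for the reindexing in (i), normalization for (ii), Fubini plus commutativity of the lifts for (iii), and linearity plus equivariance for (iv)---are precisely the routine computations one would supply. One small caveat: your argument for (ii) silently uses $g^{-1}g=\id$ on $V_e$, which needs the quasi-action to be flat and unital; that hypothesis is absent from the stated item but holds in every instance where the paper actually invokes (ii).
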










\end{document}